\newcommand{\Z}{{\mathbb Z}}
\newcommand{\R}{{\mathbb R}}
\newcommand{\C}{{\mathbb C}}
\newcommand{\T}{{\mathbb T}}
\newtheorem{lemma}{Lemma}[section]
\newtheorem{theorem}[lemma]{Theorem}
\newtheorem{proposition}[lemma]{Proposition}
\newtheorem{definition}[lemma]{Definition}
\newcommand{\be}{\begin{equation}}
\newcommand{\ee}{\end{equation}}
\newcommand{\ol}{\overline}
\newcommand{\ti}{\tilde}
\newcommand{\spr}[2]{\left\langle #1 , #2 \right\rangle}
\newcommand{\E}{\mathrm{e}}
\newcommand{\I}{\mathrm{i}}
\newcommand{\tr}{\mathrm{tr}}
\DeclareMathOperator{\dist}{dist}
\newcommand{\eps}{\varepsilon}
\numberwithin{equation}{section}
\begin{document}

\title[AC Spectrum for quasi-periodic operators]{Absolutely continuous spectrum for quasi-periodic Schr\"odinger operators}

\author[H.\ Kr\"uger]{Helge Kr\"uger}
\address{Mathematics 253-37, Caltech, Pasadena, CA 91125}
\email{\href{helge@caltech.edu}{helge@caltech.edu}}
\urladdr{\href{http://www.its.caltech.edu/~helge/}{http://www.its.caltech.edu/~helge/}}

\thanks{H.\ K.\ was supported by the Simons Foundation.}

\date{\today}

\keywords{absolutely continuous spectrum, quasi-periodic
 Schr\"odinger operators}

\begin{abstract}
 I prove that quasi-periodic Schr\"odinger operators in arbitrary dimension
 have some absolutely continuous spectrum. 
\end{abstract}

\maketitle

%
%
%

\section{Introduction}

Let $d\geq 1$ and consider the family of Schr\"odinger operators
$H_{\lambda,\alpha,x}=\Delta +\lambda V_{\alpha,x}$ acting on
$\ell^2(\Z^d)$ where $\lambda>0$ is a coupling constant,
\be
 \Delta\psi(n)=\sum_{|e|_1=1}\psi(n+e),\quad |x|_1=|x_1|+\dots+|x_d|
\ee
is the discrete Laplacian, and the potential $V_{\alpha,x}$ is
the multiplication operator with the sequence
\be
 V_{\alpha,x}(n)= f(x+\alpha\star n),\quad 
 (\alpha\star n)_j = \alpha_j n_j
\ee
$\alpha\in \R^d$, $x\in\T^d$, $\T=\R/\Z$, and $f:\T^d\to\R$
is a non constant real-analytic function. My main goal will
be to show

\begin{theorem}\label{thm:main1}
 Let $\eps>0$. Then there exists $\lambda_0>0$ such that
 for $\lambda\in (0,\lambda_0)$, there exists a set of frequencies
 $\mathcal{A}_{\lambda}\subseteq [0,1]^d$ of measure
 $|\mathcal{A}_{\lambda}|\geq 1-\eps$ such that for $\alpha\in\mathcal{A}_{\lambda}$,
 $x\in\T^d$ the Schr\"odinger operator $H_{\lambda,\alpha,x}=\Delta+\lambda
 V_{\alpha,x}$ has some absolutely continuous spectrum.
\end{theorem}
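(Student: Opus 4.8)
The plan is to pass to the Aubry dual of $H_{\lambda,\alpha,x}$, establish a partial localization statement there, and transport it back. Searching for generalized eigenfunctions of $H_{\lambda,\alpha,x}$ in quasi-Bloch form $\psi(n)=\E^{2\pi\I\theta\cdot n}\vphi(x+\alpha\star n)$, $\theta\in\T^d$, $\vphi\colon\T^d\to\C$, a one-line computation shows that $H_{\lambda,\alpha,x}\psi=E\psi$ holds for every $x$ precisely when $\vphi$ solves $\mathcal H_\theta\vphi=E\vphi$ on $L^2(\T^d)$, where
\[
 (\mathcal H_\theta\vphi)(\xi)=\sum_{j=1}^d\bigl(\E^{2\pi\I\theta_j}\vphi(\xi+\alpha_je_j)+\E^{-2\pi\I\theta_j}\vphi(\xi-\alpha_je_j)\bigr)+\lambda f(\xi)\vphi(\xi).
\]
In terms of Fourier coefficients $\mathcal H_\theta$ is unitarily equivalent to the operator $\widehat{\mathcal H}_\theta=D_\theta+\lambda C$ on $\ell^2(\Z^d)$, with $D_\theta$ the multiplication operator by the \emph{separable} potential $v_\theta(k)=2\sum_{j=1}^d\cos\bigl(2\pi(\theta_j+\alpha_jk_j)\bigr)$ — this is exactly where the special frequency action $(\alpha\star n)_j=\alpha_jn_j$ pays off — and $C$ the convolution by $\widehat f$, which decays exponentially because $f$ is real-analytic. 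Assembling over $\theta$ and remembering that only $\alpha\bmod\Z^d$ matters, one obtains a unitary equivalence of direct integrals $\int^\oplus_{\T^d}H_{\lambda,\alpha,x}\,dx\cong\int^\oplus_{\T^d}\widehat{\mathcal H}_\theta\,d\theta$. Crucially, in $\widehat{\mathcal H}_\theta$ the coupling $\lambda$ multiplies the hopping, so small $\lambda$ corresponds to the weak-hopping (``large disorder'') regime of the dual family, where localization is to be expected.

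The heart of the matter is therefore to prove: for $\alpha$ in a set of measure $\geq 1-\eps$ and all $\lambda<\lambda_0(\eps)$, the operator $\widehat{\mathcal H}_\theta$ has, for $\theta$ in a positive-measure set, normalizable exponentially decaying eigenfunctions, with eigenvalues $E(\theta)$ filling out a positive-measure set of energies. At $\lambda=0$, $\widehat{\mathcal H}_\theta=D_\theta$ has eigenvectors $\delta_k$ with eigenvalues $v_\theta(k)$; for $\lambda>0$ I would run a multi-scale analysis (equivalently a KAM iteration) that builds, around suitable base sites $k_0$, finite-volume Green's functions with exponential off-diagonal decay, and then extracts the eigenfunction. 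The single obstruction is a resonance: two far-apart sites $k,k'$ with $v_\theta(k)$ and $v_\theta(k')$ both close to the target energy and linked through the hopping. Two features of the problem tame it: the hopping weights $\lambda\widehat f(k-k')$ are exponentially small in $|k-k'|$, so any resonant chain contributes an exponentially small factor; and $v_\theta$ is separable, so the near-coincidence $v_\theta(k)\approx v_\theta(k')$ decouples across coordinates and, under a Diophantine condition $\|\alpha_jq\|\geq\gamma|q|^{-\tau}$ imposed on each $\alpha_j$ (which holds off an $\alpha$-set of measure $O(\gamma)$, hence $\leq\eps$ once $\gamma=\gamma(\eps)$ is small; any further small-measure frequency exclusions from the scheme are absorbed likewise), is pinned down direction by direction — effectively reducing the resonance bookkeeping to a one-dimensional estimate in each coordinate. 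Propagating these bounds through the scales yields the eigenfunctions. I expect this step to carry essentially all the difficulty; asking only for localization on a positive-measure set of energies, rather than on the whole spectrum, is precisely what keeps it short of a full higher-dimensional localization theorem.

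Finally I would transport the result back. An exponentially decaying eigenfunction $\widehat\vphi_\theta\in\ell^2(\Z^d)$ of $\widehat{\mathcal H}_\theta$ with eigenvalue $E(\theta)$ is, on the Fourier side, an analytic function $\vphi_\theta$ on $\T^d$, hence gives the bounded quasi-periodic solution $\psi_{\theta,x}(n)=\E^{2\pi\I\theta\cdot n}\vphi_\theta(x+\alpha\star n)$ of $(H_{\lambda,\alpha,x}-E(\theta))\psi=0$ — and, since $\vphi_\theta$ does not involve $x$, this is valid for \emph{every} $x\in\T^d$. Although each fibre $\widehat{\mathcal H}_\theta$ is pure point, over the positive-measure $\theta$-set the eigenvalue $E(\theta)$ genuinely moves: it is a small perturbation of the non-constant analytic function $\theta\mapsto v_\theta(k_0)$, so $\nabla_\theta E\neq0$ a.e., and consequently the relevant spectral subspace of $\int^\oplus\widehat{\mathcal H}_\theta\,d\theta$ is unitarily equivalent to multiplication by $E(\theta)$, whose spectrum is absolutely continuous (the push-forward of $d\theta$ under $E$ is absolutely continuous since $E$ is Lipschitz with a.e. non-vanishing gradient). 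By the duality above, this absolutely continuous part is inherited by $\int^\oplus H_{\lambda,\alpha,x}\,dx$; and because the quasi-Bloch waves $\psi_{\theta,x}$ are explicit and defined uniformly in $x$, they furnish for each individual $x$ a Weyl-type eigenfunction expansion on a positive-measure set of energies, so $H_{\lambda,\alpha,x}$ has a non-zero absolutely continuous component for \emph{all} $x\in\T^d$, not merely almost every $x$. Taking $\mathcal A_\lambda$ to be the frequency set from the previous paragraph, of measure $\geq1-\eps$, completes the proof.
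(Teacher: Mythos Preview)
Your proposal follows the paper's architecture: Aubry duality, localization for the dual family $\widehat{\mathcal H}_\theta$, an eigenvalue parametrization $\theta\mapsto E(\theta)$ with non-vanishing gradient, and absolute continuity via conjugation to multiplication by $E$. The paper's operator $Q$ is precisely your eigenfunction expansion made rigorous, and your gradient argument is its Proposition~\ref{prop:evfunc}.

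The gap is in the localization step, and it is not where you locate it. The Green's-function multiscale estimate you sketch (exponential hopping, separable potential, Diophantine $\alpha$) is essentially Bourgain \cite{b07}, which the paper takes as input. What that does \emph{not} supply, and what your outline does not address, is how to follow a \emph{single} eigenvalue branch through the scales so that a well-defined $E(\theta)$ emerges with the $C^1$ control needed for $\nabla E\neq 0$. This requires the relevant finite-volume eigenvalue to remain \emph{simple} (uniformly separated from the rest of the spectrum) at every scale; without that, the eigenfunctions at successive scales need not converge to a single limit and there is nothing to differentiate. The paper secures simplicity by a device you do not mention (Theorem~\ref{thm:stepA}): it fixes the base point $y$, treats $\alpha$ itself as the perturbation parameter, eliminates a $y$-\emph{dependent} small set of frequencies at each scale via semi-algebraic frequency-elimination (Proposition~\ref{prop:freqelim}), and only afterwards recovers ``for most $\alpha$, for most $\theta$'' by a Fubini argument. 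A fixed Diophantine condition on $\alpha$ plus unspecified ``further exclusions'' does not do this job; the exclusions are genuinely energy-dependent, which is why $\mathcal A_\lambda$ depends on $\lambda$. Your remark that separability reduces resonance bookkeeping to one-dimensional estimates is pertinent to off-diagonal Green's function decay but does not touch the simplicity issue, which the paper itself flags in the introduction as the main obstacle left open by \cite{b07}.
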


Previously, Bourgain \cite{bbook}, \cite{b07} has shown that this class of operators
exhibits extended states and in particular the spectrum
is purely continuous. Denoting by $\sigma_{\mathrm{ac}}(H_{\lambda,\alpha,x})$
the absolutely continuous spectrum of $H_{\lambda,\alpha,x}$,
I will in fact show that
\be
|\sigma(\Delta)\setminus\sigma_{\mathrm{ac}}(H_{\lambda,\alpha,x})| \leq \eps,
  \quad\sigma(\Delta)=[-2d,2d].
\ee
Unfortunately, I am unable to address the structure of 
$\sigma_{\mathrm{ac}}(H_{\lambda,\alpha,x})$ as a set or to
show that the absolutely continuous spectrum is pure.

I want to mention at this point that the results of this paper
can be extended in various ways with minimal effort. Maybe, the
most important one is that instead of considering $\psi$ to be
real valued, one could take $\psi(n)\in\C^k$ and $V(n)$ to be
an appropriate Hermitian matrix. This extension would allow one
to first consider Schr\"odinger operators on so called
{\em strips}, i.e. defining the Laplacian on $\Z\times\{1,\dots,W\}$
for some $W\geq 2$. Furthermore, this would allow one to consider
periodic directions in the definition of $V$. Finally, it
should be possible to replace the Laplacian $\Delta$ by
a more general hopping operator $T$ of the form
\be
 T\psi(n)=\sum_{k\in\Z^d} t_k \psi(n+k)
\ee
for $|t_k|\leq \E^{-\eta |k|}$ for some $\eta > 0$.

\bigskip

The proof to exhibit absolutely continuous spectrum proceeds
by first studying the dual operator $\widehat{H}_{\lambda,\alpha,x}$
which exhibits Anderson localization. 
As already mentioned, this implies that the
operator $H_{\lambda,\alpha,x}$ has extended states. In
order to obtain absolutely continuous spectrum, I will
need to obtain further control on the eigenvalues of
$\widehat{H}_{\lambda,\alpha,x}$. This will be done by using
the methods of \cite{kskew}. For an implentation of this strategy
in the simpler context of limit-periodic Schr\"odinger operators
see \cite{klp}. For further consequences of the eigenvalue
perturbation results for the skew-shift, see \cite{kcon}.
I expect that the methods of \cite{kcon} can be used to prove
results about the integrated density of states and eigenvalue
statistics for quasi-periodic Schr\"odinger operators. However,
the statements will be weaker as the set of eliminated frequencies
depends on the energy.

\bigskip

Let us now review what else is known about the absolutely continuous
spectrum for Schr\"odinger operators. Proving absolutely continuous
spectrum for the free and periodic Schr\"odinger operators can
be done by fairly standard methods. Proofs in dimension one usually
rely on the fact that the Schr\"odinger equation
\be
 H_x u(n)=u(n+1)+u(n-1)+ f(T^n x) u(n)
\ee
is equivalent to the transfer matrix equation
\be
 \begin{pmatrix} u(n+1)\\ u(n)\end{pmatrix}= A(T^n x)  
  \begin{pmatrix} u(n)\\ u(n-1)\end{pmatrix},
   \quad A(x)=\begin{pmatrix} E-f(x)&-1\\1&0\end{pmatrix},
\ee
where $Tx=x+\alpha\pmod{1}$. Using methods based on KAM, one
can show absolutely continuous spectrum by showing that
the cocycle
\be
 \T\times\C^2\ni(x,v)\mapsto(x+\alpha,A(x) v)
\ee
is conjugated to a cocycle of rotations. This
was first done by Dinaburg and Sinai \cite{ds75} then improved by
Eliasson \cite{e92} to prove purely absolutely continuous spectrum.
The final improvement was by Avila and Jitomirskaya \cite{aj10}
using input based on duality. Furthermore, Kotani theory
\cite{d07} allows one to describe the absolutely continuous
spectrum by the vanishing of the Lyapunov exponent,
$L(E)=\lim_{n\to\infty}\frac{1}{n}\int\log\|A(T^{n-1}x)\cdots
A(x)\|dx$. This has allowed Avila \cite{apre} to give an
excellent description of the absolutely continuous spectrum.

For operators in higher dimensions, the main work on
understanding almost-periodic potentials has been done in
the continuum. In \cite{kl1}, Karpeshina and Lee have
shown that the Schr\"odinger operator $H=-\Delta+V$ acting
on $L^2(\R^2)$ with $V$ a small enough limit-periodic potential
has some absolutely continuous spectrum
at high energies and that the spectrum contains a semi-axis.
In the discrete case, I have shown \cite{klp} that
limit-periodic potentials which are sufficiently well approximated
by periodic potentials lead to purely absolutely continuous
spectrum. 

Finally, in dimension two and for polyharmonic operators,
i.e. $(-\Delta)^{\ell}$ for $\ell\geq 2$, Karpeshina
and Shterenberg have exhibited in \cite{ks} the existence
of an absolutely continuous component in the spectrum
of a quasi-periodic operator on $L^2(\R^2)$.

\bigskip

Our conceptual understanding of the absolutely continuous
spectrum in dimensions $d\geq 2$ is much weaker than in
one dimension. Due to subordinacy theory \cite{jl99}, \cite{gp87},
we can morally characterize the absolutely continuous spectrum
as the set of energies, such that there exists a bounded
solution. The best, we can do in higher dimensions is
\cite{kl00}, which is too weak to imply the results of this
paper or \cite{klp}.

In the case of ergodic operators so in particular
almost-periodic ones, one even has Kotani theory and
thus can describe the absolutely continuous spectrum
as the essential closure of the set of energies where
the Lyapunov exponent vanishes.

Finally, let me mention that proving pure point spectrum
respectively proving that the spectrum is purely continuous
is a statement about solutions as one shows that all generalized
eigenfunctions are either square integrable or not. It would
be very interesting to obtain a description of the absolutely
continuous spectrum in terms of solutions in arbitrary dimension.

\bigskip

My methods are based on considering the dual operator.
This concept has been introduced in \cite{a78}, \cite{aa80}
for the almost Mathieu operator. This concept was further
developed in \cite{gjls97}, \cite{bj02}.

As already mentioned the proof in this paper proceeds
by using that Anderson localization holds for the dual
operator $\widehat{H}_{\lambda,\alpha}$ defined in
\eqref{eq:defdual1} respectively its fibers see
\eqref{eq:defdual2}. This implies in particular that
for the almost every energy $E$ in the absolutely continuous
component exhibited in Theorem~\ref{thm:main1} there
exists an extended state. By an extended state,
I mean in this context an almost-periodic solution 
$u:\Z^d\to\C$ of the eigenvalue problem
\be
 H_{\lambda,\alpha,x} u = E u.
\ee
The main problem in order to carry out such a construction
is that one needs to relate the eigenfunctions of
$\widehat{H}_{\lambda,\alpha,x}^{\Lambda_{r}(0)}$
and $\widehat{H}_{\lambda,\alpha,x}^{\Lambda_{R}(0)}$
for $r < R$, where $\Lambda_r(0) = [-r,r]^d$. This is
only possible if one knows that the corresponding
eigenvalues are simple. In order to ensure this, we
use the methods of \cite{kskew}, which put as into
a perturbative regime. This problem was solved
in \cite{klp} by a novel estimate.

I believe that obtaining an understanding of how to
prove simplicity of the eigenvalues in a problem of this
type, would lead to major improvements in all known
results.

%
%
%

\section{Strategy of the proof}

Let us now deal with the specifics of the proof. First, we can
write $f$ in Fourier series as
\be
 f(x) = \sum_{k\in\Z^d} \hat{f}(k) e(k\cdot x),\quad e(t)=\E^{2\pi\I t},
  \quad k\cdot x=\sum_{j=1}^{d} k_j x_j.
\ee
We will restrict ourself for simplicity to the potential
$V_{\alpha}(n)=f(n\star\alpha)$. For $u\in\ell^{1}(\Z^d)$,
we define its Fourier transform by $\hat{u}(x)=\sum_{k\in\Z^d}
e(k\cdot x) u(k)$. The Fourier transform of
$(\Delta + \lambda V_{\alpha})\psi$ is given by
\be
 \sum_{j=1}^{d} 2\cos(2\pi x_j) \hat{\psi}(x)+\lambda
  \sum_{k\in\Z^{d}}\hat{f}(k)\hat\psi(x+k\star\alpha).
\ee
We define the operator $\widehat{H}_{\lambda,\alpha}: L^2(\T^d)\to L^2(\T^d)$ by
\be\label{eq:defdual1}
 \widehat{H}_{\lambda,\alpha} \psi(x) =
  \sum_{j=1}^{d} 2\cos(2\pi x_j)\psi(x)+\lambda
   \sum_{k\in\Z^{d}}\hat{f}(k)\psi(x+k\star\alpha).
\ee
We see that $\widehat{H}_{\lambda,\alpha}\psi(x)$ only depends on
$\{\psi(x+k\star\alpha)\}_{k\in\Z^d}$. It thus makes
sense to consider the fibered  operator
\be\label{eq:defdual2}
 \widehat{H}_{\lambda,\alpha,x} \psi(n)=
  \left(\sum_{j=1}^{d} 2\cos(2\pi (x_j +n_j\alpha_j))
   \right)\psi(n)+\lambda\cdot \sum_{k\in\Z^d} \widehat{f}(k) \psi(n+k).
\ee
As $f$ is real-analytic, we have that $|\hat{f}(k)|\leq C \E^{-\eta |k|}$
for $C,\eta > 0$. For simplicity, we will assume in the following that
$|\hat{f}(k)|\leq\E^{-\eta |k|}$, which is possible by changing $\lambda>0$.
We can write $\widehat{H}_{\lambda,\alpha,x}=\lambda T + W_{\alpha,x}$,
where $T$ is the hopping operator
\be
T\psi(n)=\sum_{k\in\Z^d} \widehat{f}(k) \psi(n+k)
\ee
and $W_{\alpha,x}$ is the multiplication operator by the
sequence 
\be
 W_{\alpha,x}(n)=\sum_{j=1}^{d} 2\cos(2\pi(x_j+n_j\alpha_j))=W(x+n\star\alpha),
  \quad W(x)=\sum_{j=1}^{d}2\cos(2\pi x_j).
\ee
In particular, $\widehat{H}_{\lambda,\alpha,x}$ is again
a quasi-periodic operator but the coupling constant is changed
from small to large.
Finally, we note that we view $\widehat{H}_{\lambda,\alpha,x}$
as an operator acting on $\ell^2(\Z^d)$. 

Given $\Lambda\subseteq\Z^d$, we denote by $A^{\Lambda}$ the
restriction to $\ell^2(\Lambda)$ of an operator
$A:\ell^2(\Z^d)\to\ell^2(\Z^d)$. We introduce the
cube
\be
 \Lambda_r(n)=\{x\in\Z^d:\quad |n-x|_{\infty}\leq r\}
\ee
where $|x|_{\infty}=\max(|x_1|,\dots,|x_d|)$.  Finally,
we introduce the following definition.

\begin{definition}
 An eigenvalue $E$ of a self-adjoint operator $A$ is
 called $\delta$-simple if $\tr(P_{[E-\delta,E+\delta]}(A))=1$.
\end{definition}

The following theorem provides the perturbative analysis of eigenfunctions.
It is note worthy that the perturbation parameter is the
frequency $\alpha\in [0,1]^d$, which enters the problem
as the fast variable. 

\begin{theorem}\label{thm:stepA}
 Let $\eps\in (0,1)$ and $R_1 \geq 1$ be large enough.
 There exists $\lambda_1>0$ and sequences
 $R_1 < R_2 < R_3 < \dots$, $\delta_1 > \delta_2 > \delta_3 > \dots$
 such that for $\lambda\in (0,\lambda_1)$  and $y\in\T^d$, we have
 \begin{enumerate}
  \item $R_j = (R_{j-1})^{10}$, $\delta_1=\lambda^{\frac{1}{20}}$
   and $\delta_j = \lambda^{\frac{1}{20}} \exp( - (R_{j-1})^{\frac{1}{2}})$
   for $j\geq 2$.
  \item There exists $G_{y}\subseteq [0,1]^d$ of measure
   $|G_{y}|\geq 1- \eps$.
  \item For $j\geq 1$, there exists a function 
  $E_j:G_{y}\to\R$ such that for $\alpha\in G_{y}$
   \be
    E_j(\alpha)\in\sigma(
    \widehat{H}^{\Lambda_{R_j}(0)}_{\lambda,\alpha,y})
   \ee
   is $\delta_j$-simple.
  \item We have $|E_{j}(\alpha) - E_{j-1}(\alpha)|\leq (\delta_j)^{10}$ for $\alpha\in G_{y}$
   and $j\geq 0$. For $\psi_j$ the corresponding normalized eigenfunctions, we have
   \be
    \|\psi_j-\psi_{j-1}\|\leq (\delta_j)^3.
   \ee
 \end{enumerate}
\end{theorem}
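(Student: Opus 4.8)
The plan is to build the sequences $(R_j,\delta_j)$ and the eigenvalue branches $E_j(\alpha)$ inductively, treating the frequency $\alpha$ as the perturbation parameter exactly as in \cite{kskew}, \cite{klp}. At scale $R_1$, for $\lambda$ small the operator $\widehat{H}^{\Lambda_{R_1}(0)}_{\lambda,\alpha,y}$ is $\lambda T^{\Lambda_{R_1}(0)}$ plus the diagonal $W_{\alpha,y}$, whose entries are $\sum_j 2\cos(2\pi(y_j+n_j\alpha_j))$. The first step is to show that for $\alpha$ outside a small-measure bad set the diagonal entries are $\lambda^{1/10}$-separated on $\Lambda_{R_1}(0)$: this is a counting/transversality estimate in $\alpha$, since for fixed distinct $n,m$ the function $\alpha\mapsto W(y+n\star\alpha)-W(y+m\star\alpha)$ is real-analytic and non-constant, so the set where it is small has measure controlled by the small-value estimate for analytic functions. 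Summing over the $O(R_1^{2d})$ pairs and over the (countably many) scales while keeping total measure below $\eps$ fixes the exceptional set $G_y$; here one needs $R_1$ large and the $\delta_j$ chosen as in item (i), with the super-exponential gain $\exp(-R_{j-1}^{1/2})$ absorbing the $R_j^{2d}$ losses. Once the diagonal is separated, standard first-order eigenvalue perturbation ($\lambda\|T\|\ll$ gap) gives a $\delta_1$-simple eigenvalue $E_1(\alpha)$ near any prescribed diagonal value, with localized eigenfunction $\psi_1$.

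For the inductive step, assume $E_{j-1}(\alpha)$ is a $\delta_{j-1}$-simple eigenvalue of $\widehat{H}^{\Lambda_{R_{j-1}}(0)}_{\lambda,\alpha,y}$ with eigenfunction $\psi_{j-1}$ that is exponentially localized near the center (decay rate coming from the Combes--Thomas / resolvent estimate at distance $\delta_{j-1}$ from the spectrum). Passing to the larger box $\Lambda_{R_j}(0)$, write $\widehat{H}^{\Lambda_{R_j}(0)}_{\lambda,\alpha,y}$ in block form relative to $\Lambda_{R_{j-1}}(0)$ and its complement. The key mechanism is Schur complement / finite-volume perturbation: $\psi_{j-1}$ (extended by zero) is an approximate eigenfunction of the larger operator with error $O(\E^{-\eta R_{j-1}/2})$, so there is a genuine eigenvalue $E_j(\alpha)$ within that distance. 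To get $\delta_j$-\emph{simplicity} one must rule out a second eigenvalue within $\delta_j$: this is precisely the delicate point. One argues that any other eigenfunction of the big box at energy within $\delta_j$ of $E_j(\alpha)$ must, by the resolvent bound on the annulus $\Lambda_{R_j}\setminus\Lambda_{R_{j-1}}$ (where the diagonal is again separated from $E_j(\alpha)$ off the bad set), be concentrated in $\Lambda_{R_{j-1}}(0)$, hence would force a second eigenvalue of the \emph{small} box within roughly $\delta_j + \E^{-\eta R_{j-1}/2}\ll\delta_{j-1}$ of $E_{j-1}(\alpha)$, contradicting $\delta_{j-1}$-simplicity. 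This is the step that needs the estimate of \cite{klp}, and it is the main obstacle: one must quantitatively propagate simplicity from scale to scale, and the choice $\delta_j=\lambda^{1/20}\exp(-R_{j-1}^{1/2})$, $R_j=R_{j-1}^{10}$ is dictated by making the annulus resolvent bound ($\sim\delta_j^{-1}\E^{-\eta R_{j-1}}$) beat $1$ with room to spare.

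Finally, the quantitative closeness in item (iv) follows from second-order perturbation theory once simplicity is in hand: $|E_j(\alpha)-E_{j-1}(\alpha)|\le \|(\text{off-diagonal coupling})\psi_{j-1}\|^2/\mathrm{gap} + \|(\text{coupling})\psi_{j-1}\|\cdot(\text{tail})$, and both error terms are bounded by $\E^{-c\eta R_{j-1}}$, which is $\le\delta_j^{10}$ by the choice of parameters; the eigenfunction bound $\|\psi_j-\psi_{j-1}\|\le\delta_j^3$ comes from the standard formula $\psi_j-\psi_{j-1}=(E_j-\widehat{H})^{-1}\big|_{\{\psi_{j-1}\}^\perp}(\widehat{H}-E_{j-1})\psi_{j-1}+(\text{tail of }\psi_{j-1}\text{ outside }\Lambda_{R_{j-1}})$, whose norm is controlled by $\mathrm{gap}^{-1}$ times the approximate-eigenfunction error, i.e.\ by $\delta_j^{-1}\E^{-\eta R_{j-1}/2}$. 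Throughout, the measure of $G_y$ is kept $\ge 1-\eps$ because the only frequencies removed are those violating the diagonal-separation conditions at some scale, and those have been summed to be $<\eps$; note $G_y$ depends on $y$ only through which $\alpha$ make $W(y+n\star\alpha)$ clustered, and uniformity in $y$ is obtained by taking a slightly larger exceptional set (replacing the transversality estimate with one uniform over $y\in\T^d$, again available since the relevant analytic functions depend analytically on $(y,\alpha)$ jointly).
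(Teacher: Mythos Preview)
Your initial step and the broad inductive skeleton are right, but there is a real gap at the heart of the induction: you control the resolvent on the annulus $\Lambda_{R_j}\setminus\Lambda_{R_{j-1}}$ solely through \emph{diagonal separation} of $W_{\alpha,y}(n)$ from $E_{j-1}(\alpha)$, and this cannot be iterated. The annulus contains $\sim R_j^d=R_{j-1}^{10d}$ sites, so to keep the total excluded $\alpha$-measure summable in $j$ you would need a separation threshold $c_j$ decaying at least like a negative power of $R_j$; but then the Combes--Thomas rate you invoke is of order $c_j/\lambda$, and over distances $\sim R_{j-1}$ this gives decay $\exp(-c_jR_{j-1}/\lambda)\to 1$. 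Concretely, any choice with $\sum_j R_j^{d}\, c_j^{\beta}<\infty$ forces $c_jR_{j-1}\to 0$, so neither the exponential localization of $\psi_{j-1}$ nor the concentration argument that is supposed to propagate simplicity survives past the first few scales. A second difficulty you gloss over is that $E_{j-1}(\alpha)$ is defined only on the good set and is not a priori an analytic function of $\alpha$, so the transversality/small-value estimate for $\{\alpha:|W_{\alpha,y}(n)-E_{j-1}(\alpha)|<c_j\}$ is not directly available.

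The paper resolves both issues by importing two ingredients you do not mention. First, Bourgain's large-deviation theorem for the Green's function gives $(\gamma,\tau)$-suitability of boxes $\Lambda_N(n)$ with a \emph{fixed} decay rate $\gamma$ independent of the scale, so there is no deterioration. Second, a semi-algebraic frequency-elimination step (Proposition~\ref{prop:freqelim}) removes, at each scale, a set of $\alpha$ of measure $\le N^{-2}$ after which suitability holds for \emph{every} energy close to the finite-volume spectrum and every center $n$ in the required range; this is what handles the $\alpha$-dependence of $E_{j-1}$. The eigenvalue extension and simplicity propagation are then carried out by a deterministic lemma (Theorem~\ref{thm:evexten}), whose proof does use exactly the test-function and concentration mechanism you sketch---but it works only because the Green's function decay rate $\gamma$ is uniform in $j$. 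Your diagonal-separation argument is essentially the proof of the initial condition (Section~\ref{sec:pfinit}) and does not extend beyond the first scale.
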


In the theorem, when $j = 0$ we formally set $\psi_{-1}=\delta_0$ and
$E_{-1}=\sum_{j=1}^{d} 2\cos(2\pi y_j)$.
We will explain the proof of this theorem in Section~\ref{sec:pfstepA}.
Large parts of the proof follow ideas from \cite{kskew}.

In order to deduce Theorem~\ref{thm:main1}, we will need to
reformulate the conclusions of the previous theorem. In particular,
instead of fixing $x\in\T^d$ and varying $\alpha\in [0,1]^d$, we
will need to do the opposite. For this consider the set
\be
 A=\{(x,\alpha)\in\T^d\times [0,1]^d:\quad
  \alpha\in G_x\}.
\ee
By the previous theorem, we have that $|A|\geq 1-\eps$.
Introduce
\be
 \mathcal{A}=\{\alpha:\quad |\{x:\quad (x,\alpha)\in A\}|
  \geq 1-\eps^{\frac{1}{2}}\}.
\ee
One can check that $|\mathcal{A}|\geq 1-\eps^{\frac{1}{2}}$.
Let us now fix $\alpha\in\mathcal{A}$ and define
\be
 G=\{x:\quad (x,\alpha)\in A\}.
\ee
By construction, we clearly have that $|G|\geq 1-\sqrt{\eps}$.
For $x\in G$ the conclusions (ii)-(v) of the previous
theorem hold. Unfortunately, taking the limit of the eigenvalues,
we've constructed so far doesn't necessarily lead to a nice
function $E: G\to\R$. The following proposition remedies
this situation.

\begin{proposition}\label{prop:evfunc}
 Let $\eps_1 > \eps^{\frac{1}{2}}$. There exist $\delta>0$, $G_1\subseteq \T^d$,
 a function $\gamma:\T^d\to\R$, and a map $\psi:\T^d\to\ell^2(\Z^d)$
 such that
 \begin{enumerate}
  \item $|G_1\cap G| \geq 1-\eps_1$.
  \item $|\nabla\gamma(x)|\geq\delta$ for $x\in G_1$.
  \item For $x\in G_1\cap G$ we have 
   \be
    \gamma(x)\in \sigma(\widehat{H}_{\lambda,\alpha,x}).
   \ee
  \item $\|\psi(x)\|=1$ for $x\in G_1\cap G$ and $\psi(x)=0$
   for $x\in\T^d\setminus (G_1\cap G)$.
  \item $\widehat{H}_{\lambda,\alpha,x} \psi(x)=\gamma(x)\psi(x)$
   for $x\in G_1\cap G$.
  \item For $x\in G_1\cap G$, we have $\|\psi(x)\|_{\ell^1(\Z^d)} \leq 2$.
  \item Fix $x\in\T^d$ and let
   \be
    \mathcal{L}=\{\ell:\quad x+\ell\star\alpha\in G_1\cap G\},\quad
     \psi_\ell(x;n) = \psi(x-\ell\star \alpha; n + \ell).
   \ee
   Then the $\{\psi_{\ell}\}_{\ell\in\mathcal{L}}$ form an orthonormal set
   in $\ell^2(\Z^d)$ consisting of eigenfunctions of $\widehat{H}_{\lambda,\alpha,x}$.
   Finally $\widehat{H}_{\lambda,\alpha,x} \psi_{\ell}(x)=\gamma(x-\ell\star\alpha)\psi_{\ell}(x)$.
 \end{enumerate}
\end{proposition}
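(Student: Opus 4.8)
The plan is to pass to the limit $j\to\infty$ in Theorem~\ref{thm:stepA}, to control the gradient of the limiting eigenvalue by a Hellmann--Feynman identity (rather than by Cauchy estimates, which are too lossy here), and then to excise the region where this gradient is small. Fix $\alpha\in\mathcal{A}$ and put $G=\{x:(x,\alpha)\in A\}$, so $|G|\ge 1-\sqrt\eps$, and for $x\in G$ the objects $E_j(x),\psi_j(x)$ of Theorem~\ref{thm:stepA} are available. Since $\sum_j\delta_j^{10}<\infty$ and $\sum_j\delta_j^3<\infty$, the limits $E(x):=\lim_j E_j(x)$ and $\psi_\infty(x):=\lim_j\psi_j(x)$ exist for $x\in G$, with $\|\psi_\infty(x)\|=1$ and $\|\psi_\infty(x)-\delta_0\|\le 2\lambda^{3/20}$. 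Because $\psi_j-\psi_{j-1}$ is supported on $\Lambda_{R_j}(0)$, one has $\|\psi_j-\psi_{j-1}\|_{\ell^1}\le (2R_j+1)^{d/2}\delta_j^3$; with $R_j=(R_{j-1})^{10}$ and $\delta_j=\lambda^{1/20}\E^{-(R_{j-1})^{1/2}}$ this is summable to something $\le 1$ for $\lambda$ small, so $\|\psi_\infty(x)\|_{\ell^1}\le 2$. Moreover, since $\psi_j(x)$ is supported on $\Lambda_{R_j}(0)$, the vector $\widehat{H}_{\lambda,\alpha,x}\psi_j(x)-E_j(x)\psi_j(x)=(\widehat{H}_{\lambda,\alpha,x}-\widehat{H}^{\Lambda_{R_j}(0)}_{\lambda,\alpha,x})\psi_j(x)$ is supported on $\Z^d\setminus\Lambda_{R_j}(0)$; using $|\hat f(k)|\le\E^{-\eta|k|}$, the bound $\|\psi_j(x)|_{\Lambda_{R_{j-1}}(0)^c}\|\le\delta_j^3$, and $\|\psi_j\|_\infty\le 1$, one gets $\|\widehat{H}_{\lambda,\alpha,x}\psi_j(x)-E_j(x)\psi_j(x)\|\le C\lambda(\delta_j^3+\E^{-\eta R_j/2})(R_j)^{(d-1)/2}\to0$. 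Hence $\widehat{H}_{\lambda,\alpha,x}\psi_\infty(x)=E(x)\psi_\infty(x)$ and $E(x)\in\sigma(\widehat{H}_{\lambda,\alpha,x})$.

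\emph{The gradient, which is the crux.} On the open set $\mathcal{O}_j\supseteq G$ on which $E_j$ is a simple eigenvalue of $\widehat{H}^{\Lambda_{R_j}(0)}_{\lambda,\alpha,\cdot}$ it is real-analytic in $x$, and since only the diagonal $W_{\alpha,x}$ depends on $x$, the Hellmann--Feynman formula gives
\be
 \nabla E_j(x)=\sum_{n\in\Lambda_{R_j}(0)}|\psi_j(x;n)|^2\,\nabla W(x+n\star\alpha),\qquad x\in\mathcal{O}_j .
\ee
Consequently $\|\nabla E_j-\nabla E_{j-1}\|_{L^\infty(\mathcal{O}_j\cap\mathcal{O}_{j-1})}\le 2\|\nabla W\|_\infty\|\psi_j-\psi_{j-1}\|\le 2\|\nabla W\|_\infty\delta_j^3$, which is summable, and $|\nabla E_j(x)-\nabla W(x)|\le C\|\nabla W\|_\infty\|\psi_j(x)-\delta_0\|\le C_0\lambda^{3/20}$ uniformly in $j$ and $x\in G$. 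This is exactly the estimate a Cauchy bound cannot provide: the width in $x$ to which $E_j$ extends analytically shrinks like $\delta_j$ (it is the spectral spacing enforcing $\delta_j$-simplicity), so the Cauchy bound for $\nabla(E_j-E_{j-1})$ would be merely $O(1)$, hence not summable — whereas Hellmann--Feynman converts the localization $\psi_j\approx\delta_0$ already supplied by Theorem~\ref{thm:stepA} into the control we need. Since $\nabla E_j\to\Gamma(x):=\sum_n|\psi_\infty(x;n)|^2\nabla W(x+n\star\alpha)$ uniformly on $G$ with $\Gamma$ continuous, and $E_j\to E$ uniformly, a standard (though slightly delicate, as $G$ need not be open) argument shows $E$ is the restriction to $G$ of a $C^1$ function with gradient $\Gamma$, so $|\nabla E-\nabla W|\le C_0\lambda^{3/20}$ on $G$.

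\emph{Assembling $\gamma$, $G_1$, $\psi$.} Because $\nabla W(x)=-4\pi(\sin2\pi x_1,\dots,\sin2\pi x_d)$ vanishes only on the finite set $\{0,\tfrac12\}^d$, we may fix $\delta\in(0,1)$ with $|G_1|\ge1-(\eps_1-\sqrt\eps)$ for $G_1:=\{x\in\T^d:|\nabla W(x)|>3\delta\}$; then $|G_1\cap G|\ge1-\eps_1$, giving (i). Shrinking $\lambda$ so that $C_0\lambda^{3/20}<\delta$, extend $(E-W)|_{G_1\cap G}$ to $h\in C^1(\T^d)$ with $\|h\|_{C^1}\le\delta$ (Whitney extension) and set $\gamma:=W+h$, $\psi(x):=\psi_\infty(x)$ for $x\in G_1\cap G$ and $\psi(x):=0$ otherwise. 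Then $\gamma=E$ on $G_1\cap G$ and $|\nabla\gamma|\ge|\nabla W|-\delta>2\delta\ge\delta$ on $G_1$, which is (ii); (iii) and (v) are the eigenpair construction, (iv) and (vi) the first paragraph. Finally, for (vii), the covariance $\widehat{H}_{\lambda,\alpha,x}S_\ell=S_\ell\widehat{H}_{\lambda,\alpha,x-\ell\star\alpha}$ with $(S_\ell u)(n)=u(n+\ell)$ shows that $\psi_\ell(x)=S_\ell\psi(x-\ell\star\alpha)$ is a normalized eigenfunction of $\widehat{H}_{\lambda,\alpha,x}$ with eigenvalue $\gamma(x-\ell\star\alpha)$, nonzero exactly for the $\ell$ collected in $\mathcal{L}$; since each $\psi_\ell$ is $O(\lambda^{3/20})$-close to the unit vector $\delta_{-\ell}$, the family is nearly orthonormal, and for a.e.\ $x$ the eigenvalues $\gamma(x-\ell\star\alpha)$, $\ell\in\mathcal{L}$, are pairwise distinct, so the $\psi_\ell$ are then genuinely orthonormal, being eigenvectors of the self-adjoint $\widehat{H}_{\lambda,\alpha,x}$ for distinct eigenvalues.

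I expect the main obstacle to be conclusion (ii): the limiting eigenvalue $\gamma$ is an infinite sum of analytic functions whose domains of analyticity shrink, so neither its regularity nor a lower bound for its gradient can be obtained by complex-analytic means; the Hellmann--Feynman identity is the device that circumvents this, and what then remains is the measure-theoretic packaging of the pointwise-on-$G$ data into a global $C^1$ function, kept transparent by writing $\gamma=W+h$.
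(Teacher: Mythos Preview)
Your argument is largely correct and parallels the paper's; in particular, the Hellmann--Feynman identity $\nabla E_j(x)=\sum_n|\psi_j(x;n)|^2\,\nabla W(x+n\star\alpha)$ is exactly the device the paper uses to get $|\nabla E_j|\geq\kappa$ on $G^{\kappa}=\{|\nabla W|\geq 2\kappa\}$, and your diagnosis of why a bare Cauchy estimate fails is on the mark. Where you extend $E-W$ by a single Whitney step, the paper builds $\gamma_j$ scale by scale, extending $\gamma_{j-1}-E_j$ from $G_j^{\kappa}$ to $G^{\kappa}$ via an explicit mollification lemma; both routes are viable, but your Whitney step needs the first-order jet condition $|E(x)-E(y)-\Gamma(y)\cdot(x-y)|=o(|x-y|)$ on $\overline{G_1\cap G}$, which you call ``standard'' but which actually requires the same analytic-continuation input the paper uses: choose $j$ with $(\delta_{j+1})^2\leq|x-y|<(\delta_j)^2$, use the analytic branch of $E_j$ on $B_{(\delta_j)^2}(y)$ (where the gap gives $|\nabla^2 E_j|\lesssim\delta_j^{-1}$, hence Taylor remainder $O(|x-y|\,\delta_j)$), and add $|E-E_j|\lesssim\delta_{j+1}^{10}$ and $|\Gamma-\nabla E_j|\,|x-y|\lesssim\delta_{j+1}^3|x-y|$. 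The paper's stepwise construction sidesteps having to assemble this argument. (A small slip: your bound on $\nabla E_j-\nabla E_{j-1}$ is only justified on $G$, not on all of $\mathcal O_j\cap\mathcal O_{j-1}$, since $\|\psi_j-\psi_{j-1}\|\leq\delta_j^3$ is asserted only there; this does not affect what you actually use.)

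There is, however, a genuine gap in your treatment of (vii). You assert that the eigenvalues $\gamma(x-\ell\star\alpha)$, $\ell\in\mathcal L$, are pairwise distinct for a.e.\ $x$, but give no reason --- and the proposition requires orthonormality for \emph{every} $x$. The near-orthogonality $\psi_\ell\approx\delta_{-\ell}$ only shows the family is almost orthonormal; to upgrade this you must rule out coincidences $\gamma(x)=\gamma(x-\ell\star\alpha)$ whenever both base points lie in $G$, and there is no soft a.e.\ argument for this (the level sets of a $C^1$ function need not interact nicely with the orbit $\{x-\ell\star\alpha\}$). The paper closes this by contradiction with the finite-volume simplicity: take $j$ with $R_j\geq 10|\ell|$, set $\varphi_1(n)=\psi_{j-1}(x;n)$ and $\varphi_2(n)=\psi_{j-1}(x-\ell\star\alpha;n+\ell)$; both are approximate eigenvectors of $\widehat H^{\Lambda_{R_j}(0)}_{\lambda,\alpha,x}$ at the common value (error $\leq\delta_j^5$) and satisfy $|\spr{\varphi_1}{\varphi_2}|\lesssim\lambda^{1/10}$, forcing $\tr\bigl(P_{[E_j(x)-\delta_j^2,\,E_j(x)+\delta_j^2]}(\widehat H^{\Lambda_{R_j}(0)}_{\lambda,\alpha,x})\bigr)\geq 2$, which contradicts the $\delta_j$-simplicity in Theorem~\ref{thm:stepA}. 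This use of simplicity at \emph{every} scale --- not just in the limit --- is the missing ingredient in your proof.
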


We denote by $\Gamma: L^2(\T^d)\to L^2(\T^d)$ the multiplication
operator by $\chi_{G\cap G_1} \gamma$. By (vi), we can define for $g\in L^{\infty}(\T^d)$
\be
 Q g(x)=\sum_{k\in\Z^d} q_{k}(x) g(x+k\star \alpha), \quad
  q_{k}(x)=\chi_{G}(x+k\star\alpha)\cdot\psi(x+k\star\alpha;-k).
\ee
A formal computation shows that $\widehat{H}_{\lambda,\alpha} Q = Q \Gamma$
is equivalent to
\be
 \sum_{j=1}^{d} 2\cos(2\pi x_j)q_{\ell}(x)+
  \lambda\sum_{k\in\Z^d} \hat{f}(k) q_{\ell-k}(x+k\star\alpha)
  =\gamma(x+\ell\star\alpha) q_{\ell}(x).
\ee
Using that $q_{k}(x)=\psi_{-k}(x;0)$ and $q_{\ell-k}(x+k\star\alpha)=\psi_{-\ell}(x;k)$
in the notation of Proposition~\ref{prop:evfunc} one easily verifies this.
The next lemma establishes that $Q$ is a bounded operator and thus that
we can make this computation.

\begin{lemma}
 The operator $Q$ is bounded $L^2(\T^d)\to L^2(\T^d)$.
\end{lemma}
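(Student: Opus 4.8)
Throughout write $\chi=\chi_{G\cap G_1}$; by Proposition~\ref{prop:evfunc}(iv) we have $\chi_G\psi=\chi\psi$, so $q_k(x)=\chi(x+k\star\alpha)\,\psi(x+k\star\alpha;-k)$, and I set $\Sigma(x)=\sum_{k\in\Z^d}|q_k(x)|$. The plan is to combine the quasi-periodic structure of $Q$ with two size bounds on $\psi$: the $\ell^1$-estimate (vi), and an $x$-uniform decay of $\psi(x;\cdot)$ extracted from Theorem~\ref{thm:stepA}. First I would record that $\Sigma\in L^1(\T^d)$: by Tonelli and the translation invariance of Lebesgue measure on $\T^d$ (substitute $y=x+k\star\alpha$ in the $k$-th term),
\[
 \int_{\T^d}\Sigma(x)\,dx=\sum_k\int_{\T^d}\chi(y)\,|\psi(y;-k)|\,dy=\int_{\T^d}\chi(y)\,\|\psi(y)\|_{\ell^1(\Z^d)}\,dy\le 2
\]
by (vi) and (iv). In particular $\Sigma(x)<\infty$ for a.e.\ $x$, which is exactly what makes $Qg(x)=\sum_k q_k(x)g(x+k\star\alpha)$ converge absolutely a.e.\ for $g\in L^\infty(\T^d)$.

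For the norm bound I would work with $g\in L^\infty$ (a dense subspace of $L^2$) and apply Cauchy--Schwarz in $k$ with weights $|q_k(x)|$, giving $|Qg(x)|^2\le\Sigma(x)\sum_k|q_k(x)|\,|g(x+k\star\alpha)|^2$. Assuming $\Sigma\in L^\infty(\T^d)$, integrating, using Tonelli, and substituting $x\mapsto x-k\star\alpha$ in each summand yields
\[
 \|Qg\|_{L^2}^2\le\|\Sigma\|_{L^\infty}\int_{\T^d}|g(x)|^2\sum_k|q_k(x-k\star\alpha)|\,dx=\|\Sigma\|_{L^\infty}\int_{\T^d}|g(x)|^2\,\chi(x)\,\|\psi(x)\|_{\ell^1(\Z^d)}\,dx\le 2\|\Sigma\|_{L^\infty}\|g\|_{L^2}^2,
\]
where I used $q_k(x-k\star\alpha)=\chi(x)\psi(x;-k)$ and (vi) once more. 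Boundedness of $Q$ then follows by density. (Once boundedness is known, the same substitution together with the orthonormality in Proposition~\ref{prop:evfunc}(vii) shows that $Q^*Q$ is multiplication by $\chi_{G\cap G_1}$, so in fact $\|Q\|\le 1$; but only boundedness is needed here.)

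The one point requiring more than (vi)--(vii) is $\Sigma\in L^\infty$, and this I regard as the heart of the matter. I would get it from the rapid convergence in Theorem~\ref{thm:stepA}(iv): since $\psi(x)$ arises as an $\ell^2$-limit of eigenfunctions $\psi_j(x)$ of the cube-restricted operators $\widehat H^{\Lambda_{R_j}(0)}_{\lambda,\alpha,x}$ — hence supported in $\Lambda_{R_j}(0)$ — with $\|\psi(x)-\psi_j(x)\|\le\sum_{i>j}\delta_i^3\le 2\delta_{j+1}^3$ uniformly in $x$, one gets $\sup_x|\psi(x;n)|\le 2\delta_{j+1}^3=2\lambda^{3/20}\,\E^{-3R_j^{1/2}}$ whenever $|n|>R_j$; choosing $j$ maximal with $R_j<|n|$, so that $R_j\ge|n|^{1/10}$ by $R_{j+1}=R_j^{10}$ (and handling $|n|\le R_1$ trivially via $\sup_x|\psi(x;n)|\le1$), this gives $\sup_x|\psi(x;n)|\le C\,\E^{-3|n|^{1/20}}$ for all $n$. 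Hence $\|q_k\|_{L^\infty}=\sup_{y\in G\cap G_1}|\psi(y;-k)|\le C\,\E^{-3|k|^{1/20}}$, which is summable over $k\in\Z^d$, so $\Sigma(x)\le C\sum_k\E^{-3|k|^{1/20}}<\infty$ uniformly in $x$. (With this decay one could skip the previous paragraph altogether: $Q=\sum_k M_{q_k}S_{k\star\alpha}$, where $M_{q_k}$ is multiplication by $q_k$ and $S_{k\star\alpha}g=g(\,\cdot+k\star\alpha)$, is then a sum convergent in operator norm of operators of norm $\|q_k\|_{L^\infty}$.) Thus the only real obstacle is the passage from the $\ell^2$-smallness $\|\psi-\psi_j\|\le2\delta_{j+1}^3$ together with the finite support of $\psi_j$ to a genuine $x$-uniform decay rate for $\psi(x;n)$ in $n$; the remainder is the elementary change-of-variables bookkeeping above, and any uniform bound $\sup_x|\psi(x;n)|\le c_n$ with $(c_n)\in\ell^1(\Z^d)$ would suffice in its place.
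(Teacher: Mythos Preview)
Your argument is correct, but it is considerably more involved than what the paper does. The paper simply expands
\[
 \|Qg\|_{L^2}^2=\int_{\T^d}\sum_{n\in\Z^d}\overline{g_G(y)}\,g_G(y+n\star\alpha)\,\Bigl(\sum_{k\in\Z^d}\overline{\psi(y;-k)}\,\psi(y+n\star\alpha;-n-k)\Bigr)\,dy,
\]
and observes that the inner sum is $\langle\psi_0(y),\psi_{-n}(y)\rangle$ in the notation of Proposition~\ref{prop:evfunc}(vii), hence equals $\delta_{n,0}$ by the orthonormality proved there; thus $\|Qg\|_{L^2}^2=\|\chi_{G\cap G_1}g\|_{L^2}^2\le\|g\|_{L^2}^2$ in one step. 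Your route instead avoids (vii) entirely and extracts from Theorem~\ref{thm:stepA} a uniform-in-$x$ pointwise decay $|\psi(x;n)|\le C\,\E^{-3|n|^{1/20}}$, from which $\sum_k\|q_k\|_{L^\infty}<\infty$ and a Schur-type bound give boundedness. The paper's approach is shorter and immediately yields the sharp $\|Q\|\le 1$; yours is more robust in that it does not rely on the orthonormality in (vii) (whose proof is the most delicate part of Proposition~\ref{prop:evfunc}), and along the way it justifies absolute convergence of the series defining $Qg$, a point the paper glosses over.
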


\begin{proof}
 Let $g_G=\chi_G g$. Then we have that
 \[
  \|Qg\|_{L^{2}(\T^d)}^2 = \int_{\T^d}\sum_{n\in\Z^d} \ol{g_G(y)} g_{G}(y+n\star\alpha)
   \cdot \sum_{k\in\Z^d} \ol{\psi(y;-k)}\psi(y+n\star\alpha;-n-k)dy.
 \]
 By Proposition~\ref{prop:evfunc} (iv), we have that the sum over $k$
 is equal $1$ if $n=0$ and equal to $0$ otherwise. The claim follows.
\end{proof}

One can now formally compute the adjoint of $Q$ to be
\be
 Q^{\ast} g(x)=\sum_{k\in\Z^d} \chi_{G}(x)\ol{\psi(x;-k)}g(x-k\star\alpha).
\ee
A quick computation shows that $Q^{\ast} Q=Q Q^{\ast} = \chi_{G}$. In
particular, we have that $\|Q\|=1$.
We are now ready for

\begin{proof}[Proof of Theorem~\ref{thm:main1}]
 The previous lemma shows that $Q$ conjugates the restriction 
 $\widehat{H}_{\lambda,\alpha}^{G\cap G_1}$ to $L^2(G\cap G_1)$ to the
 multiplication operator by $\gamma$. Hence, it suffices
 to prove that for $E\in\R$ and $s>0$, we have that
 \[
 |\{x\in G:\quad \gamma(x)\in [E-s,E+s]\}|
   \lesssim \frac{s}{\delta}.
 \]
 This follows by the first part of Proposition~\ref{prop:evfunc}.
\end{proof}

%
%
%

\section{Niceness of the eigenvalue parametrisation:
 Proof of Proposition~\ref{prop:evfunc}}

In order to prove Proposition~\ref{prop:evfunc}, we will need
to reformulate the conclusions of Theorem~\ref{thm:stepA}
for fixed $\alpha\in\mathcal{A}$. There exists a sequence
of sets $G_j\subseteq\T^{d}$, functions $E_j: G_j\to\R$, and
$\psi_j: G_j\to\ell^2(\Lambda_{R_j}(0))$ with the following
properties
\begin{enumerate}
 \item $G_{j}\subseteq G_{j-1}$, $|G_{j}|\geq 1-\eps^{\frac{1}{2}}$.
 \item For $x\in G_j$, we have $E_j(x)\in\sigma(\widehat{H}_{\lambda,\alpha,x}
  ^{\Lambda_{R_j}(0)})$ is $\delta_j$ simple.
 \item For $x\in G_j$, $\widehat{H}_{\lambda,\alpha,x}^{\Lambda_{R_j}(0)}\psi_j(x)=E_j(x) \psi_j(x)$.
 \item For $x\in G_j$, $\|\psi_j(x)\|=1$.
 \item For $x\in G_{j}$, $|E_{j}(x)-E_{j-1}(x)|\leq (\delta_{j})^{10}$ 
  and $\|\psi_{j}(x)-\psi_{j-1}(x)\| \leq(\delta_j)^3$.
\end{enumerate}

We begin by understanding the limit of the functions $\psi_j(x)$.

\begin{lemma}
 For $x\in G=\bigcap_{j\geq 1} G_j$ with $|G|\geq 1-\eps^{\frac{1}{2}}$,
 there exists $\psi(x)\in\ell^2(\Z^d)$ such that $\psi$ solves
 $\widehat{H}_{\lambda,\alpha,x}\psi(x)=E(x)\psi(x)$ for some $E(x)$.
 We have that $\psi_j\to\psi$ in $\ell^2(\Z^d)$ and
 \be
  \|\psi(x)\|_{\ell^1(\Z^d)}\leq 2.
 \ee
\end{lemma}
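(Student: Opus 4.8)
The plan is to produce $\psi(x)$ and $E(x)$ as $\ell^2$-limits of the $\psi_j(x)$ and $E_j(x)$, check that the limiting pair actually solves the eigenvalue equation for the \emph{full} operator $\widehat{H}_{\lambda,\alpha,x}$ (not merely the box restriction), and finally extract the $\ell^1$-bound from a telescoping argument in which the increments $\psi_j(x)-\psi_{j-1}(x)$ live in cubes whose cardinality is negligible against $(\delta_j)^{-3}$.

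First, since the $G_j$ are nested with $|G_j|\geq 1-\eps^{1/2}$, their intersection $G$ satisfies $|G|\geq 1-\eps^{1/2}$. Fix $x\in G$ and regard each $\psi_j(x)\in\ell^2(\Lambda_{R_j}(0))$ as an element of $\ell^2(\Z^d)$ by extension by zero. By property~(5) (together with the $j=0$ conventions $\psi_{-1}=\delta_0$, $E_{-1}=\sum_{j}2\cos(2\pi x_j)$), one has $\|\psi_j(x)-\psi_{j-1}(x)\|\leq(\delta_j)^3$ and $|E_j(x)-E_{j-1}(x)|\leq(\delta_j)^{10}$; since $\delta_1$ is a fixed power of $\lambda$ and $\delta_j=\lambda^{1/20}\E^{-(R_{j-1})^{1/2}}$ decays super-exponentially for $j\geq 2$, both $\sum_j(\delta_j)^3$ and $\sum_j(\delta_j)^{10}$ converge. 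Hence $(\psi_j(x))_j$ is Cauchy in $\ell^2(\Z^d)$, converging to some $\psi(x)$ with $\|\psi(x)\|=\lim_j\|\psi_j(x)\|=1$, and $(E_j(x))_j$ converges to some $E(x)\in\R$.

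Next I would verify $\widehat{H}_{\lambda,\alpha,x}\psi(x)=E(x)\psi(x)$. The operator $\widehat{H}_{\lambda,\alpha,x}=\lambda T+W_{\alpha,x}$ is bounded on $\ell^2(\Z^d)$, since $\|W_{\alpha,x}\|\leq 2d$ and $\|T\|\leq\sum_{k\in\Z^d}\E^{-\eta|k|}<\infty$, so $\widehat{H}_{\lambda,\alpha,x}\psi_j(x)\to\widehat{H}_{\lambda,\alpha,x}\psi(x)$. Writing $P_j$ for the orthogonal projection onto $\ell^2(\Lambda_{R_j}(0))$ and using $\psi_j(x)=P_j\psi_j(x)$ with property~(3), one has $P_j\widehat{H}_{\lambda,\alpha,x}\psi_j(x)=\widehat{H}^{\Lambda_{R_j}(0)}_{\lambda,\alpha,x}\psi_j(x)=E_j(x)\psi_j(x)$, hence
\[
 \widehat{H}_{\lambda,\alpha,x}\psi_j(x)=E_j(x)\psi_j(x)+(1-P_j)\widehat{H}_{\lambda,\alpha,x}\psi_j(x).
\]
Because $W_{\alpha,x}$ is diagonal, $(1-P_j)\widehat{H}_{\lambda,\alpha,x}\psi_j(x)=\lambda(1-P_j)T\psi_j(x)$, and this tends to $0$: split it as $\lambda(1-P_j)T(\psi_j(x)-\psi(x))+\lambda(1-P_j)T\psi(x)$, the first term bounded by $\lambda\|T\|\,\|\psi_j(x)-\psi(x)\|\to0$ and the second tending to $0$ since $T\psi(x)\in\ell^2(\Z^d)$ and $(1-P_j)\to0$ strongly. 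Letting $j\to\infty$ yields the eigenvalue equation, and $\|\psi(x)\|=1\neq0$ makes it a genuine eigenpair.

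For the $\ell^1$-bound, note $\psi_N(x)=\psi_{-1}+\sum_{j=0}^{N}\big(\psi_j(x)-\psi_{j-1}(x)\big)$ where $\psi_{-1}=\delta_0$ is the Kronecker delta at the origin, so $\|\psi_{-1}\|_{\ell^1(\Z^d)}=1$. Each increment $\psi_j(x)-\psi_{j-1}(x)$ is supported in $\Lambda_{R_j}(0)$, a cube with $(2R_j+1)^d$ sites, so Cauchy--Schwarz and property~(5) give $\|\psi_j(x)-\psi_{j-1}(x)\|_{\ell^1(\Z^d)}\leq(2R_j+1)^{d/2}(\delta_j)^3$. For $j\geq 2$ this is $\lesssim(R_{j-1})^{5d}\E^{-3(R_{j-1})^{1/2}}$, and since $R_1$ is large and $R_j=(R_{j-1})^{10}$ grows tower-like, this tail sum is as small as desired; the finitely many remaining increments (the smallest $j$, where $R_j$ is controlled by $R_1$) have $\ell^1$-norm $O_{R_1}(\lambda^{c})$ for some fixed $c>0$, hence total $<\frac12$ once $\lambda_1$ is chosen small \emph{after} $R_1$ is fixed. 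Thus $\sup_N\|\psi_N(x)\|_{\ell^1(\Z^d)}\leq 1+\frac12+(\text{small})\leq 2$, and since $\psi_N(x)\to\psi(x)$ coordinatewise, Fatou's lemma for the counting measure on $\Z^d$ gives $\|\psi(x)\|_{\ell^1(\Z^d)}\leq 2$. The main obstacle is exactly this last step: one must invoke the parameter relations of Theorem~\ref{thm:stepA} in the right order — first fix $R_1$ large so that the super-exponentially small tail is tiny, then shrink $\lambda_1$ so the leftover $O(\lambda^c)$ contributions are below $\frac12$, balancing the polynomial growth of $|\Lambda_{R_j}(0)|$ against the super-exponential smallness of $\delta_j$; the eigenvalue-equation step is routine once the boundary-leakage term $(1-P_j)T\psi_j(x)$ is isolated.
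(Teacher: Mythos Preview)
Your argument is correct and follows essentially the same route as the paper: Cauchy in $\ell^2$ from $\sum(\delta_j)^3<\infty$, then the $\ell^1$-bound via Cauchy--Schwarz on the cube $\Lambda_{R_j}(0)$ giving $\|\psi_j-\psi_{j-1}\|_{\ell^1}\leq(3R_j)^{d/2}(\delta_j)^3\leq(\delta_j)^2$ and a telescoping sum starting from $\|\delta_0\|_{\ell^1}=1$. Your treatment of the boundary-leakage term $(1-P_j)T\psi_j$ in the eigenvalue-equation step is more explicit than the paper's (which simply asserts that $\psi$ solves the limit equation), and your use of Fatou in place of the paper's ``Cauchy in $\ell^1$'' observation is an inessential variant.
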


\begin{proof}
 By (v) the $\psi_j(x)$ form a Cauchy sequence. Hence $\psi(x)$ exists. By
 continuity of the norm $\|\psi(x)\|=1$. Finally, $\psi(x)$ solves 
 the eigenvalue problem for $E(x)=\lim_{j\to\infty} E_j(x)$.

 As $\psi_{j}(x)$ is supported in a set containing less than $(3R_j)^d$
 many elements, we have
 \[
  \|\psi_{j}(x)-\psi_{j-1}(x)\|_{\ell^1(\Z^d)} \leq (3 R_j)^{\frac{d}{2}}
  (\delta_j)^3 \leq (\delta_j)^2.
 \]
 Hence, the $\psi_j(x)$ are also Cauchy in $\ell^1(\Z^d)$ and thus
 also is $\psi(x)$. Finally, we have
 \[
  \|\psi(x)\|_{\ell^1(\Z^d)} \leq 1 + \sum_{j=0}^{\infty} \|\psi_{j}(x)-\psi_{j-1}(x)\|_{\ell^1(\Z^d)}
   \leq 1 + 2\lambda^{\frac{1}{10}} \leq 2.
 \]
 as $\|\psi_{-1}\|_{\ell^1(\Z^d)}=1$.
\end{proof}

The next step in our analysis will be to understand the derivative
of the functions $E_{j}(x)$ for $x\in G_{j}$. The next lemma
implies in particular, that $\nabla E_{j}(x)$ makes sense.

\begin{lemma}\label{lem:evanal}
 Let $x\in G_{j}$ and $U=B_{(\delta_j)^2}(x)$. Then there exists
 an analytic function $f: U\to \R$ such that
 \begin{enumerate}
  \item $f(x)= E_j(x)$.
  \item For $y\in U$, $f(y)$ is a $\frac{1}{2}\delta_j$ simple
   eigenvalue of $\widehat{H}_{\lambda,\alpha,y}^{\Lambda_{R_j}(0)}$.
  \item For $y\in U$, $|f(y) - E_j(x)|\leq C |y-x|$ for some $j$
   independent $C>0$.
 \end{enumerate}
\end{lemma}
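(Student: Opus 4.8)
The plan is to realize $f$ as an eigenvalue branch obtained from analytic perturbation theory applied to the family $y\mapsto \widehat H_{\lambda,\alpha,y}^{\Lambda_{R_j}(0)}$, which is a finite-dimensional self-adjoint matrix whose entries depend analytically on $y\in\T^d$ (the only $y$-dependence is through the diagonal multiplication term $\sum_{j} 2\cos(2\pi(y_j+n_j\alpha_j))$, which is real-analytic and bounded together with all its derivatives by a constant depending only on $d$). First I would fix $x\in G_j$ and use that $E_j(x)$ is $\delta_j$-simple, i.e. $\tr(P_{[E_j(x)-\delta_j,E_j(x)+\delta_j]})=1$, so that the spectral subspace associated to the interval $[E_j(x)-\delta_j,E_j(x)+\delta_j]$ is one-dimensional and the rest of the spectrum is at distance $\geq\delta_j$ from $E_j(x)$. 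The standard Riesz-projection construction then gives, for $y$ in a neighborhood of $x$, a projection $P(y)=\frac{1}{2\pi\I}\oint_{\Gamma} (z-\widehat H_{\lambda,\alpha,y}^{\Lambda_{R_j}(0)})^{-1}\,dz$ over a fixed contour $\Gamma$ encircling $E_j(x)$ at radius $\tfrac{1}{2}\delta_j$; $P(y)$ is analytic in $y$ and has rank $1$ as long as the perturbation $\|\widehat H_{\lambda,\alpha,y}^{\Lambda_{R_j}(0)}-\widehat H_{\lambda,\alpha,x}^{\Lambda_{R_j}(0)}\|$ stays below $\tfrac{1}{4}\delta_j$, say. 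Defining $f(y)=\tr(\widehat H_{\lambda,\alpha,y}^{\Lambda_{R_j}(0)}P(y))$ gives the unique eigenvalue in the contour, and it is analytic in $y$ as a composition of analytic maps; this is (i) and the rank-one statement feeds into (ii).

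The key quantitative point is the size of the neighborhood $U=B_{(\delta_j)^2}(x)$. I would bound the perturbation by estimating the Lipschitz constant of $y\mapsto \widehat H_{\lambda,\alpha,y}^{\Lambda_{R_j}(0)}$ in operator norm: since the off-diagonal part $\lambda T$ is $y$-independent and the diagonal part is $\sum_{j=1}^d 2\cos(2\pi(y_j+n_j\alpha_j))$, the difference of the two operators is diagonal with entries bounded by $\sum_j 4\pi |y_j-x_j|\le 4\pi d\,|y-x|_\infty$, hence of operator norm $\le 4\pi d\,|y-x|$. So for $|y-x|\le (\delta_j)^2$ the perturbation is at most $4\pi d (\delta_j)^2$, which is far smaller than $\tfrac14\delta_j$ once $\delta_j$ is small (equivalently $\lambda<\lambda_1$ small, using $\delta_j\le\lambda^{1/20}$). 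Then standard perturbation bounds give that the contour at radius $\tfrac12\delta_j$ still separates $f(y)$ from the rest of $\sigma(\widehat H_{\lambda,\alpha,y}^{\Lambda_{R_j}(0)})$, and in fact $f(y)$ is the only eigenvalue within $\tfrac12\delta_j$ of it, i.e. $f(y)$ is $\tfrac12\delta_j$-simple — this is (ii). For (iii), $|f(y)-E_j(x)|=|f(y)-f(x)|\le \sup_U|\nabla f|\cdot|y-x|$, and the derivative bound follows from Cauchy estimates for $f$ on a slightly larger complex polydisc (the construction extends to complex $y$ since $\cos$ is entire) together with the uniform bound $\|\widehat H_{\lambda,\alpha,y}^{\Lambda_{R_j}(0)}\|\le 2d+\lambda\|T\|$; this yields a constant $C$ depending only on $d$ and $\lambda$, in particular independent of $j$, since the perturbation estimate and the operator norm bound are uniform in $j$.

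The main obstacle I anticipate is bookkeeping the dependence of constants on $j$: one must make sure the contour radius $\tfrac12\delta_j$, the neighborhood radius $(\delta_j)^2$, and the Cauchy-estimate radius all scale consistently so that $C$ in (iii) comes out $j$-independent. The resolution is exactly the computation above — the Lipschitz constant $4\pi d$ of the operator family is $j$-independent, and $(\delta_j)^2\ll \delta_j$ with plenty of room, so a fixed complex neighborhood of radius comparable to $(\delta_j)^2$ works and the Cauchy estimate on it gives $|\nabla f|\lesssim \|\widehat H^{\Lambda_{R_j}(0)}_{\lambda,\alpha,\cdot}\|_{C^0}/(\delta_j)^2$; but a sharper argument avoids the $(\delta_j)^{-2}$ blow-up by noting $f$ is actually the eigenvalue branch of a matrix family with $j$-independent Lipschitz (indeed real-analytic with $j$-independent $C^1$ bound), so by first-order eigenvalue perturbation $|\nabla f(y)|\le \|\partial_y \widehat H^{\Lambda_{R_j}(0)}_{\lambda,\alpha,y}\|\le 4\pi d$ wherever the eigenvalue stays simple, giving $C=4\pi d$ directly. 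I would present the argument in that cleaner order: establish rank-one-ness and $\tfrac12\delta_j$-simplicity on $U$ first, deduce analyticity and the Feynman–Hellmann formula $\nabla f(y)=\langle\psi_y,(\partial_y\widehat H^{\Lambda_{R_j}(0)}_{\lambda,\alpha,y})\psi_y\rangle$ for the normalized eigenvector $\psi_y$, and read off (iii) with $C=4\pi d$.
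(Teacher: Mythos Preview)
Your proposal is correct and follows essentially the same route as the paper: bound $\|\widehat H_{\lambda,\alpha,y}^{\Lambda_{R_j}(0)}-\widehat H_{\lambda,\alpha,x}^{\Lambda_{R_j}(0)}\|\le \|\nabla W\|_{L^\infty}\,|y-x|\le \|\nabla W\|_{L^\infty}\,(\delta_j)^2\le \tfrac14\delta_j$, invoke analytic perturbation of a simple eigenvalue, and read off (iii) with $C=\|\nabla W\|_{L^\infty}$. You are simply more explicit about the Riesz-projection construction and the Feynman--Hellmann identity than the paper, which compresses all of this into two sentences.
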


\begin{proof}
 For $y\in U$, we have
 \[
  \|\widehat{H}_{\lambda,\alpha,y}^{\Lambda_{R_j}(0)}-
   \widehat{H}_{\lambda,\alpha,x}^{\Lambda_{R_j}(0)}\| \leq \|\nabla W\|_{L^{\infty}(\T^d)} \delta_j^2
    \leq \frac{1}{4}\delta_{j},
 \]
 where $W(x)=\sum_{j=1}^{d} 2\cos(2\pi x_j)$.
 Define $f$ to be the analytic continuation of the necessarily
 simple eigenvalue $E_j(x)$. It is clear that (i) and (ii) hold.
 Furthermore, (iii) follows with $C=\|\nabla W\|_{L^{\infty}(\T^d)}$.
\end{proof}

\begin{lemma}
 There exists $\kappa > 0$ and a set $G^{\kappa}\subseteq\T^d$
 of measure $|G^{\kappa}|\geq 1-\eps^{\frac{1}{2}}$ such that
 for $x\in G^{\kappa}_j = G^{\kappa}\cap G_{j}$, we have
 \be
 |\nabla E_j(x)|
   \geq \kappa.
 \ee
\end{lemma}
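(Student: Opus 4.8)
The plan is to differentiate $E_j$ by first-order perturbation theory and then to use that the eigenfunction $\psi_j(x)$ is strongly concentrated at the origin, so that $\nabla E_j(x)$ is close to the gradient of the unperturbed eigenvalue $W(x)=\sum_{\ell=1}^{d}2\cos(2\pi x_\ell)$, which is $(-4\pi\sin(2\pi x_1),\dots,-4\pi\sin(2\pi x_d))$. By Lemma~\ref{lem:evanal}, for $x\in G_j$ the value $E_j(x)$ is a genuinely simple eigenvalue of the finite matrix $\widehat{H}_{\lambda,\alpha,y}^{\Lambda_{R_j}(0)}=\lambda T^{\Lambda_{R_j}(0)}+W_{\alpha,y}^{\Lambda_{R_j}(0)}$, depending real-analytically on $y$ near $y=x$, with normalized eigenvector $\psi_j(x)$ at $y=x$. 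Since $T$ is independent of $y$ and $W_{\alpha,y}$ is a multiplication operator, the Hellmann--Feynman formula gives, for $1\le\ell\le d$,
\be
 \partial_{x_\ell}E_j(x)=\left\langle\psi_j(x),\big(\partial_{x_\ell}W_{\alpha,x}^{\Lambda_{R_j}(0)}\big)\psi_j(x)\right\rangle
  =-4\pi\sum_{n\in\Lambda_{R_j}(0)}\sin\!\big(2\pi(x_\ell+n_\ell\alpha_\ell)\big)\,|\psi_j(x;n)|^2.
\ee

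Next I would make the concentration of $\psi_j$ quantitative. Telescoping the bounds $\|\psi_i(x)-\psi_{i-1}(x)\|_{\ell^2(\Z^d)}\le(\delta_i)^3$ with the convention $\psi_{-1}=\delta_0$, and noting that $\delta_1=\lambda^{1/20}$ while the $\delta_i$ decay super-exponentially for $i\ge2$, one obtains $\|\psi_j(x)-\delta_0\|_{\ell^2(\Z^d)}\le C\lambda^{1/10}$ for all $x\in G_j$ and a universal $C>0$, provided $\lambda$ is small. Hence $|\psi_j(x;0)|^2\ge1-2C\lambda^{1/10}$ and $\sum_{n\ne0}|\psi_j(x;n)|^2\le2C\lambda^{1/10}$. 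Isolating the $n=0$ term in the formula above and bounding the tail by $4\pi\sum_{n\ne0}|\psi_j(x;n)|^2$ yields $|\partial_{x_\ell}E_j(x)+4\pi\sin(2\pi x_\ell)|\le C'\lambda^{1/10}$ for every $\ell$, and consequently
\be
 |\nabla E_j(x)|\ge\max_{1\le\ell\le d}|\partial_{x_\ell}E_j(x)|\ge 4\pi\max_{1\le\ell\le d}|\sin(2\pi x_\ell)|-C'\lambda^{1/10}\qquad\text{for }x\in G_j.
\ee

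Finally I would set $G^{\kappa}=\{x\in\T^d:\ \max_{1\le\ell\le d}|\sin(2\pi x_\ell)|\ge\eps^{1/2}\}$. Each slab $\{x_\ell\in\T:\ |\sin(2\pi x_\ell)|<\eps^{1/2}\}$ has measure $\tfrac{2}{\pi}\arcsin(\eps^{1/2})\le\eps^{1/2}$ by convexity of $\arcsin$ on $[0,1]$, so $|\T^d\setminus G^{\kappa}|\le\eps^{d/2}\le\eps^{1/2}$ and therefore $|G^{\kappa}|\ge1-\eps^{1/2}$. For $x\in G^{\kappa}_j=G^{\kappa}\cap G_j$ the displayed inequality gives $|\nabla E_j(x)|\ge4\pi\eps^{1/2}-C'\lambda^{1/10}\ge2\pi\eps^{1/2}$, which holds as soon as $C'\lambda^{1/10}\le2\pi\eps^{1/2}$; this is guaranteed after shrinking $\lambda_1$ (depending on $\eps$). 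The lemma then holds with $\kappa=2\pi\eps^{1/2}$.

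The only step that really needs care is the Hellmann--Feynman computation, since it requires $E_j(x)$ to be an honest simple eigenvalue varying real-analytically in $x$; but that is exactly the content of Lemma~\ref{lem:evanal}, so there is no serious obstacle. The remaining points are bookkeeping: all constants ($\|\nabla W\|_{L^\infty(\T^d)}$, the telescoped series, and the definition of $G^{\kappa}$) are independent of $j$, and the only place where $\lambda$ must be taken small in terms of $\eps$ is the final estimate.
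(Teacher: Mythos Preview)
Your proof is correct and follows essentially the same line as the paper's: the Hellmann--Feynman formula (justified by Lemma~\ref{lem:evanal}), the telescoped bound $\|\psi_j(x)-\delta_0\|\lesssim\lambda^{1/10}$, and a set $G^{\kappa}$ on which $|\nabla W(x)|$ is bounded below. The only cosmetic differences are that the paper defines $G^{\kappa}=\{x:\ |\nabla\gamma_{-1}(x)|\ge 2\kappa\}$ with the Euclidean norm and picks $\kappa$ implicitly by continuity of $\kappa\mapsto|G^{\kappa}|$, whereas you use the $\ell^\infty$ norm and give an explicit $\kappa=2\pi\eps^{1/2}$, exploiting the product structure of the exceptional set to get $|\T^d\setminus G^{\kappa}|\le\eps^{d/2}$ rather than a union bound.
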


\begin{proof}
 Recall that $\gamma_{-1}(x)=\sum_{j=1}^{d} 2\cos(2\pi x_j)$ and
 define
 \[
  G^{\kappa} = \{x\in\T^d:\quad |\nabla\gamma_{-1}(x)|\geq 2\kappa\}
 \]
 we have $|G^{\kappa}|\to 0$ as $\kappa\to 0$. So we may choose $\kappa$
 such that $|G^{\kappa}|=1 -\eps^{\frac{1}{2}}$.

 By the previous lemma, we can extend $E_j$ to an analytic function
 in a small neighborhood of $x$. Standard perturbation theory then
 implies
 \[
  \nabla E_j(x)=
   \spr{\psi_j(x)}{\nabla \widehat{H}_{\lambda,\alpha,x}^{\Lambda_{R_j}(0)} \psi_{j}(x)}.
 \]
 For $x\in G^{\kappa}$, we clearly have that
 \[
 |\spr{\delta_0}{\nabla \widehat{H}_{\lambda,\alpha,x}^{\Lambda_R(0)} \delta_0}|
   \geq 2 \kappa.
 \]
 We have that
 \[
  \|\psi_{j}(x)-\delta_0\|\leq\sum_{\ell=0}^{j}\|\psi_{j}(x)-\psi_{j-1}(x)\|
   \leq \lambda^{\frac{1}{10}}.
 \]
 Then as
 \[
 |\spr{\psi_j(x)}{\nabla \widehat{H}_{\lambda,\alpha,x}^{\Lambda_{R_j}(0)} \psi_{j}(x)}-
   \spr{\delta_0}{\nabla \widehat{H}_{\lambda,\alpha,x}^{\Lambda_R(0)} \delta_0}|
    \leq 2 \|\nabla W\|_{L^\infty(\T^d)} \|\psi_{j}(x)-\delta_0\|
 \]
 and $\lambda\leq \kappa^{10}$, the claim follows.
\end{proof}

We will now start to construct the function $\gamma$ described
in Proposition~\ref{prop:evfunc}. To do so, we will construct an
extension $\gamma_j : G^{\kappa}\to\R$ of $E_j: G^{\kappa}_j\to\R$.
This extension should have the properties
\begin{enumerate}
 \item $\gamma_j(x)=E_j(x)$ for $x\in G^{\kappa}_j$.
 \item $|\gamma_j(x)-\gamma_{j-1}(x)|\leq (\delta_j)^{10}$ for $x\in G^{\kappa}$.
 \item $|\nabla\gamma_j(x)|\geq\kappa - \delta_{1} - \dots - \delta_{j}$.
\end{enumerate}
These properties guarantee that (i) through (v) of Proposition~\ref{prop:evfunc}
hold as we have already observed that the eigenfunctions converge. Also (vi) holds,
we will prove (vii) at the end of this section. Let us now explain
how to construct $\gamma_j$. First, it is clear that the claim holds
for $\gamma_{-1}$, so we only have to construct $\gamma_j$ given $\gamma_{j-1}$.

Define a function $\varphi: G^{\kappa}_{j}\to\R$ by
\be
 \varphi(x)=\gamma_{j-1}(x)- E_j(x).
\ee
We clearly have that $|\varphi(x)|\leq (\delta_j)^{10}$ In order to prove
the claim, it suffices to prove that there exists an extension
of $\varphi$ to $G^{\kappa}$ satisfying $|\nabla\varphi|\leq \delta_j$.
Let us now recall the conclusions of Lemma~\ref{lem:evanal}.
For $x\in G^{\kappa}_{j}$, we can find a function $g_x: U\to\R$
where $U=B_{(\delta_j)^2}$ such that $\varphi(x)=g_x(x)$ and
$|\nabla g_x|\leq C$. 

\begin{lemma}
 For $y\in G_{j}^{\kappa}\cap U$, we have $g_x(y)=\varphi(y)$.
\end{lemma}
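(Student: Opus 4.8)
The plan is to unwind the definition of $g_x$, reduce the claim to the statement that the analytic continuation $f$ supplied by Lemma~\ref{lem:evanal} picks out the distinguished eigenvalue $E_j(y)$ at every nearby good point $y$, and then prove this by a $\delta_j$-simplicity argument whose only quantitative input is the (inductively available) Lipschitz bound on $\gamma_{j-1}$. Concretely, writing $f\colon U\to\R$ for the analytic function from Lemma~\ref{lem:evanal} — so $f(x)=E_j(x)$, each $f(z)$ is a $\tfrac12\delta_j$-simple eigenvalue of $\widehat H_{\lambda,\alpha,z}^{\Lambda_{R_j}(0)}$, and $|f(z)-E_j(x)|\le C|z-x|$ — the function in question is $g_x(z)=\gamma_{j-1}(z)-f(z)$, which has $g_x(x)=\gamma_{j-1}(x)-E_j(x)=\varphi(x)$ and $|\nabla g_x|\le C'$ since $\gamma_{j-1}$ is Lipschitz. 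As $\varphi(y)=\gamma_{j-1}(y)-E_j(y)$ for $y\in G_j^\kappa$, the identity $g_x(y)=\varphi(y)$ is \emph{equivalent} to
\[
 f(y)=E_j(y),\qquad y\in G_j^\kappa\cap U,
\]
so it is this equality that I would prove.

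Fix such a $y$ and set $A=\widehat H_{\lambda,\alpha,y}^{\Lambda_{R_j}(0)}$. By Lemma~\ref{lem:evanal}(ii), $f(y)$ is a $\tfrac12\delta_j$-simple eigenvalue of $A$; by (ii) of the reformulated conclusions of Theorem~\ref{thm:stepA}, $E_j(y)$ is a $\delta_j$-simple eigenvalue of $A$. It suffices to show $|f(y)-E_j(y)|\le\tfrac12\delta_j$: then $E_j(y)$ lies in $I=[f(y)-\tfrac12\delta_j,\,f(y)+\tfrac12\delta_j]$, on which $\tr(P_I(A))=1$, and since $f(y)$ already accounts for this unit of trace, the eigenvalue $E_j(y)$ must coincide with $f(y)$.

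To get $|f(y)-E_j(y)|\le\tfrac12\delta_j$ I split
\[
 |f(y)-E_j(y)|\le|f(y)-E_j(x)|+|E_j(x)-E_j(y)|.
\]
The first term is $\le C|y-x|\le C(\delta_j)^2$ by Lemma~\ref{lem:evanal}(iii). For the second, $x,y\in G_j^\kappa\subseteq G_{j-1}^\kappa$ and $|x-y|<(\delta_j)^2$, so the Lipschitz bound for $\gamma_{j-1}$ (which equals $E_{j-1}$ on $G_{j-1}^\kappa$) gives $|E_{j-1}(x)-E_{j-1}(y)|\le C'(\delta_j)^2$, and combining with property (v), $|E_j(x)-E_j(y)|\le 2(\delta_j)^{10}+C'(\delta_j)^2$. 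Hence $|f(y)-E_j(y)|\le(C+C')(\delta_j)^2+2(\delta_j)^{10}\le\tfrac12\delta_j$, the last step because $\delta_j\le\delta_1=\lambda^{1/20}$ and $\lambda_1$ is taken small — a smallness condition uniform in $j$. Feeding the conclusion back, one gets $|E_j(x)-E_j(y)|=|f(x)-f(y)|\le C|x-y|$ for $x,y\in G_j^\kappa$ with $|x-y|<(\delta_j)^2$, i.e. exactly the Lipschitz bound needed to run the next stage; the base case $j=0$ is immediate since $\gamma_{-1}=W$ is analytic and $C$-Lipschitz.

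The only real obstacle is the apparent circularity — bounding $|E_j(x)-E_j(y)|$ seems to require the conclusion itself. It is broken by comparing $E_j$ with $E_{j-1}$ rather than with $W$: the discrepancy $|E_j-E_{j-1}|\le(\delta_j)^{10}$ is negligible against $\delta_j$, whereas $|E_j-W|$ is only $O(\lambda^{1/2})$, which is \emph{larger} than $\delta_j$ for large $j$; and $\gamma_{j-1}$ (hence $E_{j-1}$ on $G_{j-1}^\kappa$) is already known to be Lipschitz with a fixed constant. All error terms are then of size $(\delta_j)^2$ or smaller, hence $\ll\delta_j$ uniformly in $j$, which is what makes a single $\lambda_1$ work for the whole construction.
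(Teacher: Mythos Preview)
Your proposal is correct and follows essentially the same route as the paper. Both arguments reduce the claim to $f(y)=E_j(y)$, bound $|f(y)-E_j(y)|$ by $O((\delta_j)^2)$ using the Lipschitz control on $\gamma_{j-1}$ (the paper packages this into $|g_x(y)|\le(\delta_j)^{10}+C(\delta_j)^2$, you route it through $E_j(x)$ and $E_{j-1}$), and then invoke the simplicity gap to force equality; you use the $\tfrac12\delta_j$-simplicity of $f(y)$ from Lemma~\ref{lem:evanal}(ii) where the paper cites the $\delta_j$-simplicity of $E_j(y)$, but either works.
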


\begin{proof}
 We clearly have that $|g_x(y)|\leq \delta_j^{10} + C (\delta_j)^{2}
 \leq \delta_j^{\frac{3}{2}}$, as $E_j(x)$ is $\delta_j$ simple the
 claim follows.
\end{proof}

The following lemma now guarantees the existence of $\varphi$.

\begin{lemma}
 Let $A\subseteq\R^d$ be a set and $f:A\to\R$ a function
 such that
 \begin{enumerate}
  \item For $x\in A$, $|f(x)|\leq \eps$.
  \item For $x\in A$, there exists $g_x: B_{\delta}(x)\to\R$
   such that $g_x(y)=f(y)$ for $y\in A\cap B_{\delta}(x)$
   and $|g_{x}(y)- f(x)|\leq C |x-y|$.
 \end{enumerate}
 Then there exists $F: \R^d\to\R$ such that $f(x)=F(x)$
 for $x\in A$ and $|\nabla F(x)| \leq C\frac{\eps}{\delta}$.
\end{lemma}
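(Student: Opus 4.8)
The plan is to build $F$ by a partition-of-unity gluing of the local pieces $g_x$, followed by a global truncation that does not increase the gradient.

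First I would fix a maximal $\delta/2$-separated set $\{x_i\}_{i\in I}\subseteq A$, so that the balls $B_{\delta/2}(x_i)$ cover $A$ while the balls $B_{\delta/4}(x_i)$ are pairwise disjoint; a standard volume-packing argument then bounds the overlap multiplicity of the cover $\{B_{\delta}(x_i)\}$ by a dimensional constant $N=N(d)$. Let $\{\chi_i\}$ be a smooth partition of unity subordinate to $\{B_{\delta}(x_i)\}$ with $\chi_i\geq 0$, $\sum_i\chi_i\equiv 1$ on a neighborhood of $\bigcup_i B_{\delta/2}(x_i)\supseteq A$, and $|\nabla\chi_i|\lesssim 1/\delta$. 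Define on that neighborhood
\be
 \wti F(y)=\sum_{i\in I}\chi_i(y)\,g_{x_i}(y).
\ee
By (ii), for $y\in A$ and any $i$ with $\chi_i(y)\neq 0$ we have $y\in A\cap B_{\delta}(x_i)$, hence $g_{x_i}(y)=f(y)$, and so $\wti F(y)=f(y)\sum_i\chi_i(y)=f(y)$; this gives the interpolation property.

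The gradient bound is the heart of the argument. Writing $\nabla\wti F=\sum_i(\nabla\chi_i)\,g_{x_i}+\sum_i\chi_i\,\nabla g_{x_i}$, the second sum is bounded by $\max_i|\nabla g_{x_i}|\leq C$. For the first sum, the key trick is that $\sum_i\nabla\chi_i\equiv 0$ where the $\chi_i$ sum to a constant, so for any fixed reference point $p$ in the overlap region we may replace $g_{x_i}(y)$ by $g_{x_i}(y)-g_{x_p}(p)$ without changing the value. Choosing $p=y$ and using that at a point $y$ only the $\lesssim N$ indices $i$ with $y\in B_\delta(x_i)$ contribute, while for each such $i$ one has $|g_{x_i}(y)-f(x_i)|\leq C\delta$ and $|f(x_i)|\leq\eps$, hence $|g_{x_i}(y)|\leq\eps+C\delta\lesssim\eps$ (after shrinking $\delta$ if necessary, or more carefully carrying both terms), we get $|\sum_i(\nabla\chi_i(y))g_{x_i}(y)|\lesssim N\cdot(1/\delta)\cdot\eps\lesssim \eps/\delta$. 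Combining, $|\nabla\wti F|\lesssim C\eps/\delta$ on a neighborhood of $A$, and $|\wti F|\lesssim\eps$ there as well.

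Finally I would extend to all of $\R^d$: pick a cutoff $\theta\in C^\infty(\R^d)$ with $\theta\equiv 1$ on the $\delta/4$-neighborhood of $A$, $\theta\equiv 0$ outside the $\delta/2$-neighborhood, $|\nabla\theta|\lesssim 1/\delta$, and set $F=\theta\wti F$. Then $F=f$ on $A$, and $|\nabla F|\leq|\nabla\theta|\,|\wti F|+|\theta|\,|\nabla\wti F|\lesssim (1/\delta)\eps+C\eps/\delta\lesssim C\eps/\delta$. The main obstacle — and the place where one has to be slightly careful rather than routine — is the gradient estimate for the first sum: one must exploit the cancellation $\sum_i\nabla\chi_i=0$ together with the bounded overlap of the cover, since naively bounding $|g_{x_i}(y)|$ by $\eps$ and summing over all $i$ would lose the finiteness. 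Everything else (existence of the separated set, the partition of unity, the cutoff) is standard.
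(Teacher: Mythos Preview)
Your approach is correct and genuinely different from the paper's. The paper first extends $f$ to the $\delta/3$-neighborhood of $A$ by setting it equal to the local pieces $g_x$, extends by zero to get $f_1$, and then applies a variable-radius mollification $F(x)=\int\eta_{s(x)}(x-y)f_1(y)\,dy$ with $s(x)=\min(\dist(x,A),\delta/6)$; the gradient bound for $\dist(x,A)\geq\delta/6$ comes from $\|f_1\|_\infty\|\nabla\eta_{s}\|_{L^1}\lesssim\eps/\delta$, and for $\dist(x,A)<\delta/6$ from $\|\nabla f_1\|_\infty$. Your partition-of-unity gluing is more elementary and makes the dependence on the overlap multiplicity explicit, while the mollification route produces a $C^\infty$ extension without having to choose a net.

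One point in your argument is muddled and worth cleaning up. The cancellation $\sum_i\nabla\chi_i=0$ is not what controls the first sum here: subtracting a common constant $c$ replaces $|g_{x_i}(y)|$ by $|g_{x_i}(y)-c|$, but the best available bound on either quantity is still $\eps+C\delta$ (from $|g_{x_i}(y)-f(x_i)|\leq C\delta$ and $|f(x_i)|\leq\eps$), so the cancellation gains nothing. The finiteness comes purely from bounded overlap: at each $y$ only $\leq N(d)$ indices contribute, giving
\[
 \Bigl|\sum_i(\nabla\chi_i)(y)\,g_{x_i}(y)\Bigr|\ \lesssim\ N\cdot\delta^{-1}\cdot(\eps+C\delta)\ \lesssim\ \eps/\delta+C.
\]
Together with the second sum ($\leq C$) and the cutoff step, you obtain $|\nabla F|\lesssim C+\eps/\delta$, which is also what the paper's argument actually yields (its near-$A$ estimate is $\|\nabla f_1\|_\infty\lesssim C$, not $\eps/\delta$). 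Your parenthetical ``more carefully carrying both terms'' is the honest version; the step $\eps+C\delta\lesssim\eps$ and the phrase ``choosing $p=y$'' are not justified by the hypotheses as stated and should be dropped.
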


\begin{proof}
 By condition (ii), we can extend $f$ to the $\frac{\delta}{3}$
 neighborhood of $A$ by just setting it equal to the functions $g_{x}$.
 Lets call $f_1$ the extension by $0$ of this function to $\R^d$. 
 Let $\eta:\R^d\to\R$ be a mollifier that is $\int \eta(x) dx =1$,
 $\eta\geq 0$, and $\mathrm{supp}(\eta)\subseteq B_1(0)$.
 We set $\eta_t(x)=\frac{1}{t^d} \eta(x/t)$.
 Finally, we define
 \[
  s(x)=\begin{cases} \frac{\delta}{6},&\dist(x, A)\geq \frac{\delta}{6};\\
   \dist(x,A),&\text{otherwise}. \end{cases}
 \]
 We are now ready to define
 \[
  F(x)=\begin{cases} f(x),&x\in A;\\
   \int_{B_{s(x)}(x)} \eta_{s(x)}(x-y) f_1(y) dy,&x\notin A.\end{cases}
 \]
 First it is clear that $F$ defines a continuous function for
 $x\notin A$. As $f_1$ is continuous and $\eta_{s}\to\delta$
 as $s\to 0$, it follows that $F$ is continuous on $\R^d$.
 To see the estimates on the gradient, we observe that for
 $\dist(x,A)\geq \frac{\delta}{6}$, we have
 \[
 |\nabla F(x)| \leq \|f_1\|_{L^{\infty}(\R^d)} \cdot \|\nabla \eta_{s(x)}\|_{L^1(\R^d)}
   \leq \eps \cdot \frac{6}{\delta} \cdot \|\nabla \eta\|_{L^1(\R^d)}.
 \]
 For $\dist(x,A) < \frac{\delta}{6}$, we have that
 \[
 |\nabla F(x)| \leq \|\nabla f_1\|_{L^{\infty}(B_s(x)(x))} \cdot \|\eta_{s(x)}\|_{L^1(\R^d)}.
 \]
 From this the claim follows.
\end{proof}

In order to prove Proposition~\ref{prop:evfunc} it remains to
prove (vii) that is understand the function $\psi(x)$.
First, it is clear that $\psi_j(x)\to\psi(x)$ in $\ell^2(\Z^d)$ for
$x\in G=\bigcap_{j=1}^{\infty} G_j$. 

\begin{lemma}
 Let $x\in\T^d$, the set of functions 
 \be
  \psi_{\ell}(x;n) = \psi(x-\ell\star\alpha; n+\ell)
 \ee
 where $x-\ell\star\alpha\in G$ form an orthonormal set in $\ell^2(\Z^d)$
 consisting of different eigenfunctions of $\widehat{H}_{\lambda,\alpha,x}$.
\end{lemma}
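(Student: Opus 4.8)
The plan is to use the quasi-periodic structure of $\widehat{H}_{\lambda,\alpha,x}$, namely the covariance relation under the shift $x\mapsto x+\alpha\star e$ combined with the translation on $\ell^2(\Z^d)$. First I would establish the basic intertwining: if $S_\ell$ denotes translation $(S_\ell\psi)(n)=\psi(n+\ell)$ on $\ell^2(\Z^d)$, then one checks directly from the definition \eqref{eq:defdual2} that $\widehat{H}_{\lambda,\alpha,x-\ell\star\alpha}=S_\ell^{-1}\widehat{H}_{\lambda,\alpha,x}S_\ell$, since $W((x-\ell\star\alpha)+n\star\alpha)=W(x+(n-\ell)\star\alpha)$ and the hopping term $\lambda T$ commutes with $S_\ell$. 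Consequently, whenever $x-\ell\star\alpha\in G$, the function $\psi(x-\ell\star\alpha)$ is an eigenfunction of $\widehat{H}_{\lambda,\alpha,x-\ell\star\alpha}$ with eigenvalue $\gamma(x-\ell\star\alpha)$ (by the already-constructed properties of $\psi$ and $\gamma$), and hence $\psi_\ell(x;\cdot)=\psi(x-\ell\star\alpha;\cdot+\ell)=(S_\ell\,\psi(x-\ell\star\alpha))(\cdot)$ is an eigenfunction of $\widehat{H}_{\lambda,\alpha,x}$ with the same eigenvalue $\gamma(x-\ell\star\alpha)$. Each $\psi_\ell$ has norm $1$ since $S_\ell$ is unitary.

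Next I would prove orthonormality. For $\ell\neq m$ with both $x-\ell\star\alpha,\,x-m\star\alpha\in G$, two cases arise. If $\gamma(x-\ell\star\alpha)\neq\gamma(x-m\star\alpha)$, then $\psi_\ell$ and $\psi_m$ are eigenfunctions of the self-adjoint operator $\widehat{H}_{\lambda,\alpha,x}$ for distinct eigenvalues and are therefore automatically orthogonal. The remaining case, where the eigenvalues coincide, is the main obstacle: here I must use the $\delta_j$-simplicity built into Theorem~\ref{thm:stepA} together with the exponential localization of the $\psi_j$. The idea is that $\psi_\ell$ is (up to an exponentially small error coming from the truncation at scale $R_j$) supported on a cube of side $\sim R_j$ centred at $-\ell$, and similarly $\psi_m$ near $-m$; when $|\ell-m|_\infty$ is large the supports barely overlap and $\spr{\psi_\ell}{\psi_m}$ is negligible, while when $|\ell-m|_\infty$ is comparable to $R_j$ one instead invokes simplicity at a larger scale $R_{j'}$ with $j'$ chosen so that $R_{j'}\gg|\ell-m|_\infty$: on that scale both $\psi_{\ell}$ and $\psi_m$ are well approximated by genuine eigenfunctions of $\widehat{H}^{\Lambda_{R_{j'}}(\cdot)}$ sitting inside a common large cube, the relevant eigenvalue is $\delta_{j'}$-simple, so the two approximate eigenfunctions must be colinear; but $\spr{\psi_\ell}{\psi_\ell}=\spr{\psi_m}{\psi_m}=1$ forces them to be equal, contradicting $\ell\neq m$ (they are centred at different lattice points, so they cannot agree). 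Thus the eigenvalue-coincidence-with-$\ell\neq m$ case cannot occur for sufficiently separated $\ell,m$, and for bounded separation one argues directly from the quantitative localization estimate $\|\psi(x)\|_{\ell^1(\Z^d)}\le 2$ and the smallness of $\lambda$.

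Finally, to conclude that the $\{\psi_\ell\}$ are \emph{different} eigenfunctions, I note that $\psi_\ell$ and $\psi_m$ for $\ell\neq m$ are genuinely distinct vectors: modulo an exponentially small tail they are bump functions concentrated at distinct sites $-\ell\neq -m$, and since each has full norm $1$ while their overlap is small, they are linearly independent. Collecting the two cases gives that $\spr{\psi_\ell}{\psi_m}=\delta_{\ell m}$ for all $\ell,m$ with $x-\ell\star\alpha,x-m\star\alpha\in G$, which is exactly the assertion. I expect the bookkeeping of which scale $R_{j'}$ to use as a function of $|\ell-m|_\infty$, and making the two error sources (truncation tail vs. perturbation of the eigenvalue equation) quantitatively compatible, to be the only delicate point; everything else is a direct consequence of the covariance relation and the estimates already recorded in Theorem~\ref{thm:stepA} and the preceding lemmas.
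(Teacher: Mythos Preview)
Your overall strategy matches the paper's: establish the covariance $\widehat{H}_{\lambda,\alpha,x-\ell\star\alpha}=S_\ell^{-1}\widehat{H}_{\lambda,\alpha,x}S_\ell$ to see that each $\psi_\ell$ is a normalized eigenfunction of $\widehat{H}_{\lambda,\alpha,x}$ with eigenvalue $\gamma(x-\ell\star\alpha)$, observe that distinct eigenvalues give orthogonality for free, and rule out the possibility of equal eigenvalues via the $\delta_j$-simplicity of Theorem~\ref{thm:stepA}. The paper's execution of the last step is cleaner than yours, and your case split obscures rather than helps.

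In the paper, the coincidence case is handled in one stroke: reduce to $k=0$, pick any $j$ with $R_j\geq 10|\ell|$, and note that the finite-scale approximants $\varphi_1(n)=\psi_{j-1}(x;n)$ and $\varphi_2(n)=\psi_{j-1}(x-\ell\star\alpha;n+\ell)$ are both approximate eigenfunctions of $\widehat{H}^{\Lambda_{R_j}(0)}_{\lambda,\alpha,x}$ at the common energy $E$, while $|\spr{\varphi_1}{\varphi_2}|\leq 12\lambda^{1/10}$ because each is close to the corresponding $\delta$-function. This forces the spectral projection of $\widehat{H}^{\Lambda_{R_j}(0)}_{\lambda,\alpha,x}$ on $[E_j(x)-\delta_j^2,E_j(x)+\delta_j^2]$ to have trace at least $2$, contradicting $\delta_j$-simplicity. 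There is no separate treatment of large versus bounded $|\ell|$.

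Your ``large separation'' branch (small overlap of supports $\Rightarrow$ small inner product) does not by itself exclude coincidence: a small but nonzero inner product between two genuine eigenfunctions of the \emph{infinite-volume} operator at a common eigenvalue is no contradiction, since no simplicity statement is available for $\widehat{H}_{\lambda,\alpha,x}$ itself. Your ``bounded separation'' branch, invoking only $\|\psi(x)\|_{\ell^1}\leq 2$ and smallness of $\lambda$, gives no mechanism to exclude an eigenvalue collision either. Both branches are unnecessary once you notice that the scale $R_{j'}$ can always be chosen large compared with $|\ell-m|$; drop the case split and run your middle argument (the one that actually invokes finite-volume simplicity) uniformly in $\ell$.
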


\begin{proof}
 One can check that $\widehat{H}_{\lambda,\alpha,x}\psi_{\ell}(x)
 =\gamma(x-\ell\star\alpha) \psi_{\ell}(x)$. Hence, the $\psi_{\ell}(x)$
 are all eigenfunctions. Furthermore, they are all different as
 \[
 |\spr{\psi_{\ell}(x)}{\psi_{k}(x)} - \spr{\delta_\ell}{\delta_k}|
   \leq 12 \lambda^{\frac{1}{10}}.
 \]
 If the $\gamma(x-\ell\star\alpha)$ are all different, we are done as the
 eigenfunctions of a self-adjoint operator to different eigenvalues are
 automatically orthonormal.

 Let us now show that $E = \gamma(x-\ell\star\alpha)=\gamma(x-k\star\alpha)$
 cannot happen for $k\neq \ell$ and $x-\ell\star\alpha, x-k\star\alpha \in G$.
 We can assume that $k=0$ and choose $j$ so large that $R_{j}\geq 10 |\ell|$.
 Let
 \[
  \varphi_1(n)= \psi_{j-1}(x;n),\quad \varphi_2(n)=
   \psi_{j-1}(x-\ell\star\alpha;n+ \ell).
 \]
 We have that 
 \[
  \|(\widehat{H}^{\Lambda_{R_j}(n)}_{\lambda,\alpha,x}-E)\varphi_t\|
   \leq \sum_{s\geq j+1} (\delta_s)^{10} + \E^{-\sqrt{R_{j}}} \leq \delta_{j}^{5}
 \]
 for $t=1,2$ and $|\spr{\varphi_1}{\varphi_2}|\leq 12\lambda^{\frac{1}{10}}$.
 This implies that
 \[
  \tr(P_{[E_j(x)-(\delta_j)^2,E_j(x)+(\delta_j)^2]}(\widehat{H}^{\Lambda_{R_j}(0)}
 _{\lambda,\alpha,x})) \geq 2.
 \]
 This is a contradiction finishing the proof.
\end{proof}

%
%

\section{Control on the eigenvalues: Proof of Theorem~\ref{thm:stepA}}
\label{sec:pfstepA}

The first step of the proof will be to prove the following
initial condition.

\begin{proposition}\label{prop:init}
 Let $\eps>0$ then there exists $\delta>0$ such that the following holds.
 Let $R\geq 1$, $x\in\T^d$. Then there exists $\lambda_2 > 0$ such that there exists
 a set $G_1\subseteq [0,1]^d$ of measure $|G_1|\geq 1-\frac{\eps}{2}$
 such that for $\alpha\in G_1$, we have
 \be
  E\text{ is a $\delta$-simple eigenvalue of } 
  \widehat{H}_{\lambda,\alpha,x}^{\Lambda_R(0)}
 \ee
 for $\lambda\in (0,\lambda_2)$ and some $E$ satisfying
 \be
  |E - \sum_{j=1}^{d} 2\cos(2\pi x_j)| \leq \lambda\|f\|_{L^{\infty}(\T^d)}.
 \ee
 Furthermore, the corresponding normalized eigenfunction $\psi$ 
 can be chosen to satisfy  
 \be
  \|\psi -\delta_0\|\leq \frac{2\|f\|_{L^{\infty}(\T^d)}}{\delta}\cdot \lambda.
 \ee
\end{proposition}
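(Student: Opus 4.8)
The plan is to prove Proposition~\ref{prop:init} by a straightforward first-order perturbation argument, treating $\lambda T$ as a small perturbation of the diagonal operator $W_{\alpha,x}^{\Lambda_R(0)}$. First I would observe that $\widehat{H}_{\lambda,\alpha,x}^{\Lambda_R(0)} = W_{\alpha,x}^{\Lambda_R(0)} + \lambda T^{\Lambda_R(0)}$, where $W_{\alpha,x}^{\Lambda_R(0)}$ is diagonal with entries $W(x+n\star\alpha)$ for $n\in\Lambda_R(0)$, and $\|T^{\Lambda_R(0)}\|\leq \|f\|_{L^\infty(\T^d)}$ since $\sum_k|\hat f(k)|\leq\|f\|_{L^\infty}$ (after the normalization made earlier one may just bound by a fixed constant; I will carry $\|f\|_{L^\infty}$ for transparency). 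The unperturbed eigenvalue associated with the site $n=0$ is $E^{(0)} = W(x) = \sum_{j=1}^d 2\cos(2\pi x_j)$ with eigenvector $\delta_0$.

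The key step is to choose $\delta>0$ and a large-measure set of frequencies $G_1$ so that the site $n=0$ is \emph{separated} from all other sites: precisely, $G_1 = \{\alpha\in[0,1]^d : |W(x+n\star\alpha) - W(x)| \geq 4\delta \text{ for all } n\in\Lambda_R(0)\setminus\{0\}\}$. For each fixed $n\neq 0$, the map $\alpha\mapsto W(x+n\star\alpha)-W(x)$ is a nonconstant real-analytic function on $[0,1]^d$ (it depends genuinely on those $\alpha_j$ with $n_j\neq 0$), so the sublevel set where its absolute value is $<4\delta$ has measure $\to 0$ as $\delta\to 0$, uniformly once we also use analyticity to get a power bound; summing over the at most $(2R+1)^d$ values of $n$ and choosing $\delta$ small enough (depending on $\eps$ and $R$) gives $|G_1|\geq 1-\eps/2$. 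The dependence of $\delta$ on $R$ is harmless here since in Theorem~\ref{thm:stepA} the value $R=R_1$ is fixed at the start.

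Once this separation holds, a standard Combes--Thomas / resolvent estimate or simply the min-max principle finishes the proof. For $\alpha\in G_1$ and $\lambda$ small enough that $\lambda\|f\|_{L^\infty}\leq\delta$, the operator $\widehat{H}_{\lambda,\alpha,x}^{\Lambda_R(0)}$ has exactly one eigenvalue $E$ in the interval $[W(x)-\delta, W(x)+\delta]$, because the perturbation moves each unperturbed eigenvalue by at most $\lambda\|T\|\leq\lambda\|f\|_{L^\infty}\leq\delta$, so the unique unperturbed eigenvalue $W(x)$ in $[W(x)-\delta,W(x)+\delta]$ contributes exactly one perturbed eigenvalue there and all others (distance $\geq 4\delta$ away) stay outside $[W(x)-2\delta, W(x)+2\delta]$; this yields $\delta$-simplicity and $|E-W(x)|\leq\lambda\|f\|_{L^\infty}$. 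For the eigenfunction bound, write $\psi = \delta_0 + \phi$ with $\phi\perp\delta_0$; applying $(W_{\alpha,x}^{\Lambda_R(0)}-E)$ to the eigenvalue equation and restricting to the orthogonal complement of $\delta_0$ gives $\phi = -(W_{\alpha,x}^{\Lambda_R(0)}-E)^{-1} P_{\delta_0^\perp}\lambda T\psi$, and on $\delta_0^\perp$ the diagonal operator $W_{\alpha,x}^{\Lambda_R(0)}-E$ has norm of inverse at most $1/(4\delta - \delta) \leq 1/\delta$ by the separation and $|E-W(x)|\leq\delta$; hence $\|\phi\|\leq \lambda\|f\|_{L^\infty}/\delta \cdot \|\psi\|$, and solving the resulting inequality (for $\lambda$ small) gives $\|\psi-\delta_0\| = \|\phi\| \leq 2\|f\|_{L^\infty}\lambda/\delta$ after normalizing $\psi$.

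I expect the only real point requiring care is the measure estimate for $G_1$: one must verify that the real-analytic function $\alpha\mapsto W(x+n\star\alpha)-W(x)$ genuinely is nonconstant for every $n\neq 0$ in $\Lambda_R(0)$ and obtain a quantitative sublevel-set bound uniform in $x$. This is elementary since $W$ is an explicit trigonometric polynomial — $W(x+n\star\alpha)-W(x) = \sum_{j:\,n_j\neq 0} 2(\cos(2\pi(x_j+n_j\alpha_j)) - \cos(2\pi x_j))$, each summand a nonconstant analytic function of a single variable $\alpha_j$ — so a one-dimensional sublevel bound (the zero set of a real-analytic nonconstant function has a definite complexity bound, giving measure $\lesssim \delta^{c}$ for the sublevel set) applied in the relevant coordinate suffices, and everything is uniform in $x$ and in $n$ up to the combinatorial factor $(2R+1)^d$.
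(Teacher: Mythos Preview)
Your proposal is correct and follows essentially the same approach as the paper: separate the diagonal entry $W(x)$ at $n=0$ from the others by a sublevel-set estimate for the real-analytic functions $\alpha\mapsto W(x+n\star\alpha)-W(x)$, then treat $\lambda T$ as a small perturbation to obtain the $\delta$-simple eigenvalue and the eigenvector bound. The one inaccuracy is the justification ``$\sum_k|\hat f(k)|\leq\|f\|_{L^\infty}$'', which goes the wrong way; the bound $\|T^{\Lambda_R(0)}\|\leq\|T\|=\|f\|_{L^\infty}$ actually holds because $T$ is conjugate to multiplication by $f$ on $L^2(\T^d)$, but this slip is inconsequential to the argument.
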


We will provide the proof of this proposition in 
Section~\ref{sec:pfinit}. It is essentially a simple test function
construction. We see that $\eps>0$ dictates that $\delta_1$ must
be smaller than $\delta$ thus imposes an additional smallness
condition on $\lambda$, i.e. $\lambda\leq \delta^{10}$.

\bigskip

In order to finish the proof of Theorem~\ref{thm:stepA},
we now need to show that the conclusions for $j-1$ imply
the conclusions of $j$.
In order to accomplish this, we will need to introduce
a bit of notation and review some results from \cite{b07}.
Given $\Lambda\subseteq\Z^d$, $n,m \in \Lambda$, and $E\in\R$,
we introduce the Green's function by
\be
 G^{\Lambda}_{\lambda,\alpha,x}(E;n,m)=\spr{\delta_n}{
  (\widehat{H}^{\Lambda}_{\lambda,\alpha,x}-E)^{-1}\delta_m}.
\ee
In order to quantify the behavior of the Green's function,
we introduce

\begin{definition}
 Let $\gamma>0$, $\tau\in (0,1)$.
 $\Lambda_R(0)$ is called $(\gamma,\tau)$-suitable for
 $\widehat{H}_{\lambda,\alpha,x} - E$ if
 \begin{enumerate}
  \item $\|(\widehat{H}^{\Lambda_R(0)}_{\lambda,\alpha,x}-E)^{-1}\|\leq\E^{R^{\tau}}$.
  \item For $n,m\in\Lambda_R(0)$, $|n-m|\geq R/2$, we have
   \be
   |G^{\Lambda_R(0)}_{\lambda,\alpha,x}(E;n,m)|\leq\E^{-\gamma|n-m|}.
   \ee
 \end{enumerate}
\end{definition}

An adaptation of the argument of \cite{b07} shows that

\begin{theorem}
 Let $\eps >0$. There exist $\lambda_3 > 0$,
 $\gamma>0$, $\sigma,\tau\in (0,1)$ such that for
 $\lambda\in(0,\lambda_3)$, there exists $\mathcal{A}_{1,\lambda}$
 such that for $\alpha\in\mathcal{A}_{1,\lambda}$ and $R\geq 1$,
 we have for $E\in\R$ and $1\leq j\leq d$
 \be
 |\{x_j \in\T:\quad \Lambda_R(0)\text{ is $(\gamma, \tau)$-suitable for }
   \widehat{H}_{\lambda,\alpha,x}-E\}|
    \leq\E^{-R^{\sigma}}
 \ee
 for each fixed choice of $x_1,\dots,x_{j-1}, x_{j+1},\dots,x_d$.
\end{theorem}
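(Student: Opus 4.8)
The displayed bound is the quasi-periodic large-deviation estimate for Green's functions that underlies Anderson localization for the dual operator $\widehat{H}_{\lambda,\alpha,x}=W_{\alpha,x}+\lambda T$, which sits in the perturbative regime since $\lambda$ is small; interpreting the displayed $x_j$-set as the set on which $\Lambda_R(0)$ \emph{fails} to be $(\gamma,\tau)$-suitable for $\widehat{H}_{\lambda,\alpha,x}-E$ — as one must read it, since $\Lambda_R(0)$ is suitable for every $x_j$ when $E$ lies at a fixed distance from $\sigma(\Delta)$ — the plan is to adapt the argument of \cite{b07}. Concretely, I would fix $\gamma>0$ and $\sigma,\tau\in(0,1)$ and, for small $\lambda$, take $\mathcal{A}_{1,\lambda}$ to be the set of $\alpha$ obeying a coordinatewise Diophantine condition $\|n_j\alpha_j\|_{\R/\Z}\ge c_0|n_j|^{-A}$ for all $n_j\ne0$ — a coordinatewise condition being natural because the diagonal $W(x+n\star\alpha)=\sum_{j}2\cos(2\pi(x_j+n_j\alpha_j))$ separates in the coordinates — together with the requirement that the finitely many semialgebraic ``large-deviation'' sets appearing in the induction below have the expected measure; then $|\mathcal{A}_{1,\lambda}|\to1$ as $c_0\to0$.

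The argument would run by a multiscale induction. At the base scale $R_0$ (taken large in terms of $\lambda$), $\widehat{H}^{\Lambda_{R_0}(0)}_{\lambda,\alpha,x}$ is a norm-$(\lambda C_\eta)$ perturbation of the diagonal matrix $W^{\Lambda_{R_0}(0)}_{\alpha,x}$, so a Neumann series gives $(\gamma,\tau)$-suitability, with any fixed rate $\gamma<\eta$, as soon as $\dist\bigl(E,\{W(x+n\star\alpha):n\in\Lambda_{R_0}(0)\}\bigr)\ge\E^{-R_0^{\tau}}$; since $x_j\mapsto W(x+n\star\alpha)$ equals $2\cos(2\pi(x_j+n_j\alpha_j))$ plus a term independent of $x_j$, the $x_j$-set where $|W(x+n\star\alpha)-E|<\E^{-R_0^{\tau}}$ has measure $\lesssim\E^{-R_0^{\tau}/2}$, and summing over the $\le(2R_0+1)^{d}$ sites leaves a bad set of measure $\le\E^{-R_0^{\sigma}}$ for any $\sigma<\tau$. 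For the inductive step I would use the standard dichotomy: if $\Lambda_R(0)$ is not $(\gamma,\tau)$-suitable for $\widehat{H}_{\lambda,\alpha,x}-E$, then, covering it by cubes of the previous size $R'$, either (a) there are two subcubes $\Lambda_{R'}(n_1),\Lambda_{R'}(n_2)$ with $|n_1-n_2|_\infty\ge R/4$ that each fail to be $(\gamma,\tau)$-suitable for $\widehat{H}_{\lambda,\alpha,x}-E$, or (b) there is at most one such bad region, in which case the geometric resolvent identity patched over the surrounding good $R'$-cubes reproduces (i)--(ii) at scale $R$ with $\gamma$ lowered by a summable amount and $\tau$ unchanged, contradicting non-suitability. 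Hence the bad $x_j$-set at scale $R$ lies in the set where (a) occurs.

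To control the set where (a) occurs I would use translation covariance, $\widehat{H}^{\Lambda_{R'}(n)}_{\lambda,\alpha,x}=\widehat{H}^{\Lambda_{R'}(0)}_{\lambda,\alpha,x+n\star\alpha}$: event (a) says $\widehat{H}^{\Lambda_{R'}(0)}$ is simultaneously resonant at $E$ at the phases $x+n_1\star\alpha$ and $x+n_2\star\alpha$, and for each fixed pair the $x_j$-measure of this is bounded by combining the $R'$-scale inductive estimate with the Diophantine lower bound on $\|(n_1-n_2)_j\alpha_j\|_{\R/\Z}$ and the transversality of $x_j\mapsto W(x+n\star\alpha)$; since the $R'$-scale bound $\E^{-(R')^{\sigma}}$ enters with $R'$ a fixed positive power of $R$, the sum over the $\lesssim R^{2d}$ pairs still comes out $\le\E^{-R^{\sigma}}$. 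The point I expect to need real care is making all of this \emph{uniform in the energy} $E$: the exceptional $x_j$-set depends on $E$ and there is no union bound over $E\in\R$. Following \cite{b07}, I would record the relevant bad sets as semialgebraic subsets of $(x_1,\dots,x_d,E)$-space of degree polynomial in $R$ — which is transparent here, since $W$ is a trigonometric polynomial and the hopping coefficients $\lambda\hat{f}(k)$ are constants, so $\widehat{H}^{\Lambda}_{\lambda,\alpha,x}$ is a finite matrix whose entries are trigonometric polynomials of degree $\le1$ in $x$, whence its characteristic polynomial and resolvent entries are semialgebraic in $(x_1,\dots,x_d,E)$ of degree bounded polynomially in $R$ — and then apply the quantitative control on the number of connected components of a semialgebraic set (Yomdin--Gromov) to pass from a measure estimate on the full set to one on every $E$-slice; the same device lets the double-resonance elimination be carried out with a single exceptional set of frequencies, independent of $E$, which I would absorb into $\mathcal{A}_{1,\lambda}$. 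The final step would be the bookkeeping: verifying that the rate losses sum to a small total over all scales and that $\gamma>0$ and $\sigma,\tau\in(0,1)$ can be fixed once and for all, exactly as in \cite{b07}.
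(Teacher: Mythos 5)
Your proposal follows the same route as the paper, which simply cites Proposition~2.2 of \cite{b07} and asserts that adapting Bourgain's argument from $\Delta+V$ to the long-range dual operator $\widehat{H}=\lambda T+W$ is straightforward; your multiscale induction, coordinatewise Diophantine condition on $\alpha$, and semialgebraic elimination to get uniformity in $E$ are a correct and substantially more explicit rendering of exactly that argument. You are also right that the displayed set must be read as the set on which $\Lambda_R(0)$ \emph{fails} to be $(\gamma,\tau)$-suitable for $\widehat{H}_{\lambda,\alpha,x}-E$; as printed, the theorem has the good and bad sets interchanged.
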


\begin{proof}
 This is basically Proposition~2.2. in \cite{b07}. However, Bourgain
 proves this result for $H=\Delta+V$ with $\Delta$ the usual Laplacian.
 The modification to treat operators of the form $\widehat{H}=T+W$,
 where $W$ is a quasi-periodic potential and $T$ a long range hopping
 operator are straightforward. The computations at the beginning
 of Section~\ref{sec:efcontrol} would be helpful to write down a
 proof for the long range case.
\end{proof}

Using semi-algebraic set methods and frequency elimination
one can show

\begin{proposition}\label{prop:freqelim}
 Let $x\in\T^d$, $\gamma>0$, $\tau\in (0,1)$. There exists $C_1 = C_1(d) \geq 1$
 such that for arbitrary $C_2 > C_1$ and $N\geq 1$ large enough, 
 we have that there exists
 $A\subseteq [0,1]^d$ satisfying $|A|\geq 1-\frac{1}{N^2}$
 such that for $\alpha\in G$,
 \be
  N^{C_1} \leq |n|_{\infty} \leq N^{C_2}
 \ee
 and $E\in\R$ satisfying
 \be\label{eq:distspec}
  \dist(E,\sigma(\widehat{H}_{\lambda,\alpha,x}^{\Lambda_{N^{30}}(0)}))\leq \E^{-c N}
 \ee
 we have
 \be
  \Lambda_{N}(n)\text{ is $(\gamma,\tau)$-suitable for }
   \widehat{H}_{\lambda,\alpha,x}-E.
 \ee
\end{proposition}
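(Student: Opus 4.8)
The plan is to follow the semi-algebraic set and frequency-elimination machinery of \cite{bbook}. Call $n$ \emph{admissible} if $N^{C_1}\le|n|_\infty\le N^{C_2}$. First I would make a translation reduction: since $T$ is translation invariant and $W_{\alpha,x}(m+n)=W(x+n\star\alpha+m\star\alpha)$, conjugation by the shift $\delta_m\mapsto\delta_{m-n}$ identifies $\widehat{H}^{\Lambda_N(n)}_{\lambda,\alpha,x}$ with $\widehat{H}^{\Lambda_N(0)}_{\lambda,\alpha,\,x+n\star\alpha}$ and sends $G^{\Lambda_N(n)}_{\lambda,\alpha,x}(E;m,m')$ to $G^{\Lambda_N(0)}_{\lambda,\alpha,\,x+n\star\alpha}(E;m-n,m'-n)$; as $|m-m'|$ is unchanged, $\Lambda_N(n)$ is $(\gamma,\tau)$-suitable for $\widehat{H}_{\lambda,\alpha,x}-E$ exactly when $\Lambda_N(0)$ is $(\gamma,\tau)$-suitable for $\widehat{H}_{\lambda,\alpha,\,x+n\star\alpha}-E$. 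It therefore suffices to bound the measure of the set of $\alpha\in[0,1]^d$ for which there are an admissible $n$ and an $E$ with $\dist(E,\sigma(\widehat{H}^{\Lambda_{N^{30}}(0)}_{\lambda,\alpha,x}))\le\E^{-cN}$ such that $\Lambda_N(0)$ fails to be $(\gamma,\tau)$-suitable for $\widehat{H}_{\lambda,\alpha,\,x+n\star\alpha}-E$.

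Next I would pass to a semi-algebraic description of controlled complexity. Because $W(y)=\sum_j2\cos(2\pi y_j)$ is a trigonometric polynomial and $T^{\Lambda_N(0)}$ is a fixed matrix (entries $\lambda\hat f(k)$, $|k|_\infty\le 2N$, independent of $\alpha$, $x$, $E$), expanding $\cos\bigl(2\pi(x_i+(n_i+m_i)\alpha_i)\bigr)$ in Chebyshev polynomials exhibits the entries of $\widehat{H}^{\Lambda_N(0)}_{\lambda,\alpha,\,x+n\star\alpha}-E$ as honest polynomials in $(\cos2\pi\alpha_i,\sin2\pi\alpha_i)_{i=1}^{d}$ and $E$ of degree $\lesssim N^{C_2}$. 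Hence the two conditions defining $(\gamma,\tau)$-suitability --- the resolvent bound $\|(\widehat{H}^{\Lambda_N(0)}-E)^{-1}\|\le\E^{N^\tau}$ (equivalently, the smallest singular value exceeds $\E^{-N^\tau}$) and the decay bound $|G^{\Lambda_N(0)}(E;m,m')|\le\E^{-\gamma|m-m'|}$ (a polynomial inequality between cofactors and the determinant) --- together with $|E|\le10d$ cut out a semi-algebraic set $S_n\subseteq[0,1]^d_\alpha\times\R_E$ of degree at most $B:=N^{C(d,C_2)}$; likewise $\{(\alpha,E):\dist(E,\sigma(\widehat{H}^{\Lambda_{N^{30}}(0)}_{\lambda,\alpha,x}))\le\E^{-cN}\}=\{(\alpha,E):\|(\widehat{H}^{\Lambda_{N^{30}}(0)}_{\lambda,\alpha,x}-E)^{-1}\|\ge\E^{cN}\}$ is semi-algebraic of degree $\lesssim N^{C(d)}$. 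For each admissible $n$ the bad frequencies then form the projection to $[0,1]^d_\alpha$ of the semi-algebraic set $\mathcal{T}_n$ defined by the conjunction of the complement of $(\gamma,\tau)$-suitability with this distance condition; the complexity of $\mathcal{T}_n$ is at most $B$.

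The heart of the argument is to show $|\mathrm{proj}_\alpha\mathcal{T}_n|\le\E^{-N^{\sigma'}}$ for a fixed $\sigma'\in(0,\sigma)$, by feeding two section estimates for $\mathcal{T}_n$ into the transversality estimates for semi-algebraic sets of \cite{bbook}. First, \emph{vertical thinness}: for fixed $\alpha$ the $E$-section of $\mathcal{T}_n$ lies in the $\E^{-cN}$-neighbourhood of $\sigma(\widehat{H}^{\Lambda_{N^{30}}(0)}_{\lambda,\alpha,x})$, a union of at most $(2N^{30}+1)^d$ intervals, hence of measure $\le\E^{-cN/2}$; moreover, by Weyl's perturbation inequality each eigenvalue branch $\alpha\mapsto E_\ell(\alpha)$ is Lipschitz with constant $\lesssim N^{30}$ (differentiating the diagonal entries), which keeps the eigenvalue variety tame. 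Second, \emph{horizontal thinness}: for fixed $E$ and fixed $\alpha_i$, $i\neq j$, where $j$ is chosen with $|n_j|=|n|_\infty\ge N^{C_1}$, the set of $\alpha_j$ for which $\Lambda_N(0)$ fails to be $(\gamma,\tau)$-suitable for $\widehat{H}_{\lambda,\alpha,\,x+n\star\alpha}-E$ has measure $\le\E^{-N^{\sigma}}$: this is the adapted Proposition~2.2 of \cite{b07}, read in the $j$-th phase variable and transported through the $|n_j|$-to-one measure-preserving substitution $\theta_j=x_j+n_j\alpha_j$ (valid on the good-frequency set of that proposition, to which the final set $A$ is tacitly restricted). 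The transversality lemma converts these small sections of a degree-$B$ set, cut against the eigenvalue variety, into a small $\alpha$-projection, provided $B$ is dominated by a small power of the section sizes --- which forces $C_1=C_1(d)$ to be large enough and is where the hypothesis that $N$ is large (depending on $C_2$) is used. Summing over the at most $(2N^{C_2}+1)^d$ admissible $n$ and setting $A:=[0,1]^d\setminus\bigcup_n\mathrm{proj}_\alpha\mathcal{T}_n$ gives $|A|\ge1-(2N^{C_2}+1)^d\,\E^{-N^{\sigma'}}\ge1-N^{-2}$ for $N$ large, and by construction $A$ has the asserted property.

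I expect the transversality step to be the main obstacle. The large-deviation input of \cite{b07} is naturally formulated with the frequency $\alpha$ frozen and the phase $\theta$ varying, whereas the frequency elimination must vary $\alpha$ itself with $\theta=x+n\star\alpha$ slaved to it; reconciling the two --- controlling the direct dependence of $\widehat{H}$ on $\alpha_j$ while exploiting that a full wrap of $\theta_j$ costs only $|n_j|^{-1}\le N^{-C_1}$ in $\alpha_j$, and then matching the resulting section bounds against the semi-algebraic complexity $B$ and the choice of $C_1$ --- is precisely the content of Bourgain's frequency-elimination machinery. I would import this essentially verbatim from \cite{bbook}, with the bookkeeping for the long-range hopping term $T$ handled exactly as in the proof of the adapted large-deviation theorem above and in \cite{kskew}.
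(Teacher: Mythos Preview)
Your proposal is correct and is essentially the same approach as the paper's: the paper simply defers to Bourgain, pointing to formulas (3.5)--(3.26) of \cite{b07} and noting that the only change is replacing $N$ by $N^{30}$ in the distance-to-spectrum condition. What you have written is a faithful unpacking of precisely that machinery --- the translation reduction to phase $x+n\star\alpha$, the semi-algebraic encoding of the failure of suitability, the vertical section bound from the spectral-distance constraint, the horizontal section bound from the large-deviation estimate (the adapted Proposition~2.2 of \cite{b07}), and the semi-algebraic transversality/projection lemma that trades sections for a projection bound. Your identification of the transversality step and the $C_1$-dependence as the crux matches the role of these formulas in \cite{b07}.
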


\begin{proof}
 This can be again achieved as in \cite{b07} see formulas
 (3.5) to (3.26). The only modification necessary is that
 Bourgain works with $N$ instead of $N^{30}$ in \eqref{eq:distspec}.
 The changes required for this are minor.
\end{proof}

With these preparations done, we are now ready to start the
proof of Theorem~\ref{thm:stepA}. We recall that $r=R_{j-1}$
and $R=R_{j}$.
We apply Proposition~\ref{prop:freqelim} with $N^{C_1} = \frac{r}{2}$. So we
let $N = \lfloor (r/2)^{\frac{1}{C_1}}\rfloor$. In order for this
elimination of $\alpha$ only contributing $\frac{\eps}{2}$, we need
to choose $R_1$ so large that
\be\label{eq:condR1}
 \sum_{j=2}^{\infty} \frac{1}{(R_j / 2)^{\frac{2}{C_1}}} \leq \frac{\eps}{2}.
\ee
This is clearly possible. Furthermore, as $C_2$ is arbitrary,
we can choose it such that $(R_j / 2)^{\frac{C_2}{C_1}} \geq R_{j+1} = (R_j)^{10}$.

We will now use the following result about general operators.

\begin{theorem}\label{thm:evexten}
 Let $\gamma>0$, $\tau\in (0,1)$, $1000 \rho \leq r \leq \frac{1}{1000} R$,
 and $\delta\geq\E^{-\frac{\gamma r}{1000}}$.

 Let $E$ be a $\delta$-simple eigenvalue of $H^{\Lambda_r(0)}$ and
 denote by $\psi$ the corresponding normalized eigenfunction.
 Assume for $\frac{r}{2}\leq |n|_{\infty}\leq R$ that
 \be
  \Lambda_{\rho}(n)\text{ is $(\gamma,\tau)$-suitable for }
   H-E.
 \ee
 Then there exists $\ti{E}$ a $\E^{- 300 \gamma \rho}$-simple
 eigenvalue of $H^{\Lambda_R(0)}$ satisfying $|E-\ti{E}|\leq \E^{-\frac{\gamma r}{250}}$.
 Furthermore, there exists a corresponding normalized eigenfunction
 $\varphi$ such that
 \be
  \|\psi-\varphi\|\leq \E^{-\frac{\gamma r}{1000000}}.
 \ee
\end{theorem}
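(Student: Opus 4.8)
The plan is to build $\tilde E$ and $\varphi$ from $E$ and $\psi$ by an excision argument, and then to use the suitability hypothesis in the reverse direction to show that nothing else of $H^{\Lambda_R(0)}$ sits near $E$. Writing $H=\lambda T+W$ with $T$ the exponentially decaying hopping and $W$ the bounded multiplication part (as for $\widehat H_{\lambda,\alpha,x}$), the whole argument rests on one decay lemma, which I would prove first: if $\phi$ is a normalized eigenfunction of $H^{\Lambda_R(0)}$ with eigenvalue $E'$ satisfying $|E'-E|\le\E^{-2\rho^\tau}$, then $|\phi(n)|\le\E^{-\gamma'(|n|_\infty-r/2)}$ for $r/2\le|n|_\infty\le R-\rho$, where $\gamma'$ may be taken as close to $\gamma$ as one likes once $\rho$ is large; and the same holds, with $R$ replaced by $r$, for eigenfunctions of $H^{\Lambda_r(0)}$ on the range $r/2\le|n|_\infty\le r-\rho$, in particular for $\psi$ itself. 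The bound on $E'$ is used only to guarantee, via $\|(H^{\Lambda_\rho(n)}-E)^{-1}\|\le\E^{\rho^\tau}$ and $\rho^\tau\ll\gamma r$, that every box $\Lambda_\rho(n)$ in the annulus stays $(\gamma/2,\tau)$-suitable for $H-E'$. The proof is the standard resolvent iteration: whenever $\Lambda_\rho(n)$ lies in the big box one has $\phi(n)=-\lambda\sum_{n'\in\Lambda_\rho(n),\,m'\notin\Lambda_\rho(n)}G^{\Lambda_\rho(n)}(E';n,n')\hat f(n'-m')\phi(m')$, and splitting this at $|n-n'|_\infty=\rho/2$ — Green's-function decay on the far part, the resolvent bound together with $|\hat f(k)|\le\E^{-\eta|k|}$ on the near part — gives $|\phi(n)|\le\E^{-c\rho}\max\{|\phi(m)|:|m-n|_\infty\ge\rho/2\}$ for a fixed $c$ comparable to $\gamma$; iterating this outward from the core $\Lambda_{r/2}(0)$, which the hypothesis $r\ge 1000\rho$ permits for about $r/\rho$ steps without leaving the good annulus, yields the stated exponential decay. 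I expect this lemma to be the main obstacle: it carries all the multiscale bookkeeping, and the iteration losses must be controlled well enough that $\gamma'$ stays a definite fraction of $\gamma$, because the numerology of the stated conclusion is tight; everything after it is linear algebra with spectral projectors.

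Granting the lemma, the existence of $\tilde E$ is a test-function argument. Fix a cutoff $\chi$ with $\chi\equiv1$ on $\Lambda_{2r/3}(0)$, $\chi\equiv0$ off $\Lambda_{3r/4}(0)$ and $|\nabla\chi|\lesssim 1/r$. Since $W$ commutes with $\chi$ and $\chi\psi$ is supported strictly inside $\Lambda_r(0)\cap\Lambda_R(0)$, one checks (using $(H^{\Lambda_r(0)}-E)\psi=0$) that $(H^{\Lambda_R(0)}-E)(\chi\psi)=\lambda[T,\chi]\psi$ up to an error $\le\E^{-\eta r/4}$; in $[T,\chi]\psi(n)=\sum_k\hat f(k)(\chi(n)-\chi(n+k))\psi(n+k)$ a nonvanishing term forces either one of $n,n+k$ into the transition shell $\{2r/3\le|\cdot|_\infty\le 3r/4\}$, where $|\psi|\le\E^{-cr}$ by the lemma, or $|k|\ge r/12$, where $|\hat f(k)|\le\E^{-\eta r/12}$; with $\|\psi\|_{\ell^1}\le(3r)^{d/2}$ this gives $\|(H^{\Lambda_R(0)}-E)(\chi\psi)\|\le\E^{-\gamma r/250}\|\chi\psi\|$ and $\|\chi\psi\|\ge 1-\E^{-cr}$, so $\dist(E,\sigma(H^{\Lambda_R(0)}))\le\E^{-\gamma r/250}$; I take $\tilde E$ an eigenvalue attaining this and $\varphi$ a normalized eigenfunction.

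For simplicity, suppose $H^{\Lambda_R(0)}$ had two orthonormal eigenfunctions $\varphi_1,\varphi_2$ with eigenvalues in a window $[E-\mu_0,E+\mu_0]$, $\mu_0$ a small multiple of $\min(\delta,\E^{-2\rho^\tau})$. The lemma applies to each $\varphi_i$, so choosing a cutoff $\chi_1$ whose transition region lies in $\{r/2+400\rho\le|\cdot|_\infty\le r-\rho\}$ — admissible because $r\ge 1000\rho$ — the $\chi_1\varphi_i$ are near-orthonormal quasimodes of $H^{\Lambda_r(0)}$ with $\|(H^{\Lambda_r(0)}-E)(\chi_1\varphi_i)\|<\delta/4$ (here $\delta\ge\E^{-\gamma r/1000}$ is what lets $\delta$ dominate both $\mu_0$ and the quasimode error $\le\E^{-300\gamma\rho}$). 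The usual estimate with $P:=P_{[E-\delta,E+\delta]}(H^{\Lambda_r(0)})$ then gives $\|P(\chi_1\varphi_i)-\chi_1\varphi_i\|<1/4$, whence the Gram matrix of $P(\chi_1\varphi_1),P(\chi_1\varphi_2)$ is invertible and $\tr(P)\ge2$, contradicting the $\delta$-simplicity of $E$ for $H^{\Lambda_r(0)}$. Hence $H^{\Lambda_R(0)}$ has at most one eigenvalue in $[E-\mu_0,E+\mu_0]$; since $|\tilde E-E|\le\E^{-\gamma r/250}\le\mu_0/2$, this makes $\tilde E$ a $\mu_0/2$-simple, hence (under the stated hypotheses) an $\E^{-300\gamma\rho}$-simple, eigenvalue of $H^{\Lambda_R(0)}$. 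Finally, applying the lemma to the genuine eigenfunction $\varphi$ of $\tilde E$ (legitimate as $|\tilde E-E|\le\E^{-2\rho^\tau}$) and repeating the excision of the previous paragraph, $\chi\varphi/\|\chi\varphi\|$ is a unit quasimode of $H^{\Lambda_r(0)}$ at $E$ with error $\le\E^{-\gamma r/300}$; since $E$ is $\delta$-simple with eigenfunction $\psi$ and $\E^{-\gamma r/300}\le\delta/2$, ordinary eigenfunction perturbation together with $\delta\ge\E^{-\gamma r/1000}$ gives $\|\psi-\varphi\|\le\E^{-\gamma r/1000000}$ (the exponent is extravagantly lossy, so the exact value of $\gamma'$ is immaterial here), while $|E-\tilde E|\le\E^{-\gamma r/250}$ was shown above.
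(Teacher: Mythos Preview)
Your proposal is correct and follows essentially the same strategy as the paper: a decay estimate for eigenfunctions obtained by iterating the Poisson formula on suitable $\rho$-boxes, a quasimode construction from $\chi\psi$ to produce $\tilde E$, and a restriction argument pushing eigenfunctions of $H^{\Lambda_R(0)}$ near $E$ back into $\ell^2(\Lambda_r(0))$ so that $\delta$-simplicity there forces uniqueness. The paper packages things slightly differently---it first proves an explicit resolvent bound on the annulus $\Lambda_R(0)\setminus\Lambda_{r/2}(0)$ (to rule out mass of $\psi$ there) and, for simplicity, shows directly that every eigenfunction with eigenvalue in the window is $e^{-\gamma r/1000}$-close to $c\psi$, then derives a contradiction from $\langle\varphi_1,\varphi_2\rangle=0$; you instead use a Gram-matrix/spectral-projection argument. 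These are interchangeable.

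One numerical slip to fix. In the simplicity step you place the transition of $\chi_1$ in $\{r/2+400\rho\le|\cdot|_\infty\le r-\rho\}$, which at its inner edge gives only $|\varphi_i|\lesssim e^{-c\cdot 400\rho}$, hence a quasimode error of order $e^{-300\gamma\rho}$; you then assert that $\delta\ge e^{-\gamma r/1000}$ dominates this. But the hypothesis $1000\rho\le r$ gives no \emph{upper} bound on $r/\rho$, so $e^{-300\gamma\rho}$ can be much larger than $e^{-\gamma r/1000}$ (take $\rho=r^{1/2}$, say). The remedy is the one you already used for $\chi$: put the transition of $\chi_1$ at $\{2r/3,\,3r/4\}$ (this is exactly what the paper does, taking $\varphi_1=\chi_{\Lambda_{2r/3}(0)}\varphi$). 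The decay there is $e^{-\gamma' r/6}$, which is always $\ll\delta$, and the rest of your argument goes through unchanged.
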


We are now ready for

\begin{proof}[Proof of Theorem~\ref{thm:stepA}]
 We choose $R_1$ by \eqref{eq:condR1}
 and apply Proposition~\ref{prop:init} with $R=R_1$. We now
 see that Theorem~\ref{thm:stepA} holds with possibly imposing
 an additional smallness condition on $\lambda$. We finish the proof
 by the use of induction.

 We eliminate frequencies $\alpha$ using Proposition~\ref{prop:freqelim}
 obtaining a set $G_{j} \subseteq G_{j-1}$. Let now $\alpha\in G_j$.
 Choose $\ell < j$ such that
 \[
  N^3 \leq R_\ell \leq N^{30},\quad N=\lfloor(r/2)^{\frac{1}{C_1}}\rfloor,
   \quad r=R_{j-1}
 \]
 where $N$ is as in Proposition~\ref{prop:freqelim}. We then have that
 \[
  \dist(E_{j-1}(\alpha),\sigma(H^{\Lambda_{R_{\ell}}(0)}_{\lambda,\alpha,x}))
   \leq 2 \delta_{\ell} \leq \E^{-N^{\frac{3}{2}}}.
 \]
 As the corresponding eigenfunction is well localizated, the same
 statement holds for $\sigma(H^{\Lambda_{N^{30}}(0)}_{\lambda,\alpha,x})$.
 Hence, we may conclude from Proposition~\ref{prop:freqelim} that the assumptions
 of Theorem~\ref{thm:evexten} hold. 
 We set $E_j(\alpha)=\ti{E}$, $\psi_j(\alpha)=\varphi$ so that
 \[
 |E_j(\alpha)-E_{j-1}(\alpha)|\leq \E^{-\frac{\gamma}{250} R_{j-1}},
   \quad |\psi_{j}(\alpha)-\psi_{j-1}(\alpha)|
    \leq \E^{-\frac{\gamma}{1000000} R_{j-1}}
 \]
 and
 \[
  E_j(\alpha)\text{ is a $\E^{-300 \gamma (R_{j-1})^{\frac{1}{C_1}}}$-
  simple eigenvalue of }
   \widehat{H}_{\lambda,\alpha,x}^{\Lambda_{R_{j}}(0)}.
 \]
 Theorem~\ref{thm:stepA} now follows by simple arithmetic.
\end{proof}

%
%
%

\section{Proof of the initial condition}
\label{sec:pfinit}

Let $x\in\T^d$, $\alpha\in [0,1]^d$, we consider
\be
 W_{x,\alpha} (n) = \sum_{j=1}^{d} 2\cos(2\pi (x_j+n_j\alpha_j)).
\ee
We have that $W_{x,\alpha}(0)$ only depends on $x\in\T^d$,
whereas all the $W_{x,\alpha}(n)$ for $n\in\Z^d\setminus\{0\}$
are nontrivial functions of $\alpha\in [0,1]^d$. This
immediately implies

\begin{proposition}
 Let $\eps >0$, $R\geq 1$, $x\in\T^d$, and $E=W_{x,\bullet}(0)$.
 There exists $\kappa>0$ and $G\subseteq [0,1]^d$ such
 that $|G|\geq 1-\eps$ and for $n\in\Lambda_R(0)\setminus\{0\}$,
 $\alpha\in G$, we have
 \be
  |W_{x,\alpha}(n) - E|\geq\kappa.
 \ee
\end{proposition}

\begin{proof}
 As $f(y)=\sum_{j=1}^{d}2\cos(2\pi y_j)$ is a real-analytic function.
 There exists $\eta_0 > 0$ and $\beta>0$ such that for each $j=1,\dots,d$
 \[
  G_{\eta} = \{y_j:\quad |f(y)-E|\leq \eta\}
 \]
 has measure $|G_{\eta}|\leq\eta^{\beta}$ for $\eta\in(0,\eta_0)$
 and any choice of $y_1,\dots,y_{j-1},y_{j+1},\dots,y_d$.

 For $n\neq 0$, there is $j=1,\dots,d$ such that $n_j\neq 0$.
 It follows that
 \[
  |W_{x,\alpha}(n) - E|\geq\eta,\quad n\in\Lambda_R(0)\setminus\{0\}
 \]
 for $\alpha$ outside a set of measure $\#\Lambda_R(0) \cdot \eta^{\beta}$.
 Hence, the claim follows for an appropriate choice of $\eta$.
\end{proof}

We thus obtain for $E$ is a $\kappa$-simple eigenvalue
of $W_{x,\alpha}^{\Lambda_R(0)}$ for $\alpha\in G$.
Consider now the operator $\widehat{H}_{\lambda,\alpha,x}
= \lambda T + W_{x,\alpha}$. We clearly have that
\be
 \|(\widehat{H}_{\lambda,\alpha,x}^{\Lambda_{R}(0)} - E)
  \delta_0\| \leq \lambda \|T\|
\ee
and that for $\lambda<\frac{\kappa}{2\|T\|}$
\be
 \tr(P_{[E-\frac{\kappa}{2},E+\frac{\kappa}{2}]}
  \widehat{H}_{\lambda,\alpha,x}^{\Lambda_{R}(0)})) = 1.
\ee
Hence, there exists $E_1$ such that $|E-E_1|\leq \lambda\|T\|<\frac{\kappa}{2}$
and a normalized $\psi\in\ell^2(\Lambda_R(0))$ such that
\be
 \widehat{H}^{\Lambda_R(0)}_{\lambda,\alpha,x} \psi = E_1 \psi.
\ee

\begin{lemma}
 We have
 \be
  \|\psi-\delta_0\| \leq \frac{2 \lambda \|T\|}{\kappa}.
 \ee
\end{lemma}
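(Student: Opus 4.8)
The plan is to quantify the distance from $\psi$ to $\delta_0$ by exploiting that $E_1$ is a $\kappa$-simple eigenvalue together with the near-resonance estimate $\|(\widehat{H}^{\Lambda_R(0)}_{\lambda,\alpha,x}-E)\delta_0\|\leq\lambda\|T\|$. First I would decompose $\delta_0 = \spr{\psi}{\delta_0}\psi + \phi$ where $\phi\perp\psi$. Since $E_1$ is $\kappa$-simple, the spectral projection $P = P_{[E-\kappa/2,E+\kappa/2]}(\widehat{H}^{\Lambda_R(0)}_{\lambda,\alpha,x})$ has rank one and equals $\spr{\cdot}{\psi}\psi$, and the complementary part of $\widehat{H}^{\Lambda_R(0)}_{\lambda,\alpha,x}-E_1$ is invertible on $\mathrm{ran}(\id-P)$ with norm bounded by $2/\kappa$ (using that $|E-E_1|\leq\lambda\|T\|<\kappa/2$, so the rest of the spectrum is at distance at least $\kappa/2$ from $E_1$, hence at least, say, $\kappa/4$ after accounting for constants — I would just track the constant to land exactly on $2\lambda\|T\|/\kappa$).

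Next I would estimate $\|\phi\|$. Writing $(\widehat{H}^{\Lambda_R(0)}_{\lambda,\alpha,x}-E_1)\delta_0 = (\widehat{H}^{\Lambda_R(0)}_{\lambda,\alpha,x}-E)\delta_0 + (E-E_1)\delta_0$, both terms have norm at most $\lambda\|T\|$, so $\|(\widehat{H}^{\Lambda_R(0)}_{\lambda,\alpha,x}-E_1)\delta_0\|\leq 2\lambda\|T\|$. Applying $\id-P$ and inverting on its range gives $\|\phi\| = \|(\id-P)\delta_0\| \leq \frac{2}{\kappa}\cdot\|(\id-P)(\widehat{H}^{\Lambda_R(0)}_{\lambda,\alpha,x}-E_1)\delta_0\|$; but actually a cleaner route is to note $(\id-P)(\widehat{H}^{\Lambda_R(0)}_{\lambda,\alpha,x}-E_1)$ restricted to $\mathrm{ran}(\id-P)$ is invertible with norm $\leq 4/\kappa$, and $(\id-P)\delta_0 = [(\id-P)(\widehat{H}^{\Lambda_R(0)}_{\lambda,\alpha,x}-E_1)|_{\mathrm{ran}(\id-P)}]^{-1}(\id-P)(\widehat{H}^{\Lambda_R(0)}_{\lambda,\alpha,x}-E_1)\delta_0$, whence $\|(\id-P)\delta_0\| \leq \frac{4}{\kappa}\cdot 2\lambda\|T\| $. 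I'd then re-examine the constants — by using the original estimate $\|(\widehat H -E)\delta_0\|\le\lambda\|T\|$ directly against $E$ rather than $E_1$, with $\mathrm{dist}(E,\sigma\setminus\{E_1\})\ge\kappa/2 - \lambda\|T\|$, and choosing $\lambda$ small enough that this is $\ge\kappa/4$ or simply keeping $\lambda<\kappa/(2\|T\|)$ as assumed so the gap is $\ge\kappa/2$ minus a controlled error — to arrive at the stated bound $\|(\id-P)\delta_0\|\leq \frac{2\lambda\|T\|}{\kappa}$ (some constant-chasing here, but it is routine).

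Finally, once $\|(\id-P)\delta_0\| = \|\phi\|$ is controlled, I would conclude: $|\spr{\psi}{\delta_0}|^2 = 1-\|\phi\|^2$, so $|\spr{\psi}{\delta_0}|\geq 1-\|\phi\|^2 \geq 1 - \|\phi\|$, and after adjusting the phase of $\psi$ so that $\spr{\psi}{\delta_0}\geq 0$, we get $\|\psi-\delta_0\|^2 = 2 - 2\spr{\psi}{\delta_0} \leq 2\|\phi\|^2$... which would give $\sqrt 2\|\phi\|$; to hit $\frac{2\lambda\|T\|}{\kappa}$ on the nose one instead uses $\|\psi-\delta_0\| = \|\psi - \spr{\psi}{\delta_0}\psi - \phi\| \leq |1-\spr{\psi}{\delta_0}| + \|\phi\| \leq \|\phi\|^2 + \|\phi\|$, and then absorbs the quadratic term by the smallness of $\lambda$ (e.g. $\|\phi\|\leq\lambda\|T\|/\kappa$ from the sharper gap, doubling to cover the correction). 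The main obstacle is purely bookkeeping of the constants so that the final bound reads exactly $\frac{2\lambda\|T\|}{\kappa}$; the conceptual content — orthogonal decomposition plus invertibility of $\widehat H - E_1$ off the one-dimensional eigenspace, which is exactly what $\kappa$-simplicity buys — is immediate.
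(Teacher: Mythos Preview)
Your approach is correct in spirit and would succeed after the constant bookkeeping you flag, but it is genuinely different from the paper's argument. The paper does not use any spectral decomposition of $\delta_0$ at all: instead it works directly with the eigenvalue equation for $\psi$. Writing $(\widehat{H}^{\Lambda_R(0)}_{\lambda,\alpha,x}-E_1)\psi = 0$ as $(W_{x,\alpha}(n)-E_1)\psi(n) = -\lambda(T\psi)(n)$ and using that $|W_{x,\alpha}(n)-E_1|\gtrsim\kappa$ for every $n\neq 0$ (because $W$ is diagonal and $E$ was $\kappa$-simple for $W$), one gets immediately $\sum_{n\neq 0}|\psi(n)|^2 \lesssim (\lambda\|T\|/\kappa)^2$, and hence $|\psi(0)|$ is close to $1$.

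The difference is that the paper exploits the explicit diagonal-plus-small structure $\widehat{H}=W+\lambda T$: the gap condition lives at the level of the \emph{unperturbed} diagonal entries $W(n)$, and no resolvent or spectral projection of $\widehat{H}$ is ever invoked. This makes the argument shorter and the constants cleaner. Your route, by contrast, uses only the abstract datum that $E_1$ is $\frac{\kappa}{2}$-simple for the \emph{full} operator $\widehat{H}^{\Lambda_R(0)}$, together with the approximate eigenvector estimate for $\delta_0$; it would work verbatim for any self-adjoint operator with a simple eigenvalue and a good trial vector, but at the price of the constant-chasing you noticed (the gap to the rest of the spectrum is only $\kappa/2 - \lambda\|T\|$ from $E_1$, so one picks up extra factors). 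Both arrive at the same qualitative bound; the paper's pointwise argument is just the more direct one in this concrete setting.
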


\begin{proof}
 We have $0 = (\widehat{H}^{\Lambda_R(0)}_{\lambda,\alpha,x} - E_1) \psi(n)
 = (W_{x,\alpha}(n) - E_1) \psi(n) + \lambda (T\psi)(n)$. Thus, we obtain
 for $n\neq 0$ that
 \[
  \kappa \sum_{n\in\Lambda_R(0)\setminus\{0\}} |\psi(n)|^2 \leq \lambda \|T\|.
 \]
 This implies $|\psi(0)|^2=1-\sum_{n\in\Lambda_R(0)\setminus\{0\}} |\psi(n)|^2
 \geq 1-\frac{\lambda\|T\|}{\kappa}$.
\end{proof}

This is all we need to prove Proposition~\ref{prop:init}.

%
%
%

\section{Understanding eigenfunctions}
\label{sec:efcontrol}

In order to begin this section, let me review the assumptions about the
operator $H=\lambda T+W:\ell^2(\Z^d)\to\ell^2(\Z^d)$, where $\lambda>0$
is a small parameter.
We will assume that $T$ is of the form
\be
 T\psi(n)=\sum_{k\in\Z^{d}\setminus\{0\}} t_{n,k} \psi(n+k)
\ee
where we have that $|t_{n,k}| \leq \E^{-\eta |k|_{\infty}}$. This is slightly
more general than what we need as we have $t_{n,k}=\hat{f}(k)$. However,
this greater generality doesn't add any new difficulty.
$W$ is the multiplication operator by a sequence $W\in\ell^{\infty}(\Z^d)$.
For $\Lambda\subseteq\Z^{d}$, we have that
\be
 T^{\Lambda}\psi(n)=\mathop{\sum_{k\in\Z^{d}\setminus\{0\}}}_{n+k\in\Lambda} t_{n,k} \psi(n+k)
\ee
Finally, we recall that for $n,m\in\Lambda$ and $E\in\R$,
the Green's function is defined by
\be
 G^{\Lambda}(E;n,m)=\spr{\delta_n}{(H^{\Lambda}-E)^{-1}\delta_m}.
\ee
Here $H^{\Lambda}$ denotes the restriction of $H$ to $\ell^2(\Lambda)$.
We will now need to prove the following lemma, which allows
us to estimate solutions in terms of the Green's function.

\begin{lemma}
 Let $\Lambda\subseteq\Xi\subseteq\Z^d$ and
 $\psi$ solve $H^{\Xi}\psi=E\psi$. Then
 for $n\in\Lambda$, we have
 \be
  \psi(n)=-\lambda\sum_{m\in\Lambda} G^{\Lambda}(E; n,m)
   \sum_{\ell\in\Xi\setminus\Lambda}t_{m,\ell-m} \psi(\ell).
 \ee
\end{lemma}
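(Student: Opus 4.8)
The plan is to use the resolvent identity relating $H^{\Xi}$ and $H^{\Lambda}$. Since $\psi$ solves $H^{\Xi}\psi = E\psi$ on $\Xi$, restricting attention to $\Lambda \subseteq \Xi$, I would write $(H^{\Lambda} - E)\psi|_{\Lambda}$ and observe that this equals the ``boundary terms'' coming from the hopping of $H$ across $\partial\Lambda$. Concretely, for $n \in \Lambda$,
\[
 ((H^{\Xi} - E)\psi)(n) = ((H^{\Lambda} - E)\psi|_{\Lambda})(n) + \lambda\sum_{\ell \in \Xi\setminus\Lambda} t_{n,\ell-n}\psi(\ell),
\]
because the potential term $W(n)\psi(n)$ is the same in both operators and the only difference in the hopping term $\lambda T$ is that $H^{\Lambda}$ drops all the contributions $t_{n,k}\psi(n+k)$ with $n+k \notin \Lambda$. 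Since the left-hand side vanishes for $n \in \Lambda$, we get
\[
 ((H^{\Lambda} - E)\psi|_{\Lambda})(n) = -\lambda\sum_{\ell \in \Xi\setminus\Lambda} t_{n,\ell-n}\psi(\ell) =: \rho(n).
\]

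Next I would apply $(H^{\Lambda} - E)^{-1}$, which requires that $E \notin \sigma(H^{\Lambda})$; this is implicit in the statement since the Green's function $G^{\Lambda}(E;n,m)$ is assumed to be defined. Then for $n \in \Lambda$,
\[
 \psi(n) = \sum_{m \in \Lambda} G^{\Lambda}(E;n,m)\,\rho(m) = -\lambda\sum_{m\in\Lambda} G^{\Lambda}(E;n,m)\sum_{\ell\in\Xi\setminus\Lambda} t_{m,\ell-m}\psi(\ell),
\]
which is exactly the claimed formula. One should note that the sum over $k \in \Z^d\setminus\{0\}$ defining $T$ means the diagonal term $t_{m,0}$ does not appear, consistent with $\ell \neq m$ in the inner sum (indeed $\ell \in \Xi\setminus\Lambda$ while $m \in \Lambda$, so automatically $\ell \neq m$).

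The only genuine point requiring a word of care is the interchange of the infinite sums, i.e. justifying that $\rho \in \ell^2(\Lambda)$ (or at least that the double sum converges absolutely so that applying $(H^{\Lambda}-E)^{-1}$ termwise is legitimate). This follows from $\psi \in \ell^2(\Xi)$ together with the exponential decay $|t_{m,k}| \leq \E^{-\eta|k|_{\infty}}$: the map $\ell \mapsto t_{m,\ell-m}$ is summable uniformly in $m$, so $\rho$ is a bounded convergent sum of $\ell^2$-dilates and hence lies in $\ell^2(\Lambda)$ (in fact in $\ell^1$), after which $(H^{\Lambda}-E)^{-1}$ — a bounded operator on $\ell^2(\Lambda)$ — may be applied directly and its matrix representation in the $\delta_n$ basis gives the stated kernel $G^{\Lambda}(E;n,m)$. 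I do not expect any real obstacle here; the lemma is a bookkeeping identity and the decay of $T$ makes all manipulations absolutely convergent.
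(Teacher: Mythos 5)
Your argument is correct and is essentially the same as the paper's: both decompose $(H^{\Lambda}-E)\psi|_{\Lambda} = \lambda(T^{\Lambda}-T^{\Xi})\psi$ using that $H^{\Xi}\psi = E\psi$, and then invert $(H^{\Lambda}-E)$; the paper phrases the inversion via the inner product $\psi(n) = \langle (H^{\Lambda}-E)^{-1}\delta_n, (H^{\Lambda}-E)\psi\rangle$ rather than applying the resolvent directly, but this is the same computation. Your closing remark on absolute convergence of the boundary sum is a reasonable point of care that the paper leaves implicit.
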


\begin{proof}
 We have that $\psi(n)=\spr{\delta_n}{\psi}=\spr{(H^{\Lambda}-E)^{-1}\delta_n}
 {(H^{\Lambda}-E)\psi}$. As $(H^{\Lambda}-E)\psi= \chi_{\Lambda}(H^{\Lambda}-E)\psi$
 and $(H^{\Lambda}-E)\psi=(H^{\Lambda}-E - (H^{\Xi} - E))\psi=\lambda(T^{\Lambda}-T^{\Xi})\psi$,
 we obtain
 \[
  \psi(n)=\lambda\spr{(H^{\Lambda}-E)^{-1} \delta_n}{\chi_{\Lambda}(T^{\Lambda}-T^{\Xi})\psi}.
 \]
 For $m\in\Lambda$, we have that
 \[
  \chi_{\Lambda}(T^{\Lambda}-T^{\Xi})\psi (m) =  -
   \mathop{\sum_{k\in\Z^{d}\setminus\{0\}}}_{m+k\in\Xi\setminus\Lambda} t_{m,k} \psi(m+k) =
    \sum_{\ell\in\Xi\setminus\Lambda} t_{m,\ell-m} \psi(\ell).
 \]
 The claim follows.
\end{proof}

This lemma together with our decay assumption on $t_{n,m}$ implies
\be\label{eq:psiesti1}
|\psi(n)|\leq\lambda\sum_{m\in\Lambda}|G^{\Lambda}(E;n,m)|
  \sum_{\ell\in\Xi\setminus\Lambda}
   \E^{-\eta |\ell-m|_{\infty}} \cdot |\psi(\ell)|.   
\ee
In particular, we obtain

\begin{lemma}\label{lem:psiesti2}
 Let $\gamma\in (0,\frac{\eta}{4})$ and $R\geq 1$ large enough (depending on $\gamma,\eta, d,\tau$).

 Let $\Lambda_R(0)$ be $(\gamma,\tau)$-suitable for $H - E$ and 
 $\psi$ solve $H^{\Xi}\psi=E\psi$ for $\Lambda_R(0)\subseteq \Xi$, $\|\psi\|=1$.
 Then for $|n|\in\Lambda_{\frac{R}{4}}(0)$,
 \be
 |\psi(n)|\leq\lambda\E^{-\frac{\gamma}{2} \dist(n,\partial\Lambda_R(0))} 
   \cdot \max_{m\in\Xi}\left(\E^{-\frac{\eta}{2} \dist(m,\Lambda_R(0))} |\psi(m)|\right).
 \ee
\end{lemma}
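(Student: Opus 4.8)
The plan is to iterate the identity \eqref{eq:psiesti1}, using the two properties of $(\gamma,\tau)$-suitability to convert each application into a genuine gain of exponential decay. First I would fix $n\in\Lambda_{R/4}(0)$ and split the inner sum over $\ell\in\Xi\setminus\Lambda_R(0)$ according to whether $|\ell-m|_\infty$ is small or large relative to $\dist(m,\partial\Lambda_R(0))$. Since $n$ is deep inside the cube, for $m$ near $n$ the factor $\E^{-\eta|\ell-m|_\infty}$ already forces $|\ell-m|_\infty\gtrsim R$, which produces an $\E^{-cR}$ gain; the work is to show that the Green's-function weight $\sum_m|G^{\Lambda_R(0)}(E;n,m)|$, controlled via property (ii) for $|n-m|\geq R/2$ and via the crude operator-norm bound $\|(\widehat H^{\Lambda_R(0)}-E)^{-1}\|\leq \E^{R^\tau}$ from property (i) in the complementary region, does not destroy this gain. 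The point is that $\tau<1$, so $\E^{R^\tau}$ is beaten by any factor $\E^{-cR}$ once $R$ is large.

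Concretely I would estimate $\sum_{m\in\Lambda_R(0)}|G^{\Lambda_R(0)}(E;n,m)|\E^{-\frac{\eta}{2}|\ell-m|_\infty}$ for each fixed $\ell\notin\Lambda_R(0)$ by breaking the $m$-sum at $|n-m|_\infty = R/2$. On $|n-m|_\infty\geq R/2$ we use $|G^{\Lambda_R(0)}(E;n,m)|\leq \E^{-\gamma|n-m|_\infty}$; on $|n-m|_\infty<R/2$ we bound $|G^{\Lambda_R(0)}(E;n,m)|\leq \E^{R^\tau}$ but then $m$ lies within $\Lambda_{3R/4}(n)$, so since $\ell\notin\Lambda_R(0)$ and $n\in\Lambda_{R/4}(0)$ one has $|\ell-m|_\infty\geq \dist(\ell,\Lambda_{3R/4}(n))\gtrsim R/2$, and $\E^{R^\tau}\E^{-\frac{\eta}{2}\cdot\frac{R}{2}}\leq \E^{-\frac{\eta R}{8}}$ for $R$ large. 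Combining the two regions, and separating off the $\E^{-\frac\eta2 \dist(m,\Lambda_R(0))}$ piece that is to appear in the final maximum, one obtains a bound of the shape
\[
|\psi(n)|\leq \lambda\,\E^{-\frac{\gamma}{2}\dist(n,\partial\Lambda_R(0))}\cdot C(d,\eta,\gamma)\,\max_{m\in\Xi}\left(\E^{-\frac{\eta}{2}\dist(m,\Lambda_R(0))}|\psi(m)|\right),
\]
where the constant $C$ is absorbed by taking $R$ large (it is only polynomial in $R$ from counting lattice points, so it is killed by an arbitrarily small fraction of the exponent $\frac{\gamma}{2}\dist(n,\partial\Lambda_R(0))$, noting $\dist(n,\partial\Lambda_R(0))\geq 3R/4$ for $n\in\Lambda_{R/4}(0)$). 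The condition $\gamma<\eta/4$ guarantees that after halving exponents and paying the polynomial overhead we still land exactly on the stated exponents $\frac{\gamma}{2}$ and $\frac{\eta}{2}$.

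The main obstacle is the region where the off-diagonal Green's-function decay in property (ii) is \emph{not} available, namely $|n-m|_\infty<R/2$, where only the exponentially large norm bound $\E^{R^\tau}$ holds; the whole argument hinges on showing that in exactly that region the hopping factor $\E^{-\eta|\ell-m|_\infty}$ is forced to be at least as small as $\E^{-cR}$ with $c$ a fixed constant, which uses crucially that $n$ is restricted to the smaller cube $\Lambda_{R/4}(0)$ (so that $\Lambda_{R/2}(n)\subseteq\Lambda_{3R/4}(0)$ stays well inside $\Lambda_R(0)$, keeping $\ell$ far from $m$). Everything else is summation of geometric-type series over $\Z^d$ and bookkeeping of constants, which I would not carry out in detail.
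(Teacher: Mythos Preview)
Your approach is correct and essentially identical to the paper's: a single application of \eqref{eq:psiesti1}, followed by splitting the $m$-sum into the region where the off-diagonal Green's function decay (ii) applies and the region where only the norm bound (i) is available, with the latter compensated by the forced largeness of $|\ell-m|_\infty$. The paper splits at $m\in\Lambda_{R/2}(0)$ rather than at $|n-m|_\infty=R/2$, but for $n\in\Lambda_{R/4}(0)$ these are equivalent up to harmless constants (your lower bound on $|\ell-m|_\infty$ in the inner region should be $R/4$ rather than $R/2$, but this changes nothing).
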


\begin{proof}
 Let $\Psi=\max_{m\in\Xi}\left(\E^{-\frac{\eta}{2} \dist(m,\Lambda_R(0))} |\psi(m)|\right)$. 
 We begin by observing that \eqref{eq:psiesti1} implies
 \[
 |\psi(n)|\leq\left(\sum_{m\in\Lambda} |G^{\Lambda}(E;n,m)|
   \sum_{\ell\in\Xi\setminus\Lambda} \E^{-\frac{\eta}{2} |\ell-m|_{\infty}}
    \right) \cdot \Psi.
 \]
 Thus we have $|\psi(n)|\leq\lambda(I_1+I_2) \Psi$,
 where
 \[
  I_1=\sum_{m\in\Lambda_{R}(0)\setminus\Lambda_{R/2}(0)} |G^{\Lambda}(E;n,m)|
   \sum_{\ell\in\Xi\setminus\Lambda_{R}(0)}
    \E^{-\frac{\eta}{2} |\ell-m|_{\infty}}
 \]
 and
 \[
  I_2=\sum_{m\in\Lambda_{R/2}(0)}  |G^{\Lambda}(E;n,m)|
   \sum_{\ell\in\Xi\setminus\Lambda_{R}(0)} 
    \E^{-\frac{\eta}{2} |\ell-m|_{\infty}}.
 \]
 To estimate $I_1$, we have that $|G^{\Lambda}(E;n,m)|\leq \E^{-\gamma |n-m|}$.
 Furthermore, we have that $|\ell-m|_{\infty}\geq (|\ell|_{\infty} -R)
 + (R-|m|_{\infty})$. From this, we conclude
 \begin{align*}
  I_1 &\leq C_1 \cdot\sum_{m\in\Lambda_{R}(0)\setminus\Lambda_{R/2}(0)} \E^{-\gamma |n-m|_{\infty}
   - \frac{\eta}{2} (R - |m|_{\infty})}\\
  &\leq C_2 R^d \E^{-\min(\gamma,\frac{\eta}{2}) \dist(n,\partial\Lambda_{R}(0))}.
 \end{align*}
 To estimate $I_2$, we use that $|G^{\Lambda}(E;n,m)|\leq\E^{\Gamma}$
 and $|\ell-m|\geq\frac{R}{2}+(|\ell|_{\infty}-R)$. From this we conclude
 \[
  I_2\leq C \E^{-\frac{R}{2} \cdot \frac{\eta}{2} + \Gamma}.
 \]
 The claim  follows.
\end{proof}

We will furthermore need the following lemma which allows
us to construct test functions by restricting them.

\begin{lemma}\label{lem:testfunc}
 Let $\|\psi\|=1$ solve $H^{\Xi}\psi=E\psi$ and assume
 \be
  |\psi(n)|\leq \delta,\quad n\in\Lambda_{2R}(0)\setminus\Lambda_R(0)
 \ee
 for $\delta\geq \E^{-\frac{1}{10} \eta R}$.
 Then $\varphi = \chi_{\Lambda_{\frac{3}{2} R}(0)} \psi$ satisfies
 \be
  \|(H^{\Xi} - E) \varphi\| \leq (10R)^{2d} \delta.
 \ee
\end{lemma}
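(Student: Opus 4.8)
The plan is to exploit that, writing $\Lambda:=\Lambda_{3R/2}(0)$, the operator $H^{\Xi}-E$ differs from its truncation to $\Lambda$ only through the hopping part: the multiplication operator $W-E$ commutes with $\chi_{\Lambda}$, and $(H^{\Xi}-E)\psi=0$, so
\[
 (H^{\Xi}-E)\varphi=(H^{\Xi}-E)\chi_{\Lambda}\psi=\chi_{\Lambda}(H^{\Xi}-E)\psi+[\lambda T^{\Xi},\chi_{\Lambda}]\psi=[\lambda T^{\Xi},\chi_{\Lambda}]\psi .
\]
The commutator has kernel $\lambda\,t_{n,k}\bigl(\chi_{\Lambda}(n+k)-\chi_{\Lambda}(n)\bigr)$, hence
\[
 \bigl((H^{\Xi}-E)\varphi\bigr)(n)=\lambda\sum_{k}t_{n,k}\bigl(\chi_{\Lambda}(n+k)-\chi_{\Lambda}(n)\bigr)\psi(n+k),
\]
and only pairs $(n,n+k)$ with exactly one of $n,\,n+k$ inside $\Lambda$ contribute.

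First I would prove a pointwise estimate, distinguishing $n\in\Lambda$ from $n\notin\Lambda$. For $n\in\Lambda$ the sum runs over $k$ with $n+k\notin\Lambda$. Split these: if $|n+k|_{\infty}\le 2R$ then $n+k$ lies in the shell $\Lambda_{2R}(0)\setminus\Lambda_R(0)$, so $|\psi(n+k)|\le\delta$, and since $\sum_{k}|t_{n,k}|\le\sum_{k}\E^{-\eta|k|_{\infty}}=:C_{0}(\eta)<\infty$ this part contributes $\le\lambda\delta\,C_{0}(\eta)$; if $|n+k|_{\infty}>2R$ then $|k|_{\infty}>R/2$, and $\sum_{|k|_{\infty}>R/2}\E^{-\eta|k|_{\infty}}\le\E^{-\eta R/4}C_{0}(\eta/2)\le\delta\,C_{0}(\eta/2)$ by the hypothesis $\delta\ge\E^{-\eta R/10}$, so this part contributes $\le\lambda\delta\,C_{0}(\eta/2)$. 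Thus $|((H^{\Xi}-E)\varphi)(n)|\le C\delta$ on $\Lambda$ with an $R$-independent $C$, and $\#\Lambda\le(3R+1)^{d}$. For $n\notin\Lambda$ set $\rho=\rho(n):=|n|_{\infty}-\tfrac32R>0$; if $n+k\in\Lambda$ then $|k|_{\infty}\ge\rho$, and if moreover $n+k\in\Lambda_{R}(0)$ then $|k|_{\infty}\ge\tfrac R2+\rho$, giving $|t_{n,k}|\le\E^{-\eta R/2}\E^{-\eta\rho}\le\delta\,\E^{-\eta\rho}$. Running the same two-region split yields $|((H^{\Xi}-E)\varphi)(n)|\le C'R^{d}\,\delta\,\E^{-\eta\rho/2}$ with $C'$ independent of $R$.

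Finally I would sum the squares. The contribution of $n\in\Lambda$ is $\le(3R+1)^{d}C^{2}\delta^{2}$. For $n\notin\Lambda$, bounding the shell $\{j-1<\rho(n)\le j\}$ by $(3R+2j+1)^{d}$ points and using $\sum_{j\ge1}(3R+2j+1)^{d}\E^{-\eta(j-1)}\le(3R+1)^{d}K_{d}(\eta)$ with $K_{d}(\eta)<\infty$ independent of $R$, the contribution is $\le C'^{2}R^{2d}(3R+1)^{d}K_{d}(\eta)\delta^{2}$. Hence $\|(H^{\Xi}-E)\varphi\|\le C''R^{3d/2}\delta$, which is $\le(10R)^{2d}\delta$ once $R$ is large, since $2d>\tfrac32 d$. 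The only real work is this bookkeeping: checking that $\E^{-\eta R/10}\le\delta$ swallows every ``far'' hopping tail and that all the polynomial-in-$R$ factors stay under the (deliberately generous) bound $(10R)^{2d}$; there is no conceptual obstacle.
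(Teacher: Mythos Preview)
Your argument is correct and follows essentially the same route as the paper: both compute $(H^{\Xi}-E)\varphi$ as the boundary hopping term (the paper writes it out directly, you phrase it as the commutator $[\lambda T^{\Xi},\chi_{\Lambda}]\psi$), then split into the shell $\Lambda_{2R}(0)\setminus\Lambda_R(0)$ where $|\psi|\le\delta$ and the far region where the exponential decay of $t_{n,k}$ together with $\delta\ge\E^{-\eta R/10}$ absorbs the tail. Your treatment of $n\notin\Lambda$ is in fact more explicit than the paper's ``can be estimated as the previous one'': you track the decay $\E^{-\eta\rho/2}$ in $\rho=|n|_{\infty}-\tfrac{3}{2}R$ to ensure $\ell^{2}$-summability over the unbounded exterior, which the paper leaves to the reader.
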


\begin{proof}
 Let $n\in\Xi$, then we have that
 \[
 (H^{\Xi} - E)\varphi(n)=\lambda
  \mathop{\sum_{k\in\Z^{d}\setminus\{0\}}}_{n+k\in\Lambda_{\frac{3}{2}R}(0)}
  t_{n,k} \varphi(n+k)+(W(n)-E)\varphi(n).
 \]
 For $n\in\Lambda_{\frac{3}{2}R}(0)$, this is equal to
 \[
  (H^{\Xi} - E)\varphi(n)=\lambda
   \mathop{\sum_{k\in\Z^{d}\setminus\{0\}}}
   _{n+k\in\Xi\setminus\Lambda_{\frac{3}{2}R}(0)}  t_{n,k} \psi(n+k).
 \]
 Thus, we may estimate
 \begin{align*}
  (H^{\Xi} - E)\varphi(n)&\leq\lambda\left(\delta
   \mathop{\sum_{k\in\Z^{d}\setminus\{0\}}}
   _{n+k\in\Lambda_{2R}(0)\setminus\Lambda_{\frac{3}{2}R}(0)}  t_{n,k}+
     \mathop{\sum_{k\in\Z^{d}\setminus\{0\}}}
     _{n+k\in\Xi\setminus\Lambda_{2 R}(0)}  t_{n,k}\right)\\
       &\leq \lambda\delta(4R)^d+\lambda\sum_{|k|_{\infty}\geq\frac{1}{2} R} \E^{-\eta |k|_{\infty}},
 \end{align*}
 which clearly satisfies what we need.
 For $n\in\Xi\setminus\Lambda_{\frac{3}{2}R}(0)$, one has
 \[
 (H^{\Xi} - E)\varphi(n)=\lambda
  \mathop{\sum_{k\in\Z^{d}\setminus\{0\}}}_{n+k\in\Lambda_{\frac{3}{2}R}(0)}
  t_{n,k} \varphi(n+k).
 \]
 This sum can be estimated as the previous one and the claim follows.
\end{proof}

With these preparations done, we now begin the actual proof
of Theorem~\ref{thm:evexten}.

\begin{lemma}
 Let $1000 \rho < r< R\leq\E^{\rho^{\frac{1}{2}\tau}}$
 and $\rho$ be large enough. Assume that for $\frac{r}{2}\leq |n|_{\infty}\leq R$,
 we have
 \be
  \Lambda_{\rho}(n)\text{ is $(\gamma,\tau)$-suitable for } H-E.
 \ee
 Then we have that
 \be
  \|(H^{\Lambda_{R}(0)\setminus\Lambda_{\frac{r}{2}}(0)}-E)^{-1}\| \leq \E^{5\rho^{\tau}},
   \quad
    \|(H^{\Lambda_{r}(0)\setminus\Lambda_{\frac{r}{2}}(0)}-E)^{-1}\| \leq \E^{5\rho^{\tau}}.
 \ee
\end{lemma}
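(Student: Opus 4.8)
The plan is to establish the first bound; the second one (for the thinner annulus $\Lambda_r(0)\setminus\Lambda_{r/2}(0)$) is proved identically, all the sub‑boxes needed being again covered by the hypothesis since $[r/2,r]\subseteq[r/2,R]$ and $r/2>500\rho$ gives the annulus enough thickness. Write $\Lambda:=\Lambda_R(0)\setminus\Lambda_{r/2}(0)$. I would build an approximate inverse of $H^{\Lambda}-E$ by patching together the local resolvents of the suitable boxes furnished by the hypothesis — the standard resolvent‑patching construction.

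First I would attach to each $m\in\Lambda$ a box $\Omega_m:=\Lambda_\rho(c_m)$ that is $(\gamma,\tau)$‑suitable for $H-E$; this only requires $r/2\le|c_m|_\infty\le R$, so one may take $c_m=m$ whenever $\Lambda_\rho(m)\subseteq\Lambda$, and when $m$ lies within distance $\rho$ of one of the two faces $\{|x|_\infty=r/2\}$, $\{|x|_\infty=R\}$ of $\Lambda$, push $c_m$ toward that face, keeping $\dist(m,\partial\Omega_m)\ge\rho/2$ and arranging that $\Omega_m$ leaves $\Lambda$, if at all, only across the one nearby face. Put $u_m:=(H^{\Omega_m}-E)^{-1}\delta_m$. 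Suitability (i) gives $\|u_m\|\le\E^{\rho^\tau}$, and suitability (ii) gives $|u_m(\ell)|=|G^{\Omega_m}(E;\ell,m)|\le\E^{-\gamma|\ell-m|_\infty}$ whenever $|\ell-m|_\infty\ge\rho/2$. Define $S\colon\ell^2(\Lambda)\to\ell^2(\Lambda)$ by $S\delta_m=\chi_\Lambda u_m$. Since the boxes $\Omega_m$ overlap with multiplicity at most a power of $\rho$, a Schur test gives $\|S\|\le C\rho^{d/2}\E^{\rho^\tau}\le\E^{2\rho^\tau}$ for $\rho$ large; the assumption $R\le\E^{\rho^{\tau/2}}$ enters only through volume bounds such as $|\Lambda|\le(2R+1)^d\le\E^{d\rho^{\tau/2}}$, negligible against $\E^{\rho^\tau}$.

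Next I would expand $(H^\Lambda-E)S=I+T$. Applying the resolvent identity of the first lemma of this section (in the version for $(H^\Lambda-E)^{-1}\delta_m$ rather than for eigenfunctions) one finds, for $n,m\in\Lambda$, that $T(n,m)$ is a finite sum of terms $\pm\lambda\,t_{n,k}\,G^{\Omega_m}(E;n+k,m)$ with $n+k$ lying either just outside $\Omega_m$ or in $\Omega_m\setminus\Lambda$; in particular $T(n,m)=0$ unless $|n-m|_\infty\lesssim\rho$. For $m$ with $\Omega_m\subseteq\Lambda$ (in particular for all $m$ in the bulk) the exit set is empty and the only contributions come from $n+k$ near $\partial\Omega_m$, hence at distance $\ge\rho/2$ from $m$, so that $|G^{\Omega_m}(E;n+k,m)|\le\E^{-\gamma\rho/2}$; combined with $|t_{n,k}|\le\E^{-\eta|k|_\infty}$ and $\gamma<\eta/4$, the corresponding columns of $T$ have $\ell^1$‑norm $\le(C\rho)^{C(d)}\lambda\,\E^{-\gamma\rho/4}$. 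Granting the analogous smallness for the boundary‑layer columns, a Schur test yields $\|T\|\le\tfrac12$ once $\rho$ is large. Then $I+T$ is invertible with $\|(I+T)^{-1}\|\le2$, and since $H^\Lambda-E$ is a self‑adjoint operator on the finite‑dimensional space $\ell^2(\Lambda)$ admitting $S(I+T)^{-1}$ as a right inverse, it is invertible with $(H^\Lambda-E)^{-1}=S(I+T)^{-1}$, whence $\|(H^\Lambda-E)^{-1}\|\le2\|S\|\le2\E^{2\rho^\tau}\le\E^{5\rho^\tau}$.

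The one genuinely delicate point — the step I expect to cost the most work — is the control of the columns $m$ in the layer $\{r/2<|m|_\infty\le r/2+\rho\}$ adjacent to the hole $\Lambda_{r/2}(0)$ (and, symmetrically, near $\{|x|_\infty=R\}$): no box $\Lambda_\rho(c)$ can both contain such an $m$ deep in its interior and avoid the hole, so the exit set $\Omega_m\setminus\Lambda$ comes within distance $<\rho/2$ of $m$ and the crude bound on $T(n,m)$ above is not small. I would handle this using that $H^\Lambda$ carries a Dirichlet condition on $\Lambda_{r/2}(0)$, so that any function propagated by the identity vanishes on the exit set, and by treating these columns separately: first absorb the harmless bulk part of $T$, then close the estimate on the boundary layer by one further application of the resolvent identity directed into the already‑controlled bulk, exploiting that this layer — though of thickness $\rho$ — is a vanishing fraction of $\Lambda$, so that its volume factors are again swallowed by $R\le\E^{\rho^{\tau/2}}$ and by the slack in $\E^{5\rho^\tau}$. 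Everything else is routine patching.
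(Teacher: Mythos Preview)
Your patching construction is a genuinely different route from the paper's. The paper argues by contradiction: if the resolvent bound fails one obtains $\tilde E$ with $|E-\tilde E|\le\E^{-5\rho^\tau}$ and a normalized eigenfunction $\psi$ of $H^{\Lambda}$ at $\tilde E$; locating a peak $|\psi(n)|\ge(3R)^{-d}$, one perturbs the suitable boxes $\Lambda_\rho(m)$ from $E$ to $\tilde E$, applies Lemma~\ref{lem:psiesti2} to force $|\psi(m)|\le\E^{-2\rho^\tau}$ on an annulus around $n$, and then Lemma~\ref{lem:testfunc} turns $\psi\chi_{\Lambda_\rho(n)}$ into a test function contradicting the suitability of $\Lambda_\rho(n)$. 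This needs only one box at a time, and the condition $R\le\E^{\rho^{\tau/2}}$ is used solely to absorb the volume factor $(3R)^{d}$.

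The bulk of your argument is correct, but the boundary-layer step is a real gap, not a routine detail. For $m$ within distance $\rho$ of the inner face (especially near its corners) no center $c_m$ satisfies both $|m-c_m|_\infty\le\rho/2$ and $\Lambda_\rho(c_m)\subseteq\Lambda$, so the error column $T\delta_m$ contains terms $-\lambda\, t_{n,k}\,u_m(n+k)$ with $n+k\in\Omega_m\setminus\Lambda$ at distance $<\rho/2$ from $m$; these are only bounded by $\E^{\rho^\tau}$, not small. Your proposed fixes do not close this. The Dirichlet remark is beside the point: the offending values of $u_m$ on $\Omega_m\setminus\Lambda$ enter $T(n,m)$ for $n\in\Lambda$ through the definition of $u_m$ itself, before any restriction by $H^\Lambda$. ``One further application directed into the bulk'' does not help either, because the large part of $T_\partial\delta_m$ is supported near $\partial\Lambda$ again, so iteration does not leave the boundary layer. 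And the layer being a small volume fraction of $\Lambda$ is irrelevant to an operator-norm bound on $T$. A genuine repair would require either proving a resolvent bound for the truncated boxes $(H^{\Omega_m\cap\Lambda}-E)^{-1}$---which is not a consequence of suitability of $\Omega_m$---or a more elaborate two-scale patching near $\partial\Lambda$. The contradiction argument sidesteps all of this: the peak $n$ lies in $\Lambda$, so $\Lambda_\rho(n)$ is suitable by hypothesis, and the solution-decay estimate tolerates $\Lambda_\rho(m)$ protruding from $\Lambda$ since one may extend $\psi$ by zero and the resulting source near $\partial\Lambda$ is controlled by the exponential hopping decay.
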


\begin{proof}
 As $R\geq r \geq 1000 \rho$, it suffices to prove the first claim.
 Assume by contradiction that the claim fails. Then there exists 
 $|\ti{E}-E|\leq\E^{-5 \rho ^{\tau}}$ and $\|\psi\|=1$ solving
 \[
  H^{\Lambda_{R}(0)\setminus\Lambda_{\frac{r}{2}}(0)}\psi=\ti{E}\psi.
 \] 
 Thus there exists $n$ such that $|\psi(n)|\geq \frac{1}{(3R)^d}$.
 By assumption, we have for $\frac{\rho}{2} \leq |m-n|_{\infty} \leq \frac{3\rho}{2}$ that
 $\Lambda_{\rho}(m)$ is $(\gamma,\tau)$-suitable for $H - E$.
 As $(H-\ti{E})^{-1} = (\mathrm{Id}+ (E-\ti{E}) (H-E)^{-1})^{-1} (H-E)^{-1}$,
 we have that
 \[
  \|(H^{\Lambda_{\rho}(m)} -\ti{E})^{-1}\| \leq 2 \E^{\rho^{\tau}}.
 \]
 Now as 
 \begin{align*}
  |G^{\Lambda_{\rho}(m)}(\ti{E};k,\ell)|&\leq |G^{\Lambda_{\rho}(m)}(E;k,\ell)|\\
   &+ |E-\ti{E}|\cdot\|(H^{\Lambda_{\rho}(m)} -\ti{E})^{-1}\|
   \cdot\|(H^{\Lambda_{\rho}(m)} -E)^{-1}\|
 \end{align*}
 we have that $|G^{\Lambda_{\rho}(m)}(\ti{E};k,\ell)|\leq 2\E^{-3\rho^\tau}$
 for $k\in\Lambda_{\frac{\rho}{10}}(0)$, $\ell\in\Lambda_{\rho}(0)\setminus\Lambda_{\frac{\rho}{3}}(0)$.
 Thus the previous lemma implies that
 \[
 |\psi(m)|\leq \E^{-2 \rho^{\tau}}.
 \]
 $\varphi=\psi\chi_{\Lambda_{\rho}(n)}$ satisfies
 \[
  \|(H^{\Lambda_{\rho}(n)}-\ti{E}) \varphi\|\leq (10R)^{2d}\E^{-2 \rho^{\tau}}, \quad
   \|\varphi\| \geq \frac{1}{(3R)^{d}}.
 \]
 This implies $\|(H^{\Lambda_{\rho}(n)}-\ti{E})^{-1}\|> \frac{1}{(3R)^{2d}} \E^{2\rho^{\tau}}$,
 which is a contradiction..
\end{proof}

Let now $\psi$ be the function from the statement
of Theorem~\ref{thm:evexten}. Define the function
\be
 \psi_1(n)=\begin{cases} \psi(n),& n\in\Lambda_{\frac{3}{4}r}(0)(0);\\
  0,&\text{otherwise}.\end{cases}
\ee

\begin{lemma}
 We have that $\|\psi_1\| \geq \frac{1}{2}$.
 Furthermore for $\Lambda_{\frac{3}{4} r}(0)\subseteq \Xi$, we have
 \be
  \|(H^{\Xi}-E) \psi_1\|\leq \E^{-\frac{\gamma r}{200}}.
 \ee
\end{lemma}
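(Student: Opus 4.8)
The plan is to prove the two assertions separately, using the already-constructed eigenfunction $\psi$ of $H^{\Lambda_r(0)}$ and the decay estimate of Lemma~\ref{lem:psiesti2}.

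First I would establish the lower bound $\|\psi_1\|\geq\frac12$. Since $E$ is a $\delta$-simple eigenvalue of $H^{\Lambda_r(0)}$ with $\delta\geq\E^{-\gamma r/1000}$ and $\|\psi\|=1$, the tail $\sum_{n\in\Lambda_r(0)\setminus\Lambda_{3r/4}(0)}|\psi(n)|^2$ must be small; this is exactly the content one gets from decay of eigenfunctions. The subtlety is that Lemma~\ref{lem:psiesti2} requires a $(\gamma,\tau)$-suitable box, and at this stage we are \emph{not} assuming suitability of any box centered at $0$ — only of the annular boxes $\Lambda_\rho(n)$ with $r/2\leq|n|_\infty\leq R$. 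So the argument must go through the resolvent estimate on the annulus $\Lambda_{r}(0)\setminus\Lambda_{r/2}(0)$ proved in the previous lemma, namely $\|(H^{\Lambda_r(0)\setminus\Lambda_{r/2}(0)}-E)^{-1}\|\leq\E^{5\rho^\tau}$, combined with the decay coming from the suitable annular boxes. Concretely: write the equation $H^{\Lambda_r(0)}\psi=E\psi$ restricted to sites in $\Lambda_r(0)\setminus\Lambda_{r/2}(0)$, treat the coupling to $\Lambda_{r/2}(0)$ as a source term of size $\|T\|\cdot 1$, and iterate using the good Green's function on the annular boxes $\Lambda_\rho(n)$ to propagate decay inward from $\partial\Lambda_r(0)$; this forces $|\psi(n)|\leq\E^{-\gamma r/200}$ (say) for $n$ near $\partial\Lambda_r(0)$ and more generally exponentially small on a neighborhood of the outer boundary, whence $\sum_{n\notin\Lambda_{3r/4}(0)}|\psi(n)|^2$ is exponentially small and $\|\psi_1\|\geq\frac12$.

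Second, for the bound on $\|(H^\Xi-E)\psi_1\|$, I would invoke Lemma~\ref{lem:testfunc} with $R$ there replaced by $\frac r2$: since $\psi_1=\chi_{\Lambda_{3r/4}(0)}\psi$ and, by the decay just established, $|\psi(n)|\leq\E^{-\gamma r/200}$ for $n\in\Lambda_{r}(0)\setminus\Lambda_{r/2}(0)\supseteq\Lambda_{r}(0)\setminus\Lambda_{3r/4}(0)$ (the region where $\chi$ transitions), the hypothesis $\delta\geq\E^{-\frac1{10}\eta\cdot(r/2)}$ of Lemma~\ref{lem:testfunc} is met because $\gamma<\eta/4$ forces $\gamma r/200$ to be comparable to but we need the exponent on the right side, namely $\frac\eta{20}r$, to be \emph{larger} — this works since $\E^{-\gamma r/200}\geq\E^{-\eta r/20}$ is false in general, so more care is needed: one must instead use the full exponential decay $|\psi(n)|\leq\E^{-\gamma\,\dist(n,\partial\Lambda_r(0))/2}$ from the annular iteration, so that on $\Lambda_{r}(0)\setminus\Lambda_{3r/4}(0)$ one has $|\psi(n)|\leq\E^{-\gamma r/16}$ at worst near the inner edge $|n|_\infty=3r/4$, and this \emph{is} $\geq\E^{-\eta r/20}$ once $\gamma$ and $\eta$ are in the allowed range, so Lemma~\ref{lem:testfunc} applies and yields $\|(H^\Xi-E)\psi_1\|\leq(10r)^{2d}\max_{n}|\psi(n)|\leq\E^{-\gamma r/200}$ after absorbing the polynomial factor.

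The main obstacle is the first part: correctly propagating the decay of $\psi$ from the outer boundary $\partial\Lambda_r(0)$ inward using only the suitable annular boxes and the annulus resolvent bound, and bookkeeping the exponents so that (a) $\|\psi_1\|\geq\frac12$ and (b) the boundary values are small enough (at least $\E^{-\eta r/20}$ in magnitude is allowed, but we need them $\leq\E^{-\gamma r/200}$) to feed into Lemma~\ref{lem:testfunc} and to give the claimed error $\E^{-\gamma r/200}$. The constants $200$, $16$, etc., are chosen with room to spare, so the argument is robust, but one has to be careful that the hopping operator $T$ is long-range (exponentially decaying, not finite-range), which is precisely why the estimate \eqref{eq:psiesti1} and Lemma~\ref{lem:psiesti2} were set up to handle the full tail $\sum_{\ell\in\Xi\setminus\Lambda}\E^{-\eta|\ell-m|_\infty}$ rather than just nearest neighbors.
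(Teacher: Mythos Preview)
Your approach for the second assertion (via Lemma~\ref{lem:testfunc}) matches the paper's, and your exponent worry there is unnecessary: since $\gamma<\eta/4$, one has $\gamma/200<\eta/800<\eta/20$, so the hypothesis $\delta\geq\E^{-\eta R/10}$ of Lemma~\ref{lem:testfunc} is satisfied without any detour.

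The gap is in your argument for $\|\psi_1\|\geq\tfrac12$. You propose to show directly that $|\psi(n)|$ is exponentially small in $r$ for \emph{every} $n\in\Lambda_r(0)\setminus\Lambda_{3r/4}(0)$, by iterating Lemma~\ref{lem:psiesti2} on the suitable $\rho$-boxes. But Lemma~\ref{lem:psiesti2} requires $\Lambda_\rho(n)\subseteq\Xi=\Lambda_r(0)$, so for $n$ within distance $\sim\rho$ of $\partial\Lambda_r(0)$ you cannot even start the iteration, and for $n$ only slightly farther in, the iteration chain may drift outward and terminate after $O(1)$ steps, yielding only $|\psi(n)|\lesssim\E^{-c\gamma\rho}$ rather than $\E^{-c\gamma r}$. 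A priori nothing rules out $\psi$ carrying mass of order $1$ in the outer shell $r-O(\rho)\leq|n|_\infty\leq r$. Your alternative route---writing $\psi|_A=(H^A-E)^{-1}f$ with $f$ the inner-boundary source and $A=\Lambda_r(0)\setminus\Lambda_{r/2}(0)$---gives only $\|\psi|_A\|\leq\E^{5\rho^\tau}\|f\|$, which is not small; to extract pointwise decay on the outer part you would need off-diagonal exponential decay of $G^A$, which you have not established and which itself faces the same boundary obstruction.

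The paper sidesteps this cleanly. It proves decay $|\psi(n)|\leq\E^{-\gamma r/100}$ only in the band $\bigl||n|_\infty-\tfrac34 r\bigr|\leq\tfrac{r}{8}$, where the iteration of Lemma~\ref{lem:psiesti2} runs $\gtrsim r/\rho$ times unobstructed. It then applies Lemma~\ref{lem:testfunc} to \emph{both} $\psi_1$ and $\psi_2=\psi-\psi_1$ to obtain $\|(H^\Xi-E)\psi_j\|\leq\E^{-\gamma r/200}$, and argues by contradiction: if $\|\psi_2\|\geq\tfrac12$, then $\psi_2$, which is supported in $\Lambda_r(0)\setminus\Lambda_{3r/4}(0)\subseteq\Lambda_r(0)\setminus\Lambda_{r/2}(0)$, would violate the annulus resolvent bound $\|(H^{\Lambda_r(0)\setminus\Lambda_{r/2}(0)}-E)^{-1}\|\leq\E^{5\rho^\tau}$ from the preceding lemma. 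So the annulus resolvent bound is indeed the key input, as you sensed, but it is used via contradiction on $\psi_2$ rather than by a direct inversion.
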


\begin{proof}
 In order to estimate $\psi(n)$ for $||n|_{\infty}-\frac{3}{4} r|\leq \frac{1}{8} r$,
 we can iterate Lemma~\ref{lem:psiesti2} at least $\frac{1}{32} \frac{r}{\rho}$
 many times to obtain that for these $n$
 \[
 |\psi(n)|\leq\E^{-\frac{\gamma r}{100}}.
 \]
 Clearly either $\psi_1$ or $\psi_2=\psi-\psi_1$ satisfy that
 $\|\psi_j\|\geq\frac{1}{2}$. By Lemma~\ref{lem:testfunc},
 we obtain that
 \[
  \|(H^{\Xi}-E) \psi_j\|\leq\E^{-\frac{\gamma r}{200}}.
 \]
 Thus $\|\psi_2\|\geq \frac{1}{2}$ would contradict the previous lemma.
\end{proof}

From this lemma, it is clear that
\be
 \sigma(H^{\Lambda_R(0)})\cap [E-\E^{-\frac{\gamma r}{250}}, E+\E^{-\frac{\gamma r}{250}}]
 =\{\ti{E}\}
\ee 
for some $\ti{E}$.
We will now show

\begin{lemma}
 Let $\varphi$ satisfy $H^{\Lambda_{R}(0)} \varphi = \lambda\varphi$
 for $\lambda \in [E-\E^{-300 \gamma \rho},E+\E^{- 300 \gamma \rho}]$.
 Then there exists $|c|=1$ such that
 \be
  \|\varphi-c\psi\|\leq \E^{-\frac{\gamma r}{1000}}.
 \ee
\end{lemma}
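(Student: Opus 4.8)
The plan is to compare $\varphi$ against the test function $\psi_1$ (the localized restriction of $\psi$ to $\Lambda_{\frac{3}{4}r}(0)$) constructed in the previous lemma, and then to bootstrap from $\psi_1$ back to $\psi$. First I would expand $c\psi = P_{\{\ti E\}} \psi + (1 - P_{\{\ti E\}})\psi$, where $P_{\{\ti E\}}$ is the rank-one spectral projection of $H^{\Lambda_R(0)}$ onto the eigenspace of $\ti E$ (which exists and is one-dimensional by the displayed spectral gap $\sigma(H^{\Lambda_R(0)}) \cap [E - \E^{-\gamma r/250}, E + \E^{-\gamma r/250}] = \{\ti E\}$). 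Since $\lambda \in [E - \E^{-300\gamma\rho}, E + \E^{-300\gamma\rho}]$ and $300\gamma\rho \geq \gamma r /250$ fails in general — wait, here one must be careful: $\rho \leq r/1000$ so $300\gamma\rho \leq \frac{300}{1000}\gamma r = 0.3\,\gamma r$, which is \emph{larger} than $\gamma r/250$; so $\lambda$ need not lie in the gap. Instead I would argue that $\varphi$, being an eigenfunction with eigenvalue $\lambda$ close to $E$, must be close to the range of $P_{\{\ti E\}}$: more precisely $H^{\Lambda_R(0)}$ restricted to the orthogonal complement of the $\ti E$-eigenspace has spectrum outside $[E - \E^{-\gamma r /250}, E + \E^{-\gamma r/250}]$, hence $\|(1 - P_{\{\ti E\}})\varphi\| \leq \frac{|\lambda - \ti E|}{\E^{-\gamma r/250} - |\lambda - \ti E|}\,\|\varphi\|$. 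One needs $|\lambda - \ti E| \leq |\lambda - E| + |E - \ti E| \leq \E^{-300\gamma\rho} + \E^{-\gamma r/250}$, and since $300\gamma\rho$ and $\gamma r/250$ are both much larger than $\gamma r /1000$, this bound is $\leq \frac{1}{2}\E^{-\gamma r /500}$, say, giving $\|(1 - P_{\{\ti E\}})\varphi\| \leq \E^{-\gamma r /600}$.

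Next I would run the same argument with the roles reversed to control $(1 - P_{\{\ti E\}})\psi$. The point is that $\psi_1$ nearly satisfies the eigenvalue equation: $\|(H^{\Lambda_R(0)} - E)\psi_1\| \leq \E^{-\gamma r/200}$ and $\|\psi_1\| \geq \tfrac12$, while $\psi - \psi_1$ is supported near the boundary of $\Lambda_{\frac34 r}(0)$ and has small norm by the iterated Lemma~\ref{lem:psiesti2} estimate $|\psi(n)| \leq \E^{-\gamma r/100}$ on the transition shell (so $\|\psi - \psi_1\| \leq (3R)^{d/2}\E^{-\gamma r/100} \leq \E^{-\gamma r/150}$, using $R \leq \E^{\rho^{\tau/2}}$ to absorb the polynomial factor). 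Since $\psi_1$ is an approximate eigenfunction of $H^{\Lambda_R(0)}$ with approximate eigenvalue $E$ at scale $\E^{-\gamma r/200}$, and the only spectrum in $[E - \E^{-\gamma r/250}, E + \E^{-\gamma r/250}]$ is $\ti E$, the same resolvent estimate as above yields $\|(1 - P_{\{\ti E\}})\psi_1\| \leq \E^{-\gamma r/300}$, hence $\|(1 - P_{\{\ti E\}})\psi\| \leq \E^{-\gamma r/300} + \|\psi - \psi_1\| \leq \E^{-\gamma r/400}$.

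Now both $\varphi$ and $\psi$ are within $\E^{-\gamma r /600}$ of the one-dimensional space $\mathrm{ran}\,P_{\{\ti E\}}$, and both have norm $1$. Write $\varphi = a\,\Phi + \varphi^\perp$ and $\psi = b\,\Phi + \psi^\perp$ where $\Phi$ is a fixed normalized generator of $\mathrm{ran}\,P_{\{\ti E\}}$, $|a|, |b|$ are within $\E^{-\gamma r/601}$ of $1$ (since $|a|^2 = 1 - \|\varphi^\perp\|^2$, etc.), and $\|\varphi^\perp\|, \|\psi^\perp\| \leq \E^{-\gamma r/600}$. Choosing $c = \bar b / \bar a$ (or rather the unimodular $c$ with $c\,b = a \cdot |b/a|$; one can first replace $\psi$ by $c\psi$ so that $c b$ has the same argument as $a$), we get
\[
 \|\varphi - c\psi\| \leq \big||a| - |b|\big| \cdot \|\Phi\| + \|\varphi^\perp\| + \|\psi^\perp\| \leq 3\,\E^{-\gamma r/601} \leq \E^{-\frac{\gamma r}{1000}},
\]
for $r$ large, which is the desired estimate. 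The main obstacle is purely bookkeeping: verifying that all the exponential scales line up — in particular that $300\gamma\rho$, $\gamma r/100$, $\gamma r/200$, $\gamma r/250$ are all comfortably larger than $\gamma r/1000$ given the hypotheses $1000\rho < r < R \leq \E^{\rho^{\tau/2}}$ and $\delta \geq \E^{-\gamma r/1000}$ — and that the polynomial-in-$R$ losses from restriction and from $\sum_n$ over cubes are swallowed by the exponential gains, which is exactly where $R \leq \E^{\rho^{\tau/2}}$ (hence $\log R \ll \rho^{\tau} \ll r$) is used. No genuinely hard estimate is needed beyond the resolvent/approximate-eigenvector lemma, which is elementary spectral theory.
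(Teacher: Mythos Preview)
Your first step contains a fatal error. You write
\[
 \|(1 - P_{\{\ti E\}})\varphi\| \leq \frac{|\lambda - \ti E|}{\E^{-\gamma r/250} - |\lambda - \ti E|}\,\|\varphi\|,
\]
but $\varphi$ is an \emph{exact} eigenfunction of $H^{\Lambda_R(0)}$ with eigenvalue $\lambda$. Since $H^{\Lambda_R(0)}$ commutes with $P_{\{\ti E\}}$, one has $(H^{\Lambda_R(0)}-\ti E)(1-P_{\{\ti E\}})\varphi = (\lambda-\ti E)(1-P_{\{\ti E\}})\varphi$, so the spectral-gap inequality on the complement gives only the dichotomy: either $|\lambda-\ti E|\geq\E^{-\gamma r/250}$, or $(1-P_{\{\ti E\}})\varphi=0$ (i.e.\ $\lambda=\ti E$). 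There is no intermediate smallness bound. You yourself observed that $\lambda$ need not lie inside the gap $[E-\E^{-\gamma r/250},E+\E^{-\gamma r/250}]$ (since $300\gamma\rho$ can be smaller than $\gamma r/250$), and in that case $\varphi$ is \emph{orthogonal} to the $\ti E$-eigenspace, so your entire comparison with $\mathrm{ran}\,P_{\{\ti E\}}$ collapses. This is not a bookkeeping issue; it is the heart of the lemma: the whole point is to rule out the existence of a second eigenvalue in the wider window, and one cannot assume it away.

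The paper's argument avoids working in $\Lambda_R(0)$ altogether. It first transfers the $(\gamma,\tau)$-suitability of the cubes $\Lambda_\rho(n)$ from energy $E$ to energy $\lambda$ via the resolvent identity (this is where the hypothesis $|\lambda-E|\leq\E^{-300\gamma\rho}$ is used, to beat the a priori resolvent bound $\E^{\rho^\tau}$). With suitability at $\lambda$ in hand, Lemma~\ref{lem:psiesti2} iterated shows that $\varphi$ is exponentially small on $\Lambda_r(0)\setminus\Lambda_{\frac{2}{3}r}(0)$, so the restriction $\varphi_1=\chi_{\Lambda_{\frac{2}{3}r}(0)}\varphi$ is an approximate eigenfunction of $H^{\Lambda_r(0)}$ at energy $E$. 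Now one invokes the $\delta$-simplicity of $E$ for the \emph{small} operator $H^{\Lambda_r(0)}$ --- this is the hypothesis $\delta\geq\E^{-\gamma r/1000}$ of Theorem~\ref{thm:evexten}, which you never used --- to conclude $|\spr{\varphi_1}{\psi}|\geq\|\varphi_1\|-\frac{10}{\delta}\E^{-\gamma r/300}$. The missing idea in your attempt is precisely this passage from $\Lambda_R(0)$ down to $\Lambda_r(0)$ via localization of $\varphi$.
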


\begin{proof}
 For any $n$, $(H^{\Lambda_{\rho}(n)}-\lambda)^{-1} - (H^{\Lambda_{\rho}(n)}-E)^{-1}
 =(E-\lambda)(H^{\Lambda_{\rho}(n)}-\lambda)^{-1} 
 (H^{\Lambda_{\rho}(n)}-E)^{-1}$. Thus, we have that
 \[
  \|(H^{\Lambda_{\rho}(n)}-E)^{-1}\|\leq 2 \E^{\rho^{\tau}}
 \]
 as $|E-\lambda|\cdot \|(H^{\Lambda_{\rho}(n)}-E)^{-1}\|\leq\frac{1}{2}$.
 This implies in particular, that the estimates on the Green's function
 also hold for $\lambda$ up to an neglible factor of $2$.
 Using this, we can show, as in the previous lemma, that 
 $\varphi_1=\varphi\chi_{\Lambda_{\frac{2}{3} r}(0)}$ satisfies
 $\|\varphi_1\|\geq 1-\E^{-\frac{\gamma r}{300}}$ and
 \[
  \|(H^{\Lambda_r(0)}-E)\varphi_1\|\leq\frac{1}{2}\E^{-\frac{\gamma r}{300}}.
 \]
 Letting $\varphi_1=\spr{\varphi_1}{\psi}\psi+\varphi_1^{\perp}$,
 we obtain as $\|(H^{\Lambda_r(0)}-E)\varphi_1^{\perp}\|\geq\delta\|\varphi_{1}^{\perp}\|$
 that
 \[
 |\spr{\varphi_1}{\psi}|\geq \|\varphi_1\|- \frac{10}{\delta} \E^{-\frac{\gamma r}{300}}.
 \]
 The claim now follows by simple arithmetic.
\end{proof}

\begin{proof}[Proof of Theorem~\ref{thm:evexten}]
 The only thing remaining to prove is that
 \[
  \tr(P_{[E-\E^{-300 \gamma \rho},E+\E^{- 300 \gamma \rho}]}(H^{\Lambda_R(0)})) = 1.
 \]
 Assume otherwise, then there would be two orthogonal
 vectors $\varphi_1$ and $\varphi_2$ that satisfy the conclusions
 of the previous lemma. So
 \[
  0=\spr{\varphi_1}{\varphi_2} = c_1 c_2 - \spr{c_1\psi-\varphi_1}{c_2\psi}
  -\spr{\varphi_1}{c_2\psi-\varphi}.
 \]
 As the last 2 terms are $\leq 2 \E^{-\frac{\gamma}{1000} r}$, the claim follows.
\end{proof}

%
%
%

\end{document}